\newtheorem{thm}{Theorem}[section]
\newtheorem{lem}[thm]{Lemma}
\newtheorem{prop}[thm]{Proposition}
\newtheorem{cor}[thm]{Corollary}
\newtheorem{defn}[thm]{Definition}
\newenvironment{cthm}[1]
  {\innercthm}
  {\endinnercthm}
\newcommand{\A}{\mathcal{A}}
\newcommand{\F}{\mathfrak{F}}
\newcommand{\T}{\Psi}
\newcommand{\Q}{\mathbb{Q}}
\newcommand{\R}{\mathbb{R}}
\newcommand{\Z}{\mathbb{Z}}
\newcommand{\C}{\mathbb{C}}
\newcommand{\N}{\mathbb{N}}
\renewcommand{\S}{\mathcal{S}}
\renewcommand{\SS}{\mathbb{S}}
\renewcommand{\Re}{\mathop{\mathrm{Re}}}
\renewcommand{\Im}{\mathop{\mathrm{Im}}}
\newcommand{\RR}{\mathfrak{R}}
\newcommand{\f}{\mathcal{z}}
\newcommand{\K}{\mathfrak{K}}
\newcommand{\NN}{\mathcal{N}}
\renewcommand{\r}{\mathcal{r}}
\newcommand{\CC}{\mathcal{C}}
\begin{document}

\title{Wandering intervals in affine extensions of self-similar interval exchange maps: the cubic Arnoux-Yoccoz map}

\author{Milton Cobo}
\address{Departamento de Matem\'atica, Universidade Federal do Esp\'{\i}rito Santo, Av. Fernando Ferrari 514, Goiabeiras, Vit\'oria, Brasil.} \email{milton.e.cobo@gmail.com}

\author{Rodolfo Guti\'errez-Romo}
\address{Departamento de Ingenier\'{\i}a
Matem\'atica and Centro de Modelamiento Ma\-te\-m\'a\-ti\-co, CNRS-UMI 2807, Universidad de Chile, Beauchef 851, Santiago,
Chile.}\email{rgutierrez@dim.uchile.cl}

\author{Alejandro Maass}
\address{Departamento de Ingenier\'{\i}a
Matem\'atica and Centro de Modelamiento Ma\-te\-m\'a\-ti\-co, CNRS-UMI 2807, Universidad de Chile, Beauchef 851, Santiago,
Chile.}\email{amaass@dim.uchile.cl}

\begin{abstract}
	In this article we provide sufficient conditions on a self-similar interval exchange map, whose renormalization matrix has complex eigenvalues of modulus greater than one, for the existence of affine interval exchange maps with wandering intervals and semi-conjugate with it. These conditions are based on the algebraic properties of the complex eigenvalues and the complex fractals built from the natural substitution emerging from self-similarity. We show that the cubic Arnoux-Yoccoz interval exchange map satisfies these conditions. 
\end{abstract}

\maketitle
\markboth{Milton Cobo, Rodolfo Guti\'errez-Romo, Alejandro
Maass}{Wandering intervals in affine extensions of i.e.m.: the cubic Arnoux-Yoccoz map}

\section{Introduction}\label{sec:introduction}

The existence of wandering intervals in dynamical systems has been studied for a long time. \citeauthor{denjoy} \cite{denjoy} proved that an orientation-preserving $\mathcal{C}^1$-diffeomorphism of the circle with irrational rotation number is conjugate with an irrational rotation if and only if it has no wandering intervals, and constructed examples of $\mathcal{C}^r$-diffeomorphisms of this type with wandering intervals for $r < 2$. The absence of wandering intervals is ensured for $\mathcal{C}^2$-diffeomorphisms.
\smallbreak

By suspending a rotation, one obtains a linear flow on a two-dimensional torus. A natural generalization of these flows are linear flows on surfaces of a higher genus, which when restricted to a Poincar\'e section induce interval exchange maps. In this sense, interval exchange maps are natural generalizations of rotations of the circle.
\smallbreak

A bijective map $T \colon [0, 1) \to [0, 1)$ is said to be an \emph{interval exchange map} (i.e.m.)\ if there exists a finite partition $(I_a;a \in \A)$ of $[0, 1)$ made of intervals such that $T(t) = t + \delta_a$ for each $t \in I_a$ and $a \in \A$. Clearly $T$ is a piecewise isometry of the unit interval exchanging the intervals $(I_a;a \in \A)$. An i.e.m.\ $T$ is said to be \emph{self-similar} if there exists $0 < \alpha < 1$ such that the map $T^{(1)}\colon [0, \alpha) \to [0, \alpha)$ of first return by $T$ to the interval $[0, \alpha)$ is up to rescaling equal to $T$. The natural symbolic extension of a self-similar i.e.m.\ is generated by a substitution.
An {\it affine} interval exchange map (affine i.e.m.)\ $f \colon [0, 1) \to [0, 1)$ is a bijective piecewise affine map with positive slopes. The vector $(\ell_a; a\in\A)$, where $\ell_a$ is the slope of $f$ in the $a$-th interval of continuity, is called \emph{the slope vector} of $f$.
\smallbreak

\citeauthor{levitt} \cite{levitt} constructed an example of a non-uniquely ergodic affine i.e.m.\ with wandering intervals, showing that there exist Denjoy counterexamples of arbitrary smoothness
in some surfaces of genus at least $2$.
Given a slope vector and a self-similar i.e.m., \citeauthor{cam-gut} \cite{cam-gut} provided necessary and sufficient conditions for the existence of an affine i.e.m.\ with the same number of intervals and such slope vector which is semi-conjugate with the self-similar i.e.m. Namely, such map exists if and only if the
logarithm of the given slope vector, $\log\ell=(\log\ell_a;a\in\A)$, is orthogonal to the vector of interval lengths $\lambda= (|I_a|;a\in\A)$. 
The self-similarity of the i.e.m.\ implies that $R \lambda = \alpha^{-1}\lambda$, where $R$ is the renormalization matrix (recall that $[0,\alpha)$ is the interval of renormalization) and $\alpha^{-1} > 1$ is the Perron-Frobenius eigenvalue of $R$. Thus, 
basic linear algebra implies that $\log\ell$ is orthogonal to $\lambda$ if and only if $\log\ell$ belongs to the invariant subspace corresponding to all the eigenvalues of $M=R^t$ different from $\alpha^{-1}$. We remark that $M$ is the matrix associated with the substitution associated with the self-similar i.e.m.
\smallbreak

If $\log\ell$ belongs to the stable space of $M$, \citeauthor{cam-gut} \cite{cam-gut} proved that any semi-conjugate affine i.e.m.\ with this slope vector is in fact conjugate with an i.e.m.\ (i.e.\ has no wandering intervals). The resulting conjugacy is of class $\mathcal{C}^{1+\varepsilon}$, with $\varepsilon>0$ depending on the particular eigenspace \cite{barreto}. They 
also built an example of a uniquely ergodic affine i.e.m.\ with wandering intervals that is strictly semi-conjugate with a self-similar i.e.m. This last example and extensions of the results in \cite{cam-gut} were considered more deeply by \citeauthor{cobo} in \cite{cobo}, where a generalization is also obtained by introducing the Rauzy-Veech-Zorich Oseledets decomposition (see \cite{veech-gauss,zorich-gauss}). 
In the case that $\log\ell$ belongs to the unstable space of $M$, \citeauthor{persistence} proved in \cite{persistence} that if it also lies in the eigenspace associated with a real eigenvalue of modulus strictly greater than one, which is different from the Perron-Frobenius eigenvalue $\alpha^{-1}$ but Galois-conjugate with it, then one can choose a semi-conjugate affine i.e.m.\ with such slope vector and wandering intervals. 
If the given vector of logarithms lies in an eigenspace of $M$ associated with the eigenvalue $1$ or $-1$, or if it lies in an invariant subspace corresponding to a conjugate pair of non real eigenvalues of modulus $1$, \citeauthor{bressaud-deviation} in \cite{bressaud-deviation} proved that any semi-conjugate affine i.e.m.\ with such slope vector is indeed conjugate to the i.e.m.\ and, thus, has no wandering intervals.  
Finally, in \cite{yoccoz-wandering}, \citeauthor{yoccoz-wandering} proved that the existence of affine i.e.m.\ with wandering intervals semi-conjugate with a given i.e.m.\ is generic. Thus many non self-similar examples arise. 
\smallbreak

In this article we study the remaining case of the program stated in \cite{cam-gut}.  
That is, we consider a self-similar i.e.m.\ and a slope vector $\ell$ whose logarithm $\log\ell$ 
lies in an invariant subspace of $M$ corresponding to a conjugate pair of non real eigenvalues
of modulus strictly larger than one. We will heavily rely on the strategy of \cite{persistence} and the geometrical models for substitutions defined by \citeauthor{geometricalmodels} in \cite{geometricalmodels}. These geometrical models are often of fractal nature. We will see that some properties of these fractals are sufficient conditions for the existence of an affine i.e.m.\ with wandering intervals that is semi-conjugate with a given self-similar i.e.m.\ having the aforementioned properties. Specifically, we will prove the following theorem:
\smallbreak

\begin{cthm}{A}\label{teo:main0}
Let $T$ be a self-similar i.e.m. Assume that $M$ has an eigenvalue $\beta$ with $|\beta|>1$ such that $\beta/|\beta|$ is not a root of unity, and that there exists an eigenvector $\Gamma$ for $\beta$ such that $T$ has the unique representation property for $\beta$ and $\Gamma$. Then, for almost every $\gamma$ in the complex subspace generated by $\Gamma$, $\exp(-\Re(\gamma))$ can be realized as the slope vector of an affine i.e.m.\ which is semi-conjugate with $T$ and has wandering intervals. 
\end{cthm}

The unique representation property, which will be stated in Section \ref{sec:URP},
is related to the different ways that the extreme points of the dual Rauzy fractal, in the sense of \cite{geometricalmodels}, can be written as certain sums $\sum_{m\geq 1} z_m\beta^{-m}$
with coefficients $z_m$ belonging to a finite subset of the field $\Q[\beta]$.
\smallbreak

Most of the properties we will develop before the proof of the main theorem are of purely symbolic nature, in the sense that they can be proved for a primitive substitution and a non real eigenvalue together with an eigenvector of the matrix associated to such substitution.

The unique representation property needed in this article is in some way analogous the explicit algebraic condition required for the case treated in \cite{persistence}. We think that similar algebraic conditions imply the unique representation property, but we did not succeed to establish a proof, even if interesting examples can be found. However, we found that an algebraic condition similar to the one needed in \cite{persistence} provides a simplification in the hypotheses of Theorem~\ref{teo:main}. Indeed, assume that either $\beta$ is Galois-conjugate with $\alpha^{-1}$, or that $\beta$ is Galois-conjugate with $\alpha$ and the self-similarity comes from Rauzy-Veech renormalizations, that is to say, some iteration of the Rauzy-Veech renormalization returns to the original map. Then $\beta$ is a simple eigenvalue. Moreover, if $ \beta $ is not real, then $ \beta/|\beta| $ is not a root of unity (see Lemma~\ref{lem:simple eigenspaces}). In these cases, since the corresponding eigenspace is one-dimensional, the unique representation property depends only on $ T $ and $ \beta$. Since it is possible to construct an i.e.m.\ that is periodic for the Rauzy-Veech algorithm from most cycles of a Rauzy class, these i.e.m.'s are a natural family of self-similar maps.

Therefore, we have the following consequence of Theorem~\ref{teo:main}:
\smallbreak

\begin{cthm}{B}\label{teo:galoisconjugate}
Let $T$ be an i.e.m.\  and let $\beta$ with $|\beta|>1$ be a non-real eigenvalue of $ M $. Assume that either $\beta$ is Galois-conjugate to $ \alpha $, or that $\beta$ is Galois-conjugate with $ \alpha^{-1}$ and that $T$ is periodic for the Rauzy-Veech renormalization algorithm on the interval $[0,\alpha)$.
If $T$ has the unique representation property for $\beta$, then for almost every associated eigenvector $\gamma$,
$\exp(-\Re(\gamma))$ can be realized as the slope vector of an affine i.e.m.\ which is semi-conjugate with $T$ and has wandering intervals. 
\end{cthm}

\smallbreak

Finally,  we apply the techniques developed along this work to the cubic Arnoux-Yoccoz i.e.m.\  \cite{Arnouxyoccoz}. When defined on the circle, this map is self-similar and its renormalization matrix has an eigenvalue $\beta$ with $|\beta|>1$ such that $\beta/|\beta|$ is not a root of unity. We will discuss this example in Section \ref{sec:example} and prove that:
\smallbreak

\begin{cthm}{C}\label{teo:hyp0}
Let $\beta$ be a non real eigenvalue of the renormalization matrix associated to the cubic Arnoux-Yoccoz i.e.m.\ satisfying $|\beta|>1$ and that $\beta/|\beta|$ is not a root of unity. One has that,
for almost every eigenvector $\gamma$ for $\beta$, there exists a semi-conjugate affine i.e.m.\ with slope vector $\exp(-\Re(\gamma))$ exhibiting wandering intervals.
\end{cthm}

The fact that Theorem \ref{teo:main0} is valid for almost every eigenvector for $\beta$ when considering the cubic Arnoux-Yoccoz i.e.m.\ comes from the simplicity of the eigenvalue $\beta$ in this case.
\medbreak

The article is organized as follows. In Section \ref{sec:strategy} we outline the general strategy introduced in \cite{cam-gut} that will be used to prove the main theorem. In Section \ref{sec:preliminaries} we present some basic properties and definitions concerning self-similar i.e.m.'s. In Section \ref{sec:minimal sequences} the notion of minimal sequence is presented. In Section~\ref{sec:fractals} the fractals associated with $\beta$ and the concept of extreme points are defined. In Section \ref{sec:URP} the unique representation property is introduced.
In Section \ref{sec:main theorem} we restate and prove the main theorems. In Section \ref{sec:example} we prove that the cubic Arnoux-Yoccoz i.e.m.\ satisfies the unique representation property and thus that the main theorem can be applied to it.

\section{Strategy}
\label{sec:strategy}

Let $T \colon [0, 1) \to [0, 1)$ be a self-similar i.e.m.\ with continuity intervals $(I_a; a \in \A)$. Our goal is to prove that under the hypotheses of Theorem \ref{teo:main0} there exists an affine i.e.m.\ $f \colon [0, 1) \to [0, 1)$ with wandering intervals that is semi-conjugate with $T$ or, equivalently, is a topological extension of $T$. 

In order to achieve this we will follow the strategy devised by Camelier and 
Guti\'errez in \cite{cam-gut} and used by Cobo in \cite{cobo} and by Bressaud, Hubert and Maass in \cite{persistence}. That is, we search for a Borel probability measure $\mu$  with atoms on $[0, 1)$ that assigns positive measure to every open interval with the following property: for each $a \in \A$ there exists a positive real 
$\ell_a$ such that 
\begin{equation} \label{eq:measures}
\mu( T(J) ) = \ell_a \mu(J),
\end{equation}
for every Borel set $J \subseteq I_a$. By following the proof of Lemma 3.6 of \cite{cam-gut} and using such measure, one constructs an affine i.e.m.\ that is semi-conjugate with $T$ having a wandering interval. Indeed, assume that $\mu$ satisfies the aforementioned properties. Let $g \colon [0, 1) \to [0, 1)$ be defined by $g(t) = \mu([0, t))$ and $g(0) = 0$. We have that $g$ is strictly increasing, since $\mu$ is positive on open intervals. It is also right-continuous. Therefore, $G = [0, 1) \setminus g([0, 1))$ is a union of countably many intervals of the form $[t_0, t_1)$. Define the map $h \colon [0, 1) \to [0, 1)$ by $h = g^{-1}$ on $[0, 1) \setminus G$ and $h(t) = g^{-1}(t_1)$ if $t \in [t_0, t_1)$, where $[t_0, t_1)$ is a maximal interval in $G$. We have that $h$ is right-continuous, nondecreasing and surjective. Then, define $f \colon [0, 1) \to [0, 1)$ in the following way: if $t \in [0, 1) \setminus G$, we put $f(t) = h^{-1} \circ T \circ h(t)$. If $[t_0, t_1)$ is a maximal interval in $G$, define $f$ linearly and increasing between $[t_0, t_1)$ and $h^{-1}(T \circ h([t_0, t_1)))$. A straightforward computation shows that $h \circ f = T \circ h$ and that $f$ is an affine i.e.m.\ with the desired slope vector.

One way to construct such a measure is finding a sequence $\omega \in \Omega_T$, where $\Omega_T$ is the natural symbolic extension of $T$ constructed from codings of itineraries of points by $T$ with respect to the continuity intervals $(I_a;a \in \A)$, a complex vector $\gamma \in \C^\A$  and a number $\rho > 0$ such that:
\begin{equation}\label{eq:goal0}
\liminf_{n \to \infty} \frac{ \Re( \gamma_n(\omega)) }{n^\rho} > 0 \quad \text{ and } \quad  \liminf_{n \to \infty} \frac{ \Re( \gamma_{-n}(\omega)) }{n^\rho} > 0, 
\end{equation}
where $\gamma_n(\omega) = \gamma_{\omega_0} + \dotsb + \gamma_{\omega_{n-1}}$ and $\gamma_{-n}(\omega) = -(\gamma_{\omega_{-n}} + \dotsb + \gamma_{\omega_{-1}})$ for $n \geq 1$.
This implies that $K = \sum_{n \in \Z} \exp(-\Re(\gamma_n(\omega)) ) < \infty$. Thus, if $\omega\in\Omega_T$ is the coding by $T$ of $t \in [0, 1)$, then the measure
$\mu = \frac{1}{K} \sum_{n \in \Z} \exp(-\Re(\gamma_n(\omega)) ){\mathbf{1}}_{ T^n(t) }$
satisfies the desired properties for the slope vector $\ell=(\exp(-\Re(\gamma_a )); {a \in \A})$. This can be proved in an analogous way to Lemma 22 of \cite{persistence}.

Otherwise, if $\omega$ is not the coding by $T$ of any point in $[0, 1)$, then it is the coding by $T' \colon [0, 1) \to [0, 1)$ of some point in $[0, 1)$, where $T'$ is equal to $T$ up to some discontinuity points. The strategy can be still applied to find $f' \colon [0, 1) \to [0, 1)$ and $h \colon [0, 1) \to [0, 1)$ such that $h \circ f' = T' \circ h$, where $h$ is continuous, surjective, nondecreasing and noninjective. Let $f$ be the right-continuous function that is equal to $f'$ up to a finite number of points. Then $f$ is an affine i.e.m. By right-continuity, $h \circ f = T \circ h$. Moreover, $f$ has wandering intervals since $h(J)$ is a point for some interval $J \subseteq [0, 1)$. Therefore, it is enough to prove \eqref{eq:goal0} for a sequence $\omega$ in $\Omega_T$. This last fact can also be proved using the classical construction of Keane in \cite{keane}.

\section{Background and Preliminaries}\label{sec:preliminaries}

\subsection{Interval exchange maps and affine interval exchange maps}

A bijective map $T \colon [0, 1) \to [0, 1)$ is said to be an \emph{interval exchange map} (i.e.m.)\ if there exists a finite partition $(I_a;a \in \A)$ of $[0, 1)$ made of intervals such that $T(t) = t + \delta_a$ for each $t \in I_a$ and $a \in \A$. Clearly $T$ is a piecewise isometry of the unit interval exchanging the intervals $(I_a;a \in \A)$. 
The vector $\delta = (\delta_a;a \in \A)$ is called the \emph{translation vector} of $T$. 
An i.e.m.\ $T$ is determined by the following combinatorial data: a \emph{length vector} $\lambda = (\lambda_a;a \in \A)$ of positive entries corresponding to the length of each interval $I_a$ and a pair of bijections 
$\pi_0, \pi_1 \colon \{1, \dotsc, |\A|\} \to \A$ encoding the order of the intervals $(I_a;a \in \A)$ before and after the transformation. 

An i.e.m.\ $T$ is said to be \emph{self-similar} if there exists $0 < \alpha < 1$ such that the map $T^{(1)}\colon [0, \alpha) \to [0, \alpha)$ of first return by $T$ to the interval $[0, \alpha)$ is, up to rescaling, equal to $T$. These maps are called \emph{renormalizable} in \cite{cam-gut}. In \cite{cubicAY} they are called \emph{scale-invariant} and the term self-similar is used for maps such that the induced map on some interval is, up to rescaling and rotation, the same map $T$. Here, for convenience, we keep the notation used in \cite{cam-gut} and used implicitly in \cite{persistence}.

For each $a \in \A$ we define the interval 
$I_a^{(1)} = \alpha I_a$ and denote by $R$ the \emph{renormalization matrix} given by $R_{a,b} = |\{ 0 \leq k \leq r_b - 1; T^k(I^{(1)}_b) \subseteq I_a\}|$, where $r_b$ is the first return time of $I^{(1)}_b$ to $[0, \alpha)$.

Given an i.e.m., the \textit{Rauzy-Veech renormalization algorithm} produces a new i.e.m.\ by considering the first-return map on a specific interval and rescaling the domain to $[0, 1)$. A \textit{Rauzy class} consists of the possible combinatorics for the starting and ending position of $(I_a)_{a \in \A}$ that can be reached from a fixed permutation by applying the Rauzy-Veech algorithm (for details, see \cite{viana}). As explained later, cycles on Rauzy classes provide a natural way to construct self-similar i.e.m. Nevertheless, we will not assume that the self-similar i.e.m.\ comes from this construction, unless otherwise stated.

Self-similar i.e.m.'s are always uniquely ergodic and therefore minimal (see \cite{Veech78}). Recall that $T$ is minimal if any point in $[0,1)$ has a dense orbit. For more details on minimal i.e.m.'s see \cite{keane, viana}.

An {\it affine interval exchange map} (affine i.e.m.)\ $f \colon [0, 1) \to [0, 1)$ is a bijective, piecewise affine map with positive slopes. If $(J_a;a \in \A)$ are the continuity intervals of $f$ we say that $\ell = (\ell_a; a \in \A)$ is its \emph{slope vector}, where $\ell_a > 0$ is the slope of $f$ restricted to $J_a$: $f(t)=\ell_a t + \mathcal{d}_a$ for every $t\in J_a$ and for some translation vector $\mathcal{d}=(\mathcal{d}_a;a\in\A)$. Clearly, an i.e.m.\ is an affine i.e.m.\ with slope vector $\ell=(1,\dotsc,1)$. 

We are interested in affine i.e.m.\ extensions of an i.e.m.\ $T$ with wandering intervals. That is, an affine i.e.m.\ $f$ with wandering intervals such that there exists a continuous, surjective, nondecreasing and noninjective map $h \colon [0, 1) \to [0, 1)$ satisfying $h \circ f = T \circ h$. As mentioned in the introduction the existence of such extensions was already studied in \cite{cam-gut,cobo,persistence,bressaud-deviation,yoccoz-wandering}. 

\subsection{Substitution subshifts and prefix-suffix decomposition}

We refer to \cite{Qu} and \cite{fogg} and references therein for the general theory of substitutions.

Let $\A$ be a finite set or \emph{alphabet}. A \emph{word} is a finite string of symbols in $\A$, namely $w=w_0\ldots w_{m-1}$, where $|w|=m$ is called the \emph{length} of $w$. The \emph{empty word} $\varepsilon$ is defined as the word of length zero. The set of all words in $\A$ is denoted by $\A^*$ and the set of words of positive length is denoted by $\A^+=\A^*\setminus \{\varepsilon\}$.

We will need to consider words indexed by integers. We will write such a word as $w=w_{-m} \ldots w_{-1}\, \cdot \, w_0\ldots w_{n}$, where $m,n$ are nonnegative integers and the dot separates negative and nonnegative coordinates. The set of one-sided infinite sequences $\omega=(\omega_m)_{m \geq 0}$ in $\A$ is denoted by $\A^\N$. Analogously, $\A^\Z$ denotes the set of two-sided infinite sequences $\omega=(\omega_m)_{m\in \Z}$.

A \emph{substitution} is a map  $\sigma \colon \A \to \A^+$. 
It naturally extends to $\A^+$, $\A^\N$ and $\A^\Z$ by concatenation. For
$\omega=(\omega_m)_{m\in \Z} \in \A^{\Z }$ the extension is given by 
$$
\sigma(\omega)=\ldots{} \sigma(\omega_{-2})\sigma(\omega_{-1}) \, \cdot \, \sigma(\omega_0)\sigma(\omega_1) \ldots{} , 
$$
where the central dot separates the negative and nonnegative coordinates
of $\sigma(\omega)$. A further natural convention is that $\sigma(\varepsilon) = \varepsilon$.

Let $M^\sigma$ be the matrix with indexes in $\A$ such that $M^\sigma_{a,b}$ is the number of times the letter $b$ appears in $\sigma(a)$ for any $a,b \in \A$. The substitution is said to be \emph{primitive} if there exists an integer $n \geq 1$ such that, for any $a \in \A$, $\sigma^n(a)$ contains every letter of $\A$, where $\sigma^n$ denotes $n$ consecutive iterations of $\sigma$.

Let $\Omega_\sigma \subseteq \A^\Z$ be the subshift defined from $\sigma$. That is, $\omega \in \Omega_\sigma$ if and only if any subword of $\omega$ is a subword of $\sigma^n(a)$ for some integer $n \geq 0$ and $a \in \A$. We call $\Omega_\sigma$ the \emph{substitution subshift} associated with $\sigma$. This subshift is minimal whenever $\sigma$ is primitive. 

Assume $\sigma$ is primitive. By the recognizability property (see \cite{mosse}), given a point $\omega \in \Omega_\sigma$ there exists a unique sequence $(p_m,c_m,s_m)_{m\geq 0} \in (\A^* \times \A \times \A^*)^\N$ such that for each integer $m \geq 0$ we have $\sigma(c_{m+1})=p_m c_m s_m$ and
$$
	{}\ldots \sigma^3(p_3) \sigma^2(p_2)\sigma^1(p_1) p_0 \, \cdot \, c_0 s_0 \sigma^1(s_1)\sigma^2(s_2)\sigma^3(s_3)\ldots{}
$$
is the central part of $\omega$, where the dot separates negative and nonnegative coordinates. We remark that 
the $p_m$'s (resp.\ $s_m$'s) are in the finite subset of $\A^*$ containing all prefixes (resp.\ suffixes) 
of $\sigma(a)$ for all $a$ in $\A$.
This sequence is called the {\it prefix-suffix decomposition} of $\omega$ (for more details see for instance \cite{CS}). 

If only finitely many suffixes $s_m$'s are nonempty, then there exist $a \in \A$ and nonnegative integers $n$ and $q$ such that
$$
	\omega_{[0,\infty)}=c_0 s_0 \sigma^1(s_1)\ldots \sigma^n(s_n)  \lim_{m\to \infty} \sigma^{m q}(a).
$$
Analogously, if only finitely many $p_m$'s are nonempty, then there exist $a \in \A$ and nonnegative integers $n$ and $q$ such that
$$
	\omega_{(-\infty,-1]}=\lim_{m\to \infty} \sigma^{m q}(a) \sigma^n(p_n) \ldots \sigma^1(p_1) p_0.
$$

The recognizability property also implies that $\Omega_\sigma=\bigcup_{m=0}^{n} S^{-m} (\sigma(\Omega_\sigma))$ for some positive integer $n$, where 
$S \colon\A^\Z\to \A^\Z$ is the left shift map. 

\subsection{Symbolic coding of self-similar i.e.m.'s}
Let $T$ be a self-similar i.e.m.\ and $(I_a;a\in \A)$ its associated intervals. Recall that under the self-similar condition $T$ is minimal. Given $t \in [0,1)$ we construct a symbolic sequence 
$\omega=(\omega_m)_{m\in \Z} \in \A^\Z$, where 
$\omega_m=a$ if and only if $T^m(t) \in I_a$. The sequence $\omega$ is called the \emph{itinerary} of $t$.
Let $\Omega_T \subseteq A^\Z$ be the closure of the set of sequences constructed in this way for every $t \in [0,1)$.  
Clearly the sequence associated with $T(t)$ corresponds to $S(\omega)$, where 
$S \colon\A^\Z\to \A^\Z$ is the left shift map. Moreover, it is classical that there exists 
a continuous and surjective map $\pi_T \colon \Omega_T \to [0,1)$ such that $T\circ \pi_T=\pi_T\circ S$. The map $\pi_T$ is invertible up to a countable set of points corresponding to the orbits of discontinuities of $T$. 
Since $T$ is self-similar, the restriction of $S$ to $\Omega_T$ is minimal and $\Omega_T$ is a substitutive subshift associated with a substitution $\sigma \colon\A\to \A^+$. That is, $\Omega_T = \Omega_\sigma$ for some substitution $\sigma$.
The substitution is constructed in the following way: $\sigma(a)=w_0\ldots w_{r_a-1}$ if and only if $T^m(I^{(1)}_a) \subseteq I_{w_m}$ for all $0\le m \leq r_a - 1$ and $a \in \A$. We then have that $M^\sigma=R^t$, the transpose of the renormalization matrix associated with $T$. For details see \cite{cam-gut}.
	
\section{Minimal sequences associated with a self-similar i.e.m.}\label{sec:minimal sequences}

We fix a self-similar interval exchange map $T$ which is self-induced on the interval $[0,\alpha)$ with $0<\alpha<1$. Let $M=R^t$ be the matrix of its associated substitution $\sigma$ and
$\beta \in \C$ be an eigenvalue of $M$ with $|\beta| > 1$ and such that $\beta/|\beta|$ is not a root of unity. We fix an eigenvector $\gamma$ for $\beta$ for the rest of the section.

\begin{defn}\label{def:sumas_gamma}
For $w=w_0\ldots w_{n-1}  \in \A^*$ we set $\gamma(w) =\gamma_{w_0}+\dotsb+\gamma_{w_{n-1}}$. 
\end{defn}
It is easy to see that for any integer $n\geq 0$, 
\begin{equation}\label{eq:gamma(sigma^n)} 
\gamma(\sigma^n(w)) = \beta^n \gamma(w).
\end{equation}

For a sequence $\omega=(\omega_m)_{m\in\Z} \in \Omega_T$ we define 
$\gamma_0(\omega) = 0$, $\gamma_n(\omega) = \gamma( \omega_0 \ldots \omega_{n - 1} )$ for $n \geq 1$ and $\gamma_n(\omega) = -\gamma(\omega_n \ldots \omega_{-1})$ for $n \leq -1$.

\begin{defn}\label{def:minimal point}
A sequence $\omega \in \Omega_T$ is a \emph{minimal sequence} for the vector 
$\gamma$ if $$\Re( \gamma_n(\omega))\geq 0 \text{ for all } n\in \Z.$$
\end{defn}

Our purpose is to prove that minimal sequences for some eigenvector $\gamma$ satisfy equation \eqref{eq:goal0}, that is, that there exists a number $\rho > 0$ such that:
\begin{equation*}
\liminf_{n \to \infty} \frac{ \Re( \gamma_n(\omega)) }{n^\rho} > 0 \quad \text{ and } \quad  \liminf_{n \to \infty} \frac{ \Re( \gamma_{-n}(\omega)) }{n^\rho} > 0.
\end{equation*}

This does not necessarily hold for an arbitrary eigenvector $\gamma$. 

The next lemma illustrates a property of minimal sequences for finite words. 

\medskip

\begin{lem}
\label{lem:minimal-leq} 
Let $\omega \in \Omega_T$ be a minimal sequence for $\gamma$. Then, for all integers $n \leq -1$ and $m \geq n$ we have that
$$\Re(\gamma(\omega_n \ldots \omega_{-1})) \leq \Re( \gamma (\omega_n \ldots \omega_m) ).$$
\end{lem}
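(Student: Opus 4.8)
The plan is to show that the difference between the two sides of the asserted inequality equals exactly $\Re(\gamma_{m+1}(\omega))$, which is nonnegative by the very definition of a minimal sequence.

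First I would observe that the two-sided sequence $\bigl(\gamma_k(\omega)\bigr)_{k\in\Z}$ behaves like a sequence of partial sums with increments $\gamma_{\omega_k}$, that is, $\gamma_{k+1}(\omega)-\gamma_k(\omega)=\gamma_{\omega_k}$ for every $k\in\Z$. For $k\ge 0$ and for $k=-1$ this is immediate from the definitions $\gamma_n(\omega)=\gamma(\omega_0\ldots\omega_{n-1})$ for $n\ge 1$ and $\gamma_0(\omega)=0$; for $k\le -2$ it follows from the additivity of $w\mapsto\gamma(w)$ under concatenation (Definition~\ref{def:sumas_gamma}) together with the convention $\gamma_n(\omega)=-\gamma(\omega_n\ldots\omega_{-1})$ for $n\le -1$. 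Telescoping this identity yields, for all integers $p\le q$,
$$\gamma(\omega_p\ldots\omega_q)=\sum_{k=p}^{q}\gamma_{\omega_k}=\gamma_{q+1}(\omega)-\gamma_p(\omega);$$
in particular $\gamma(\omega_n\ldots\omega_{-1})=-\gamma_n(\omega)$ (consistent with the definition) and, for $m\ge n$, $\gamma(\omega_n\ldots\omega_m)=\gamma_{m+1}(\omega)-\gamma_n(\omega)$.

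Subtracting these two expressions gives $\gamma(\omega_n\ldots\omega_m)-\gamma(\omega_n\ldots\omega_{-1})=\gamma_{m+1}(\omega)$, and taking real parts,
$$\Re\bigl(\gamma(\omega_n\ldots\omega_m)\bigr)-\Re\bigl(\gamma(\omega_n\ldots\omega_{-1})\bigr)=\Re\bigl(\gamma_{m+1}(\omega)\bigr)\ge 0,$$
where the last inequality is the defining property of a minimal sequence for $\gamma$ (Definition~\ref{def:minimal point}), valid for the index $m+1$ whether it is positive, zero, or negative. This is precisely the claimed inequality; the degenerate case $m=-1$ is included, both sides then being equal.

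The argument is essentially bookkeeping, and the only point requiring any care — the ``main obstacle'', such as it is — is verifying the increment identity $\gamma_{k+1}(\omega)-\gamma_k(\omega)=\gamma_{\omega_k}$ at and just below $k=0$, i.e.\ checking that the two sign conventions for $\gamma_k(\omega)$ on the nonnegative and negative sides fit together correctly. Once that is in place, the statement follows from additivity of $\gamma$ and the inequality $\Re(\gamma_k(\omega))\ge 0$ alone.
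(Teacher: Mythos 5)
Your proof is correct and follows essentially the same route as the paper: both reduce the claim to the additivity of $\gamma$ under concatenation plus the defining inequality $\Re(\gamma_k(\omega))\ge 0$, the paper splitting into the cases $m\le -1$ and $m\ge 0$ while you package both via the telescoping identity $\gamma(\omega_n\ldots\omega_m)-\gamma(\omega_n\ldots\omega_{-1})=\gamma_{m+1}(\omega)$. (Incidentally, your index bookkeeping is the cleaner one: in the paper's case $m\ge 0$ the difference is $\Re(\gamma_{m+1}(\omega))$ rather than $\Re(\gamma_m(\omega))$, a harmless slip since minimality applies either way.)
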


\begin{proof}
If $m \leq -1$, then
$$\Re( \gamma (\omega_n \ldots \omega_{-1}) ) - \Re( \gamma (\omega_n \ldots \omega_m) ) = -\Re( \gamma_{m+1}(\omega)) \leq 0$$
by minimality of $\omega$. If $m \geq 0$, then
$$\Re( \gamma (\omega_n \ldots \omega_m) ) - \Re( \gamma (\omega_n \ldots \omega_{-1}) ) = \Re( \gamma_m (\omega) ) \geq 0$$
by minimality of $\omega$.
\end{proof}

\begin{lem}\label{lem:existence of minimal points}
There exist minimal sequences for $\gamma$.
\end{lem}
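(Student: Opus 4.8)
The plan is to construct a minimal sequence by a compactness argument, exploiting the fact that $\Omega_T$ is a compact subshift and that any i.e.m.\ point gives a sequence whose partial sums $\Re(\gamma_n(\omega))$ stay in a controlled range. First I would recall that since $T$ is minimal and $\pi_T$ is surjective, every $\omega \in \Omega_T$ is (a limit of) itineraries of points $t \in [0,1)$; and that for such an itinerary the quantities $\Re(\gamma_n(\omega))$ do \emph{not} drift to $-\infty$ nor $+\infty$ in an unbounded way relative to the geometry, because $\gamma$ lies in an eigenspace transverse to the Perron--Frobenius direction, so $\gamma(w)$ is comparable to $|w|^\theta$ for some $\theta < 1$ (crudely, it is $o(|w|)$). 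The key observation is: for a fixed large $N$, among the points of the $T$-orbit of a given $t$, one can choose an index $k$ at which $\Re(\gamma_k(\omega))$ is minimal over a window of length $2N+1$; shifting by $k$ produces a word of length $2N+1$ (in fact a bi-infinite piece once we pass to the subshift) all of whose partial sums from the chosen center are nonnegative.

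Concretely, I would proceed as follows. Fix any $\omega^{(0)} \in \Omega_T$. For each $n \geq 1$, let $k_n \in \{-n, \dots, n\}$ be an index minimizing $\Re(\gamma_{k}(\omega^{(0)}))$ over $k$ in that range, and set $\omega^{(n)} = S^{k_n}(\omega^{(0)}) \in \Omega_T$. By the minimizing choice, for every $j$ with $-n - k_n \le j \le n - k_n$ we have $\Re(\gamma_j(\omega^{(n)})) = \Re(\gamma_{k_n + j}(\omega^{(0)})) - \Re(\gamma_{k_n}(\omega^{(0)})) \ge 0$, using the cocycle identity $\gamma_{a+b}(\omega) = \gamma_a(\omega) + \gamma_b(S^a \omega)$ which follows directly from Definition~\ref{def:sumas_gamma}. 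Since $k_n \in \{-n,\dots,n\}$, at least one of $n - k_n$ and $n + k_n$ is $\ge n$, hence by a further reflection/subsequence argument (or by symmetrizing the construction, minimizing over a symmetric window after recentering) one arranges that the range of indices $j$ for which $\Re(\gamma_j(\omega^{(n)})) \ge 0$ holds grows to include $\{-m_n, \dots, m_n\}$ with $m_n \to \infty$. Then extract a subsequence of $(\omega^{(n)})$ converging in $\Omega_T$ to some $\omega$; since each defining inequality $\Re(\gamma_j(\omega)) \ge 0$ depends only on finitely many coordinates of $\omega$ and is a closed condition, it passes to the limit, so $\Re(\gamma_j(\omega)) \ge 0$ for all $j \in \Z$, i.e.\ $\omega$ is minimal for $\gamma$.

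The main technical point, and the step I expect to require the most care, is making the window genuinely two-sided: a single minimizer $k_n$ over $\{-n,\dots,n\}$ only guarantees nonnegativity of $\Re(\gamma_j(S^{k_n}\omega^{(0)}))$ for $j$ ranging over an interval that may be badly unbalanced around $0$ (it could be, say, $\{0,\dots,2n\}$ if $k_n = -n$). The clean fix is to pick, for each $n$, the index $k_n$ that minimizes $\Re(\gamma_k(\omega^{(0)}))$ over $k \in \{-n, \dots, n\}$ and then observe that after recentering the constraint holds on $\{-n-k_n, \dots, n-k_n\} \ni 0$; choosing instead a doubly-indexed family $\omega^{(m,n)} = S^{k}(\omega^{(0)})$ with $k$ minimizing over $\{m, \dots, n\}$ and letting $m \to -\infty$, $n \to +\infty$ independently along a diagonal, together with compactness of $\Omega_T$, yields a limit point all of whose two-sided partial sums are $\ge 0$. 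One must check only that the minimizers exist (immediate, finite sets) and that the limit is taken in the subshift so that membership $\omega \in \Omega_T$ is preserved (immediate, $\Omega_T$ closed and shift-invariant). No hypothesis beyond $\omega^{(0)} \in \Omega_T$ is actually needed for existence; the condition that $\beta/|\beta|$ not be a root of unity will only matter later, for the quantitative growth statement~\eqref{eq:goal0}, not for this lemma.
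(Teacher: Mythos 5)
There is a genuine gap, and it sits exactly at the point you yourself flag as ``the main technical point'': the recentering argument does not yield two-sided nonnegativity, and the fix you propose does not repair it. If $k$ minimizes $\Re(\gamma_j(\omega^{(0)}))$ over a window $\{m,\dots,n\}$, the recentered inequalities hold only for indices in $\{m-k,\dots,n-k\}$, and nothing prevents the minimizer from lying at (or within bounded distance of) an endpoint of the window for \emph{every} choice of $m$ and $n$; in that case one side of the recentered window never grows, and any limit point satisfies $\Re(\gamma_j(\omega))\ge 0$ only for $j\le 0$ (or only for $j\ge 0$), which is not minimality. Letting $m\to-\infty$ and $n\to+\infty$ independently ``along a diagonal'' changes nothing, because the position of the minimizer inside the window is exactly what is not controlled. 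This is not a fussy point: unless $\Re(\gamma)$ is a coboundary, for typical $\omega^{(0)}$ the partial sums $\Re(\gamma_j(\omega^{(0)}))$ are unbounded below in both time directions and their infimum over $j\in\Z$ is not attained, so the window minimizers really do escape to the edges and special points must be produced by an extra mechanism. Your opening heuristic that the sums ``stay in a controlled range'' because $\gamma$ is transverse to the Perron--Frobenius direction is not correct (indeed the whole point of the paper is that along minimal sequences the sums grow like $n^\rho$), and in any case it is never used to pin down the minimizer.

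The paper closes precisely this gap by using the substitution structure instead of a generic compactness argument: since $\Re(\gamma)\neq 0$ and $\Re(\gamma)$ is orthogonal to $\lambda$, there are letters $a,b$ with $\Re(\gamma_a)<0<\Re(\gamma_b)$ and a word $awb$ in the language; choosing $n_k$ with $\Re(\beta_0^{n_k})$ bounded below and using $\gamma(\sigma^{n_k}(c))=\beta^{n_k}\gamma_c$, one gets $\Re(\gamma(\sigma^{n_k}(a)))\to-\infty$ and $\Re(\gamma(\sigma^{n_k}(b)))\to+\infty$. Placing the origin at a minimal prefix of $\sigma^{n_k}(a)\sigma^{n_k}(w)\sigma^{n_k}(b)$, the left block has $\gamma$-sum at most $\Re(\gamma(\sigma^{n_k}(a)))$ and the right block has $\gamma$-sum at least $\Re(\gamma(\sigma^{n_k}(b)))$, so \emph{both} blocks have length tending to infinity, and a limit of these centered words is a minimal sequence. (A purely soft repair of your approach is possible, but it is a different two-step argument: first obtain a point whose forward sums are all nonnegative by the standard compactness-plus-zero-mean lemma; then, if no such point also had nonpositive backward sums, compactness would give uniform $N$ and $\epsilon>0$, and recentering at the argmax of the first $N$ backward sums could be iterated to force backward Birkhoff averages above $\epsilon/N$, contradicting unique ergodicity.) Neither this nor the paper's construction appears in your sketch, so as written the proof does not go through.
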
 
\begin{proof}

We have that $\Re(\gamma)\neq 0$. Indeed, if $\Re(\gamma) = 0$ then $\gamma = i \chi$, where $\chi \in \R^\A$ is an eigenvector associated to $\beta$, i.e., $M \chi = \beta \chi$. Since $M$ is an integer matrix, $M \chi \in \R^\A$. This contradicts the fact that $\beta$ is not real. 

Recall that $\Re(\gamma)$ is orthogonal to $\lambda$. Then, there exist letters $a,b\in \A$ with $\Re(\gamma_a) < 0$ and $\Re(\gamma_b) > 0$. 
Also, by minimality of $T$, there exists an itinerary of the form $a w b$ with $w\in\A^*$.  

Let $\beta_0=\beta/|\beta|$ and consider $(n_k)_{k \geq 1}$ a sequence of integers such that $\Re(\beta_0^{n_k}) > C$ for every integer $k$ and some constant $C > 0$.
In this way there is a decomposition 
$$\sigma^{n_k}(awb)=\sigma^{n_k}(a)\sigma^{n_k}(w) \sigma^{n_k}(b)= \omega_{-m_k}\omega_{-m_k+1}\ldots \omega_{-1} \, \cdot \, \omega_0\omega_1\ldots \omega_{m_k'},$$ with the dot marking the minimal sum with respect to $\gamma$, i.e., 
$$\Re(\gamma(\omega_{-m_k}\ldots \omega_{-1})) \leq \Re(\gamma(\omega_{-m_k}\ldots \omega_m))$$
for every $-m_k \leq m \leq m_k'$. We claim that
both sequences $(m_k)_{k\geq 0}$ and $(m_k')_{k\geq 0}$ go to infinity when $k\to \infty$.
For this we only need to prove that both sequences $\Re(\gamma(\omega_{-m_k}\dots\omega_{-1}))$ and
$\Re(\gamma(\omega_{0}\dots\omega_{m_k^\prime}))$ are unbounded. 
First recall that, from \eqref{eq:gamma(sigma^n)}, $\gamma(\sigma^{n_k}(a)) = \beta^{n_k}\gamma_a = |\beta|^{n_k}\beta_0^{n_k}\gamma_a$ and $\gamma(\sigma^{n_k}(b)) = |\beta|^{n_k}\beta_0^{n_k}\gamma_b$. 
Since the sequence $(\Re(\beta_0^{n_k}))_{k \geq 1}$ is positive and bounded from below, the choice of $a$ and $b$ implies that
$$
\Re(\gamma(\sigma^{n_k}(a))) \to -\infty \quad \text{ and } \Re(\gamma(\sigma^{n_k}(b))) \to \infty.
$$
But from the definition of the decomposition we have that
$$\Re(\gamma(\sigma^{n_k} (a))) \ge \Re(\gamma(\omega_{m_k}\dots\omega_{-1}))
\text{ and } 
\Re(\gamma(\sigma^{n_k} (b))) \le \Re(\gamma(\omega_{0}\dots\omega_{m_k^\prime})).$$
Thus, the claim follows. 

By taking a subsequence one can assume that $\omega_{-m_k}\omega_{-m_k+1}\ldots \omega_{-1} \, \cdot \, \omega_0\omega_1\ldots \omega_{m_k'}$ converges to a 
sequence $\omega \in \A^\Z$. Since $a w b$ is a subword of a point in $\Omega_T$ and $\sigma(\Omega_T)\subseteq \Omega_T$, we obtain that $\omega \in \Omega_T$. Moreover, by construction we have that 
$\Re(\gamma_n(\omega)) \geq 0$ for every $n \in \Z$, which proves that $\omega \in \Omega_T$ is a minimal sequence for $\gamma$. 
\end{proof}

\section{Fractals associated with a self-similar i.e.m.}\label{sec:fractals}

We continue with notations of previous section. So $\beta \in \C$ is an eigenvalue of $M$ with $|\beta| > 1$ such that $\beta_0 = \beta/|\beta|$ is not a root of unity. Along this section $\gamma$ is a fixed eigenvector of $M$ for $\beta$. 

For $a \in \A$ set
$$
\S_a = \{(p_m, c_m, s_m)_{m \geq 1};\sigma(a) = p_1 c_1 s_1 \text{   and   } \sigma(c_{m}) = p_{m+1} c_{m+1} s_{m+1} \text{ for all } m \geq 1 \}.
$$

For $x \in \S_a$ write $x = (p_m^x, c_m^x, s_m^x)_{m \geq 1}$ and define 
functions $\f_a, \f_a^{(n)}\colon \S_a \mapsto \C$ for each $a\in\A$ and integer $n \ge 1$ by
$$\f_a(x) = \sum_{m \geq 1} \beta^{-m} \gamma(p_m^x)\textrm{ and }\f^{(n)}_a(x) = \sum_{m = 1}^n \beta^{-m} \gamma(p_m^x).$$

Clearly, the previous functions and several notations below depend on $\gamma$. To simplify we will not include $\gamma$ in the notations unless it becomes necessary. 

\begin{defn}\label{def:fractal}
The \emph{fractal associated with $a \in \A$} is the set 
$\F_a = \{ \f_a(x); x \in \S_a \}$. We also define $\F^{(n)}_{a} = \{ \f^{(n)}_a(x); x \in \S_a \}$. We say that $x \in \S_a$ is a \emph{representation of $z \in \F_a$} if $z = \f_a(x)$.
\end{defn}

Let $a \in \A$. From the definition of $\S_a$ it is easy to see that for any $x \in \S_a$ we have that $\sigma^n(a)=\sigma^{n-1}(p_1^x)\ldots p_n^x c_n^x s_n^x\ldots \sigma^{n-1}(s_1^x)$ for any integer $n \geq 1$. Furthermore, we have that $\F_a = \overline{ \bigcup_{n \geq 1} \F^{(n)}_{a} }$. Indeed, let $x \in \S_a$ and $n \geq 1$. We can ``truncate'' $x$ in the following way: let $x|_n$ be the sequence defined by $(p_m^{x|_n}, c_m^{x|_n}, s_m^{x|_n}) = (p_m^x, c_m^x, s_m^x)$ for every $1 \leq m \leq n$. For each $m \geq n + 1$ we proceed inductively by defining $(p_m^{x|_n}, c_m^{x|_n}, s_m^{x|_n}) = (\varepsilon, c_m^{x|_n}, s_m^{x|_n})$ in a way such that $\sigma(c_{m}^{x|_n}) = c_{m+1}^{x|_n}s_{m+1}^{x|_n}$. It is easy to see that $x|_n \in \S_a$ and that $\f_a(x|_n) = \f_a^{(n)}(x)$, so $\F^{(n)}_{a} \subseteq \F_a$ and the claim follows. In a similar way, we can construct a point $x \in \S_a$ such that $p_m^x = \varepsilon$ for every $m \geq 1$. Clearly, $\f_a(x) = 0$, so $0 \in \F_a$. 
Moreover, $\F_a \neq \{0\}$. In fact, by primitivity of $\sigma$ and the fact that $\gamma \neq 0$ it is easy to see that, for some integer $n \geq 1$, there exists a prefix $w$ of $\sigma^n(a)$ that satisfies $\gamma(w) \neq 0$. We can then choose $x \in \S_a$ such that $w = \sigma^{n-1}(p_1^x)\ldots p_n^x$. Therefore,
\begin{equation}\label{eq:prefixes of sigma^n(a)}
\begin{split}
	\f_a^{(n)}(x)
	&= \sum_{m = 1}^n \beta^{-m} \gamma(p_m^x) = \frac{1}{\beta^n}\sum_{m = 1}^n \beta^{n-m} \gamma(p_m^x) = \frac{1}{\beta^n}\sum_{m = 1}^n \gamma(\sigma^{n-m}(p_m^x)) \\
	&= \frac{1}{\beta^n} \gamma(\sigma^{n-1}(p_1^x)\ldots p_{n-1}^x) = \frac{1}{\beta^n} \gamma(w) \neq 0.
\end{split}\end{equation}

Finally,  consider $x \in \S_a$ such that $s_m^x = \varepsilon$ for each integer $m \geq 1$, which can be defined inductively as before. We obtain that $\sigma^n(a) = \sigma^{n-1}(p_1^x)\ldots p_n^x c_n^x$ for each integer $n \geq 1$, so a similar computation as \eqref{eq:prefixes of sigma^n(a)} shows that $\f_a(x) = \gamma_a \in \F_a$.

An elementary computation yields that $\F_a$ is a particular case of a fractal built by projecting stepped lines to an expanding plane as defined in \cite{geometricalmodels} and it is not necessarily a Rauzy fractal. Thus, the fractal $\F_a$ is a compact and path-connected subset of the plane. In addition, in \cite{geometricalmodels} 
the authors study the fractals associated with the cubic Arnoux-Yoccoz i.e.m., proving that they have nonempty interior and that they are somehow related to the so-called tribonnacci fractal. 
In Section \ref{sec:example} we will find a parametrization of the boundary of the fractal associated with the cubic Arnoux-Yoccoz map. This map will serve as an example to illustrate our main result and the techniques developed along the article. 
Although this i.e.m.\ is not self-similar in the sense we are following here 
(see  Section \ref{sec:preliminaries}), Theorem \ref{teo:main0} can be still applied by making some slight modifications. 
\smallbreak

We are interested in the extreme points of the defined fractals along directions.

\begin{figure}
	\centering
	\includegraphics[width=0.35\textwidth]{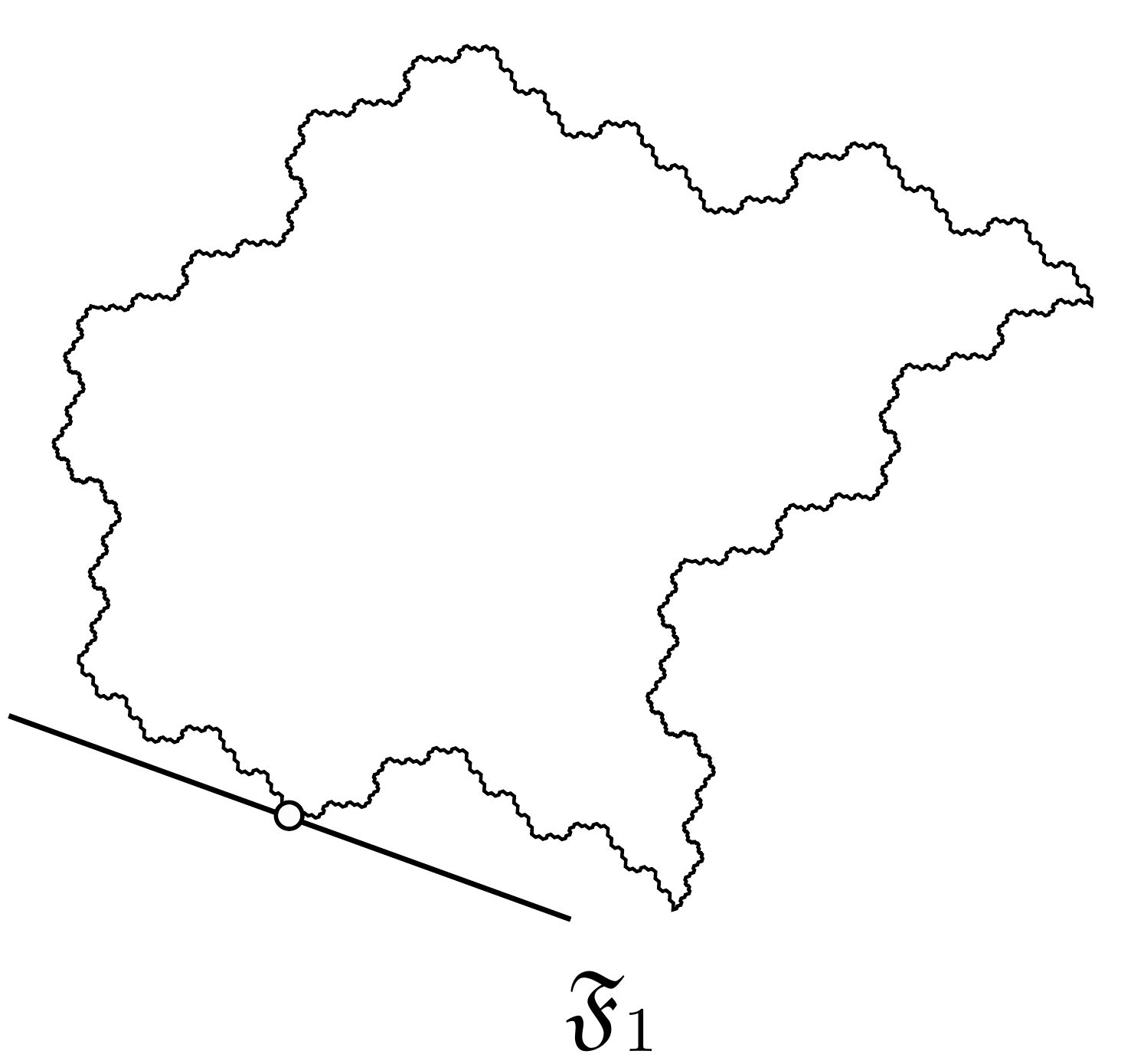} 
	\caption{The dot indicates an extreme point of one Arnoux-Yoccoz fractal in the direction of the line.}
	\label{fig:extreme}
\end{figure}

\begin{defn}\label{def:extremepoints}
For $\tau \in \SS^1$ and an integer $n\geq 1$ define 
$$v_a(\tau) = \min_{z \in \F_a} \Re(\tau z) \text{ and } 
v^{(n)}_{a}(\tau) = \min_{z \in \F^{(n)}_{a}} \Re(\tau z).$$
We call $E_a(\tau) = \{ z \in \F_a; \Re( \tau z ) = v_a(\tau) \}$ the \emph{set of  extreme points} of $\F_a$ for the direction $\tau$ as shown by Figure \ref{fig:extreme}. Clearly $v_a(\tau) \leq v^{(n)}_{a}(\tau) \leq 0$ since $\F_a^{(n)} \subseteq \F_a$ and $0 \in \F_a^{(n)}$. 
\end{defn}

\begin{lem}
\label{lem:continuity of v_a}
The function $v_a \colon \SS^1 \to \R$ is continuous for every $a \in \A$.
\end{lem}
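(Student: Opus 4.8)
The plan is to show that $v_a$ is the infimum of a family of continuous (indeed uniformly Lipschitz) functions and is itself Lipschitz, using the compactness of $\F_a$ established earlier. Since $\F_a$ is compact, for each $\tau \in \SS^1$ the minimum $v_a(\tau) = \min_{z \in \F_a}\Re(\tau z)$ is attained, so $v_a$ is well-defined. For fixed $z \in \C$, the map $\tau \mapsto \Re(\tau z)$ is continuous on $\SS^1$; moreover, writing $\tau, \tau' \in \SS^1$, one has $|\Re(\tau z) - \Re(\tau' z)| \le |z|\,|\tau - \tau'|$, so this family is uniformly Lipschitz with constant $\sup_{z \in \F_a}|z| =: D_a < \infty$ (finite by compactness of $\F_a$).

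The key step is then the standard argument that a pointwise infimum of a uniformly Lipschitz family is Lipschitz with the same constant. Concretely, fix $\tau, \tau' \in \SS^1$ and pick $z_0 \in \F_a$ attaining $v_a(\tau') = \Re(\tau' z_0)$. Then
$$
v_a(\tau) \le \Re(\tau z_0) \le \Re(\tau' z_0) + D_a |\tau - \tau'| = v_a(\tau') + D_a|\tau - \tau'|.
$$
By symmetry, $|v_a(\tau) - v_a(\tau')| \le D_a|\tau - \tau'|$, which gives continuity (in fact Lipschitz continuity) of $v_a$ on $\SS^1$.

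I do not expect any genuine obstacle here: the only inputs needed are that $\F_a$ is compact (hence bounded and the minimum is attained), which is recorded in the text as a consequence of the results of \cite{geometricalmodels}, and the elementary estimate $|\Re(\tau z) - \Re(\tau' z)| \le |z|\,|\tau-\tau'|$. If one prefers to avoid invoking compactness for attainment, one can instead work with the closure relation $\F_a = \overline{\bigcup_{n\ge 1}\F_a^{(n)}}$ and the fact that $\Re(\tau\,\cdot)$ is continuous, but the compactness route is cleanest. The same argument applies verbatim to each $v_a^{(n)}$, since $\F_a^{(n)} \subseteq \F_a$ is also bounded, though only the statement for $v_a$ is needed here.
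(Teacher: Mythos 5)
Your proof is correct, and it takes a different (and in fact stronger) route than the paper. The paper argues sequentially: for $\tau_n \to \tau$ it extracts, via compactness of $\F_a$, a convergent subsequence of minimizers to show $\liminf_n v_a(\tau_n) \geq v_a(\tau)$, and tests a fixed minimizer for $\tau$ to get $\limsup_n v_a(\tau_n) \leq v_a(\tau)$; this yields plain continuity. You instead observe that $v_a$ is the pointwise minimum of the family $\tau \mapsto \Re(\tau z)$, $z \in \F_a$, which is uniformly Lipschitz with constant $D_a = \max_{z \in \F_a}|z|$ (finite by compactness), and conclude via the standard one-sided estimate $v_a(\tau) \leq \Re(\tau z_0) \leq v_a(\tau') + D_a|\tau - \tau'|$ with $z_0$ a minimizer for $\tau'$, plus symmetry. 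Both arguments use only the compactness of $\F_a$, which the paper has already established; yours is shorter and quantitative, giving Lipschitz continuity with an explicit constant, while the paper's liminf/limsup argument gives exactly what the lemma states and nothing more. Nothing in the rest of the paper requires more than continuity (Lemma \ref{lem:derivatives} later establishes the one-sided derivatives it needs independently), so either proof suffices, but your version would be a legitimate strengthening of the statement.
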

\begin{proof}
Let $(\tau_n)_{n \geq 1}$ be a sequence in $\SS^1$ that converges to $\tau$. Assume first by contradiction that $u = \liminf_{n \to \infty} v_a(\tau_n) < v_a(\tau)$. Let $(n_k)_{k \geq 1}$ be a sequence such that $\lim_{k \to \infty} v_a(\tau_{n_k}) = u$. Let $z_{n_k} \in \F_a$ such that $\Re(\tau_{n_k} z_{n_k}) = v_a(\tau_{n_k})$. Since $\F_a$ is compact, we can assume that $\lim_{k \to \infty} z_{n_k} = z \in \F_a$. We have that $\lim_{k \to \infty} \Re(\tau_{n_k} z_{n_k}) = \Re(\tau z)$, so $\Re(\tau z) = u < v_a(\tau)$, a contradiction. Now, let $z \in \F_a$ such that $\Re(\tau z) = v_a(\tau)$. Clearly $v_a(\tau_n) \leq \Re(\tau_n z)$ and therefore $\limsup_{n \to \infty} v_a(\tau_n) \leq v_a(\tau)$. We obtain that
$$\limsup_{n \to \infty} v_a(\tau_n) \leq v_a(\tau) \leq \liminf_{n \to \infty} v_a(\tau_n),$$
which shows that $\lim_{n \to \infty} v_a(\tau_n) = v_a(\tau)$.
\end{proof}

\subsection{Basic properties of extreme points} 

We will now present some important properties of extreme points that will be used to prove the main theorem. We start with a technical definition.

\begin{defn}
Let $w \in \A^*$ be an itinerary by $T$. A prefix $w'$ of  $w$ is said to be a \emph{minimal prefix} for $w$ and $\gamma$ if 
$$
\Re( \gamma(w') ) = \min\{ \Re( \gamma(w'')) ; w'' \text{ is a prefix of  } w\}.
$$
\end{defn}

\begin{lem}\label{lem:minimo n} 
	Let $n\ge 1$ be an integer, $a\in\A$ and $\tau\in\SS^1$. Let $w$ be a minimal prefix for $\sigma^n(a)$ and the vector $\tau\gamma$. Let $x \in \S_a$ be any sequence that satisfies $w= \sigma^{n-1}(p_1^x)\dots \sigma(p_{n-1}^x)p_{n}^x$. Then $v_a^{(n)}(\beta_0^n\tau) = \Re(\beta_0^n\tau\f_a^{(n)}(x))$.
\end{lem}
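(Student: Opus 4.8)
The statement relates the truncated extreme value $v_a^{(n)}(\beta_0^n\tau)$ of the $n$-th approximation fractal to a specific prefix-decomposition coming from a minimal prefix. My plan is to unwind the definition of $v_a^{(n)}$, express $\Re(\beta_0^n\tau\f_a^{(n)}(x))$ in terms of $\gamma$ evaluated on prefixes of $\sigma^n(a)$, and then match the two using the minimal-prefix condition.

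First I would recall from the discussion after Definition~\ref{def:fractal} (specifically the computation in \eqref{eq:prefixes of sigma^n(a)}) that for any $x\in\S_a$ one has
$$
\f_a^{(n)}(x) = \sum_{m=1}^n \beta^{-m}\gamma(p_m^x) = \frac{1}{\beta^n}\sum_{m=1}^n \gamma(\sigma^{n-m}(p_m^x)) = \frac{1}{\beta^n}\,\gamma\bigl(\sigma^{n-1}(p_1^x)\dotsb\sigma(p_{n-1}^x)p_n^x\bigr),
$$
using \eqref{eq:gamma(sigma^n)}. Thus, writing $w^x := \sigma^{n-1}(p_1^x)\dotsb\sigma(p_{n-1}^x)p_n^x$, which is exactly the prefix of $\sigma^n(a)$ read off from the first $n$ prefix-entries of $x$, we get $\beta^n\f_a^{(n)}(x) = \gamma(w^x)$, and since $\beta = |\beta|\beta_0$, $\beta_0^n\tau\,\f_a^{(n)}(x) = |\beta|^{-n}\tau\gamma(w^x)$. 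Hence $\Re(\beta_0^n\tau\,\f_a^{(n)}(x)) = |\beta|^{-n}\Re(\tau\gamma(w^x))$.

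Next I would identify the range of the map $x\mapsto w^x$ as $x$ varies over $\S_a$. Every prefix of $\sigma^n(a)$ arises this way: given a prefix $w$ of $\sigma^n(a)$, the prefix-suffix structure of the iterated substitution gives a decomposition $w = \sigma^{n-1}(p_1)\dotsb\sigma(p_{n-1})p_n$ with the $p_m$'s prefixes of substitution images, which one extends to a full element of $\S_a$ (filling in the entries beyond index $n$ with empty prefixes, exactly as in the ``$x|_n$'' construction after Definition~\ref{def:fractal}). Conversely every $w^x$ is such a prefix. Therefore
$$
v_a^{(n)}(\beta_0^n\tau) = \min_{x\in\S_a}\Re(\beta_0^n\tau\,\f_a^{(n)}(x)) = |\beta|^{-n}\min\{\Re(\tau\gamma(w')); w'\text{ a prefix of }\sigma^n(a)\}.
$$
By the definition of minimal prefix, the minimum on the right is attained at $w$, i.e.\ it equals $|\beta|^{-n}\Re(\tau\gamma(w))$. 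Now for the given $x$ with $w = w^x$ the identity above reads $\Re(\beta_0^n\tau\,\f_a^{(n)}(x)) = |\beta|^{-n}\Re(\tau\gamma(w))$, which coincides with $v_a^{(n)}(\beta_0^n\tau)$, finishing the proof.

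The only delicate point is the surjectivity claim: that the prefixes $w^x$ realized by $\S_a$ are exactly all prefixes of $\sigma^n(a)$. One direction (every $w^x$ is a prefix) is immediate from $\sigma^n(a) = \sigma^{n-1}(p_1^x)\dotsb p_n^x c_n^x s_n^x\dotsb\sigma^{n-1}(s_1^x)$. The other direction requires checking that an arbitrary prefix of $\sigma^n(a)$ admits a compatible decomposition extendable to $\S_a$; this is a routine unpacking of the nested prefix-suffix decomposition (each $\sigma(c_m) = p_{m+1}c_{m+1}s_{m+1}$), and is the step I would write out most carefully, though it involves no real difficulty.
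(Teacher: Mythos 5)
Your proposal is correct and follows essentially the same route as the paper: both rest on the identity $\beta^n\f_a^{(n)}(x)=\gamma(\sigma^{n-1}(p_1^x)\dotsb p_n^x)$ together with the minimal-prefix property, the only difference being that the paper compares the given $x$ directly with an arbitrary $y\in\S_a$, which requires only the easy inclusion (each word $\sigma^{n-1}(p_1^y)\dotsb p_n^y$ is a prefix of $\sigma^n(a)$), so your surjectivity step is not actually needed — and as stated it holds only for proper prefixes, since a central letter $c_n$ must remain after the prefix. Under the lemma's hypothesis this causes no harm, because the minimal prefix $w$ is assumed to be realized by some $x\in\S_a$, so the minimum over proper prefixes coincides with the minimum over all prefixes and your argument goes through.
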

 
\begin{proof}
	Take $y = (\bar p_m, \bar c_m, \bar s_m)_{m\ge 1}\in\S_a$ and let $w' = \sigma^{n-1}(\bar p_1)\dots \sigma(\bar p_{n-1}) \bar p_{n}$. By hypothesis,  
	$\Re(\tau\gamma(w)) \le \Re(\tau\gamma(w'))$. By using \eqref{eq:gamma(sigma^n)} we obtain
	that
$$
\gamma(w) = \sum_{m = 1}^n \beta^{n - m} \gamma(p_m^x) = \beta^n \sum_{m = 1}^n \beta^{-m} \gamma(p_m^x) = \beta^n \f_a^{(n)}(x).
$$
	Therefore, multiplying by $\tau$, taking real part and multiplying by $|\beta|^{-n}$ we get:
\begin{align*}
\Re( \beta_0^n \tau  \f_a^{(n)}(x))
& =|\beta|^{-n} \Re( \tau \gamma(w)) \\ 
&\leq |\beta|^{-n} \Re(\tau  \gamma(w')) = \Re( \beta_0^n \tau \f_a^{(n)}(y)),
\end{align*}
where in the last equality we have used that the computation for $w'$ is analogous to the one developed for $w$ just above. Since $y$ is arbitrary, the result follows.
\end{proof} 

\begin{lem}[Continuation property]
\label{lem:continuation}
Let $a \in \A$ and $\tau \in \SS^1$. If $x \in \S_a$ satisfies $\f_a(x) \in E_a(\tau)$, then $\f_{c_1^x}(S(x)) \in E_{c_1^x}(\beta_0^{-1}\tau)$, where $S$ is the left shift map. Moreover,
$$\Re(\tau \f_a(x)) = \Re(\tau \beta^{-1} \gamma(p_1^x)) + |\beta|^{-1} v_{c_1^x}(\beta_0^{-1}\tau).$$
Similarly, if $x \in \S_a$ satisfies $\f_a(x) \in E_a^{(n)}(\tau)$ for an integer $n \geq 1$, then we have that $\f_{c_1^x}(S(x)) \in E_{c_1^x}^{(n-1)}(\beta_0^{-1}\tau)$ and
$$\Re(\tau \f_a^{(n)}(x)) = \Re(\tau \beta^{-1} \gamma(p_1^x)) + |\beta|^{-1} v_{c_1^x}^{(n-1)}(\beta_0^{-1}\tau).$$
\end{lem}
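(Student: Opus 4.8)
The statement is a self-similarity/cocycle identity for the fractal $\F_a$, and I would prove it by unpacking the definitions of $\f_a$, $\S_a$, and the left shift $S$, and then using the eigenvalue relation $\gamma(\sigma^n(w)) = \beta^n\gamma(w)$. First I would record the key algebraic observation: if $x = (p_m^x, c_m^x, s_m^x)_{m\ge 1} \in \S_a$, then $S(x) = (p_{m+1}^x, c_{m+1}^x, s_{m+1}^x)_{m\ge 1}$ satisfies $S(x) \in \S_{c_1^x}$, because $\sigma(c_1^x) = p_2^x c_2^x s_2^x$ is exactly the defining condition for $S(x)$ to lie in $\S_{c_1^x}$. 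With this, a direct reindexing gives the basic decomposition
\begin{equation*}
\f_a(x) = \beta^{-1}\gamma(p_1^x) + \sum_{m\ge 2}\beta^{-m}\gamma(p_m^x) = \beta^{-1}\gamma(p_1^x) + \beta^{-1}\f_{c_1^x}(S(x)),
\end{equation*}
and the analogous finite-level identity $\f_a^{(n)}(x) = \beta^{-1}\gamma(p_1^x) + \beta^{-1}\f_{c_1^x}^{(n-1)}(S(x))$. Multiplying by $\tau$, taking real parts and using $\beta^{-1} = |\beta|^{-1}\beta_0^{-1}$, this already yields the displayed formulas \emph{provided} one knows $\f_{c_1^x}(S(x)) \in E_{c_1^x}(\beta_0^{-1}\tau)$ (resp.\ the level-$(n-1)$ version).

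So the real content is the claim that $\f_a(x) \in E_a(\tau)$ forces $\f_{c_1^x}(S(x)) \in E_{c_1^x}(\beta_0^{-1}\tau)$. For this I would argue by minimality. Fix $z' = \f_{c_1^x}(x')$ for an arbitrary $x' \in \S_{c_1^x}$; I want $\Re(\beta_0^{-1}\tau z') \ge v_{c_1^x}(\beta_0^{-1}\tau)$ to be attained at $x' = S(x)$. Build a competitor $y \in \S_a$ by prepending to $x'$ the prefix/center/suffix triple $(p_1^x, c_1^x, s_1^x)$: since $\sigma(a) = p_1^x c_1^x s_1^x$ (because $x\in\S_a$) and $\sigma(c_1^x) = p_1^{x'} c_1^{x'} s_1^{x'}$ (because $x'\in\S_{c_1^x}$), the concatenated sequence $y$ indeed lies in $\S_a$, and by the decomposition above $\f_a(y) = \beta^{-1}\gamma(p_1^x) + \beta^{-1} z'$. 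Then from $\f_a(x)\in E_a(\tau)$,
\begin{equation*}
\Re(\tau\,\beta^{-1}\gamma(p_1^x)) + |\beta|^{-1}\Re(\beta_0^{-1}\tau\,\f_{c_1^x}(S(x))) = \Re(\tau\f_a(x)) \le \Re(\tau\f_a(y)) = \Re(\tau\,\beta^{-1}\gamma(p_1^x)) + |\beta|^{-1}\Re(\beta_0^{-1}\tau z').
\end{equation*}
Cancelling and dividing by $|\beta|^{-1}>0$ gives $\Re(\beta_0^{-1}\tau\,\f_{c_1^x}(S(x))) \le \Re(\beta_0^{-1}\tau z')$ for all $z' \in \F_{c_1^x}$, i.e.\ $\f_{c_1^x}(S(x))$ minimizes $\Re(\beta_0^{-1}\tau\,\cdot\,)$ over $\F_{c_1^x}$, which is precisely $\f_{c_1^x}(S(x)) \in E_{c_1^x}(\beta_0^{-1}\tau)$. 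The displayed equation then follows by evaluating $\Re(\tau\f_a(x))$ through the decomposition and recognizing the last term as $|\beta|^{-1}v_{c_1^x}(\beta_0^{-1}\tau)$.

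The finite-level statement is handled identically: truncating a triple sequence as in the construction of $x|_n$ keeps everything in the relevant $\S$-sets, and the only change is that prepending one triple to a sequence in $\S_{c_1^x}$ that realizes the level-$(n-1)$ value produces a sequence in $\S_a$ realizing a level-$n$ value, so the inequality comparison runs between $v_a^{(n)}$ and $v_{c_1^x}^{(n-1)}$. The step I expect to require the most care is the bookkeeping of the concatenation map $\S_{c_1^x} \to \S_a$, $x' \mapsto y$: one must check that the defining relations $\sigma(c_m^y) = p_{m+1}^y c_{m+1}^y s_{m+1}^y$ hold for \emph{every} $m\ge 1$, which for $m = 1$ is the relation $\sigma(c_1^x) = p_1^{x'}c_1^{x'}s_1^{x'}$ coming from $x'\in\S_{c_1^x}$ and for $m\ge 2$ is just the corresponding relation for $x'$ shifted by one index. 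This is routine but is the place where a careless index shift would break the argument; everything else is the eigenvalue identity plus the minimality comparison above.
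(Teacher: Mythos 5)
Your proposal is correct and follows essentially the same route as the paper: the decomposition $\f_a(x)=\beta^{-1}\gamma(p_1^x)+\beta^{-1}\f_{c_1^x}(S(x))$ together with the concatenation $(p_1^x,c_1^x,s_1^x)x'\in\S_a$ and a comparison against the extremality of $\f_a(x)$, the only cosmetic difference being that you argue directly over all competitors while the paper argues by contradiction with a strictly better one. Your extra care in checking $S(x)\in\S_{c_1^x}$ and the index bookkeeping for the concatenation is exactly the step the paper dismisses as ``clearly,'' so nothing is missing.
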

\begin{proof} Let $x \in \S_a$ as in the hypothesis and assume $\f_{c_1^x}(S(x)) \notin E_{c_1^x}(\beta_0^{-1}\tau)$. Then there exists $y \in \S_{c_1^x}$ such that $\Re(\beta_0^{-1}\tau \f_{c_1^x}(y)) < \Re(\beta_0^{-1}\tau \f_{c_1^x}(S(x)))$. Clearly $(p_1^x, c_1^x, s_1^x) y \in \S_a$. Therefore, from the identities  
$\f_a(x)= \beta^{-1} \gamma(p_1^x) + \beta^{-1} \f_{c_1^x}(S(x))$ and  
$\f_a((p_1^x, c_1^x, s_1^x) y))=\beta^{-1} \gamma(p_1^x) + \beta^{-1} \f_{c_1^x}(y)$, we deduce that
$$\Re(\tau\f_a((p_1^x, c_1^x, s_1^x) y)) < \Re(\tau\f_a(x)),$$
which contradicts the fact that $x$ is an extreme point in $E_a(\tau)$. 

The equality
$$\Re(\tau \f_a(x)) = \Re(\tau \beta^{-1} \gamma(p_1^x)) + |\beta|^{-1} v_{c_1^x}(\beta_0^{-1}\tau)$$
follows directly from the fact that $\f_{c_1^x}(S(x)) \in E_{c_1^x}(\beta_0^{-1}\tau)$. The rest of the proof follows analogously.
 
\end{proof}

\begin{lem}[Exponential approximation]
\label{lem:minimum-exponential}
Let $a \in \A$ and $\tau \in \SS^1$. For all $n \geq 1$ we have 
$|v_{a}^{(n)}(\tau) - v_a(\tau)| \leq C |\beta|^{-n}$, where
$C = \max\{ -v_b(\tau); b \in \A, \tau \in \SS^1 \} < \infty$.
\end{lem}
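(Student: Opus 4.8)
The plan is as follows. Since $\F^{(n)}_a \subseteq \F_a$ and $0 \in \F^{(n)}_a$, we already know $v_a(\tau) \le v^{(n)}_a(\tau) \le 0$, so $v^{(n)}_a(\tau) - v_a(\tau) \ge 0$ and it suffices to establish the upper bound $v^{(n)}_a(\tau) \le v_a(\tau) + C|\beta|^{-n}$. The finiteness of $C$ is immediate: by Lemma~\ref{lem:continuity of v_a} each $v_b$ is continuous on the compact circle $\SS^1$, hence bounded, and $\A$ is finite; moreover $v_b \le 0$ everywhere because $0 \in \F_b$, so in fact $C \ge 0$.

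For the upper bound I would start from a representation $x = (p^x_m, c^x_m, s^x_m)_{m \ge 1} \in \S_a$ attaining the minimum, i.e.\ with $\Re(\tau\f_a(x)) = v_a(\tau)$, which exists because $\F_a$ is compact and $\F_a = \f_a(\S_a)$. Splitting the series defining $\f_a$ at index $n$ and reindexing,
$$\f_a(x) - \f^{(n)}_a(x) = \sum_{m > n} \beta^{-m}\gamma(p^x_m) = \beta^{-n}\sum_{k \ge 1}\beta^{-k}\gamma(p^x_{k+n}) = \beta^{-n}\,\f_{c^x_n}(S^n(x)),$$
where $S$ is the shift on prefix-suffix sequences used in Lemma~\ref{lem:continuation} and $S^n(x) = (p^x_{m+n}, c^x_{m+n}, s^x_{m+n})_{m \ge 1}$ belongs to $\S_{c^x_n}$ by the recursive definition of the sets $\S_b$. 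Since $\f^{(n)}_a(x) \in \F^{(n)}_a$ and $\tau\beta^{-n} = |\beta|^{-n}(\tau\beta_0^{-n})$ with $\tau\beta_0^{-n} \in \SS^1$, this gives
$$v^{(n)}_a(\tau) \le \Re(\tau\f^{(n)}_a(x)) = v_a(\tau) - |\beta|^{-n}\,\Re\!\big((\tau\beta_0^{-n})\,\f_{c^x_n}(S^n(x))\big) \le v_a(\tau) + C|\beta|^{-n},$$
where in the last step I use $\Re\big((\tau\beta_0^{-n})\,\f_{c^x_n}(S^n(x))\big) \ge v_{c^x_n}(\tau\beta_0^{-n}) \ge -C$, which holds by the definition of $v_{c^x_n}$ and of $C$. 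This completes the argument.

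There is essentially no hard step here; the only point requiring care is the identity $\f_a(x) = \f^{(n)}_a(x) + \beta^{-n}\f_{c^x_n}(S^n(x))$ together with the fact that $S^n(x) \in \S_{c^x_n}$, both of which are immediate from the definitions. Alternatively one could avoid the explicit splitting and instead iterate the Continuation Property (Lemma~\ref{lem:continuation}) $n$ times, each application peeling off one term $\beta^{-1}\gamma(p^x_1)$ and replacing $\tau$ by $\beta_0^{-1}\tau$, and then bound the remaining contribution $|\beta|^{-n}v_{c^x_n}(\beta_0^{-n}\tau)$ from below by $-C|\beta|^{-n}$; I would prefer the direct computation above for brevity.
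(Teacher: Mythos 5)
Your proof is correct and follows essentially the same route as the paper's: pick a representation $x \in \S_a$ of an extreme point for $\tau$, use $v_a(\tau) \le v_a^{(n)}(\tau) \le \Re(\tau \f_a^{(n)}(x))$, and bound the tail $\beta^{-n}\f_{c_n^x}(S^n(x))$ by $C|\beta|^{-n}$. The only (immaterial) difference is that the paper identifies the tail exactly as $|\beta|^{-n} v_{c_n^x}(\beta_0^{-n}\tau)$ via the Continuation Property, whereas you simply bound its real part below by $v_{c_n^x}(\beta_0^{-n}\tau) \ge -C$ directly from the definition of $v$, which is a slight but harmless simplification.
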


\begin{proof}
Let $x \in \S_a$ with $\f_a(x) \in E_a(\tau)$. Then, 
$$v_a(\tau)=\Re(\tau \f_a(x)) \leq v_a^{(n)}(\tau) \leq \Re(\tau \f_a^{(n)}(x)).$$
Therefore,
\begin{align*}
|v_a^{(n)}(\tau) - v_a(\tau) | = v_a^{(n)}(\tau) - v_a(\tau)
&\leq -\Re\left( \tau \sum_{m \geq n + 1} \beta^{-m} \gamma(p_m^x) \right) \\ 
&= -|\beta|^{-n} v_{c_n^x}(\beta_0^{-n}\tau),
\end{align*}

where in the last equality we used the Continuation Property of Lemma \ref{lem:continuation}.
\end{proof}

\begin{lem}\label{lem:minimal-not-periodic}
For any $a \in \A$, eventually periodic elements of $\S_a$ do not produce extreme points in 
$\F_a$.
\end{lem}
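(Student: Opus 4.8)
The plan is to argue by contradiction, exploiting the Continuation Property (Lemma~\ref{lem:continuation}) together with the fact that $\beta_0$ is not a root of unity. Suppose that some eventually periodic $x\in\S_a$ satisfies $\f_a(x)\in E_a(\tau)$ for some $\tau\in\SS^1$. First I would iterate Lemma~\ref{lem:continuation}: applying it to $S^k(x)\in\S_{c_k^x}$ with the direction $\beta_0^{-k}\tau$ produces $\f_{c_{k+1}^x}(S^{k+1}(x))\in E_{c_{k+1}^x}(\beta_0^{-(k+1)}\tau)$, using that $c_1^{S^k(x)}=c_{k+1}^x$ and $S(S^k(x))=S^{k+1}(x)$. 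Hence an induction (with base case $n=1$ being a direct application to $x\in\S_a$) gives $\f_{c_n^x}(S^n(x))\in E_{c_n^x}(\beta_0^{-n}\tau)$ for every integer $n\ge 1$.

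Let $q\ge 1$ be an eventual period of the triple sequence defining $x$, so that $(p_{m+q}^x,c_{m+q}^x,s_{m+q}^x)=(p_m^x,c_m^x,s_m^x)$ for all $m\ge N$, and set $n_0=\max\{N,1\}$ and $b=c_{n_0}^x$. Then $S^{n_0+kq}(x)=S^{n_0}(x)$ as elements of $\S_b$ and $c_{n_0+kq}^x=b$ for every $k\ge 0$, so writing $z=\f_b(S^{n_0}(x))$ the previous step yields $z\in E_b(\beta_0^{-n_0}\tau\,\beta_0^{-kq})$ for all $k\ge 0$. Since $\beta_0$, hence $\beta_0^{-q}$, is not a root of unity, $\{\beta_0^{-kq}:k\ge 0\}$ is dense in $\SS^1$, and therefore so is the set of directions $D=\{\beta_0^{-n_0}\tau\,\beta_0^{-kq}:k\ge 0\}$. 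For an arbitrary $\tau'\in\SS^1$, approximating it by elements of $D$ and using $\Re(\varrho z)=v_b(\varrho)$ for $\varrho\in D$ together with the continuity of $v_b$ (Lemma~\ref{lem:continuity of v_a}), one gets $\Re(\tau' z)=v_b(\tau')$; thus $z\in E_b(\tau')$ for \emph{every} $\tau'\in\SS^1$. Consequently, for any $z'\in\F_b$ we have $\Re(\tau'(z-z'))=v_b(\tau')-\Re(\tau' z')\le 0$ for all $\tau'\in\SS^1$, which forces $z=z'$. Hence $\F_b=\{z\}$; since $0\in\F_b$ this means $\F_b=\{0\}$, contradicting $\F_b\neq\{0\}$ established earlier in this section.

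I expect the only delicate part to be the indexing bookkeeping: checking that iterating the Continuation Property really rotates the direction by $\beta_0^{-1}$ at each step and threads the letters $c_n^x$ correctly, and that eventual periodicity of $(p_m^x,c_m^x,s_m^x)_{m\ge 1}$ genuinely produces equal points $\f_b(S^{n_0+kq}(x))=\f_b(S^{n_0}(x))$ lying in the same fractal $\F_b$. All the substantive ingredients — density of $\{\beta_0^{-kq}:k\ge 0\}$ coming from $\beta_0$ not being a root of unity, continuity of $v_b$, and $\F_b\neq\{0\}$ — are already available, so no serious obstacle remains beyond this careful indexing.
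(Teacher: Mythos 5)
Your proof is correct and follows essentially the same route as the paper: both identify the periodic tail $z=\f_b(S^{n_0}(x))$ as an extreme point of $\F_b$ along the dense family of directions $\beta_0^{-n_0-kq}\tau$ via the Continuation Property, then use density (from $\beta_0$ not being a root of unity) and continuity of $v_b$ to contradict $\F_b\neq\{0\}$. The only cosmetic difference is the endgame: the paper first deduces $z=0$ and then $v_b\equiv 0$, while you deduce directly that $z$ is extreme in every direction, forcing $\F_b=\{z\}=\{0\}$ -- the same contradiction.
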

\begin{proof}
Let $x$ be an eventually periodic element of $\S_a$. Then, there exist nonnegative integers $n_0$ and $q$ such that 
for every $k \geq 0$ and $1 \leq m \leq q$ we have
$$
(p_{n_0 + kq + m}^x, c_{n_0 + kq + m}^x, s_{n_0 + kq + m}^x)=(p_m, c_m, s_m).
$$
Assume that $\f_a(x)$ is an extreme point for the direction $\tau \in \SS^1$. 
By definition and periodicity, for every $k \geq 1$, 

\begin{align*}
&\Re(\tau \f_a(x)) \\
{}={}& \Re\left( \tau \sum_{m = 1}^{n_0 + kq} \beta^{-m} \gamma(p_m^x) \right)
+ |\beta|^{-n_0-kq} \Re\left( \beta_0^{-n_0-kq} \tau  \sum_{j \geq 1}\beta^{-j} \gamma(p^x_{n_0+kq+j}) \right) \\
{}={}& \Re\left( \tau \sum_{m = 1}^{n_0 + kq} \beta^{-m} \gamma(p_m^x) \right)
+ |\beta|^{-n_0-kq} \Re\left( \beta_0^{-n_0-kq} \tau  \sum_{j \geq 0}\beta^{-jq} \sum_{m = 1}^{q} \beta^{-m} \gamma(p_m) \right).
\end{align*}
Define $z = \sum_{j \geq 0}\beta^{-jq} \sum_{m = 1}^{q} \beta^{-m} \gamma(p_m)$, then,
$$
\Re(\tau \f_a(x)) = \Re\left( \tau \sum_{m = 1}^{n_0 + kq} \beta^{-m} \gamma(p_m^x) \right) + |\beta|^{-n_0-kq} \Re(\beta_0^{-n_0-kq} \tau z).
$$
By Lemma \ref{lem:continuation}, 
$\Re( \beta_0^{-n_0-kq}\tau z)=v_{c^x_{n_0+kq}}(\beta_0^{-n_0-kq}\tau)=v_{c_{q}}(\beta_0^{-n_0-kq}\tau)$.  Thus $\Re(\beta_0^{-n_0-kq}\tau z) \leq 0$ for any $k\geq 0$. Since $(\beta_0^{-kq})_{k\geq 0}$ is dense in $\SS^1$, we deduce that $z =0$ and consequently $v_{c_{q}}(\beta_0^{-n_0-kq}\tau)=0$ for every $k\geq 0$. Finally, by continuity of $v_a$ (Lemma \ref{lem:continuity of v_a}) and density, $v_{c_{q}}(\xi)=0$ for every $\xi \in \SS^1$. Since $\F_{c_{q}}$ is not reduced to $\{0\}$ we get a contradiction.  
\end{proof}	
We remark that from previous lemma one deduces that $v_a(\tau) < 0$ for any $a \in \A$ and $\tau \in \SS^1$. Indeed, we know that  
$v_a(\tau)\leq 0$, but if $v_a(\tau) = 0$ then we would have that $0 \in \F_a$ is an extreme point, which contradicts the previous lemma.

\subsection{The set \texorpdfstring{$\bm{\Psi}$}{Psi} and extreme points}

\begin{defn}\label{def:a,(pcs)}
Let $a\in\A$ and $(p,c,s)\in\A^*\times\A\times\A^*$ such that $\sigma(a)=pcs $. We denote by $\S_{a, (p,c,s)}$ the set of sequences $x\in \S_a$ starting with $x_1=(p, c, s)$. We define
$$\F_{a,(p,c,s)} =\{\f_a(x); x\in\S_{a,(p,c,s)}\} = \beta^{-1}( \gamma(p) + \F_c ) \subseteq \F_a.$$
For a direction $\tau\in\SS^1$, we define $v_{a,(p,c,s)}(\tau)= \min\{ \Re(\tau z ); z\in \F_{a, (p,c,s)} \}$ and  $E_{a,(p,c,s)}(\tau) = E_a(\tau) \cap \F_{a,(p,c,s)}$.
\end{defn}

It is easy to see that $\F_a = \bigcup\F_{a,(p,c,s)}$ and that $v_a(\tau) =\min\{ v_{a,(p,c,s)}(\tau) \}$, where the union and the minimum are taken over the $(p, c, s) \in \A \times \A^* \times \A$ such that $\sigma(a) = pcs$.
\smallbreak

\begin{defn}\label{def:T_a}
	For each $a \in \A$ we define $\T_a$ as the set of directions
	$\tau\in\SS^1$ for which there exist distinct decompositions $\sigma(a) = pcs=\bar{p}\bar{c}\bar{s}$ in $\A^* \times \A \times \A^*$ with $E_{a,(p,c,s)}(\tau) \neq E_{a,(\bar p, \bar c, \bar s)}(\tau)$ and
	$ v_a(\tau) = v_{a,(p,c,s)(\tau)} = v_{a,(\bar p,\bar c, \bar s)}(\tau)$. 
	That is, $\T_a$ is
	the set of directions for which there exist distinct extreme points in $\F_a$ belonging to distinct subfractals $\F_{a,(p,c,s)}$ and $\F_{a,(\bar p,\bar c, \bar s)}$ (see Figure \ref{fig:Psi}). Put $\T = \bigcup_{a\in\A} \T_a$. 
\end{defn}

\begin{figure}
	\centering
	\includegraphics[width=0.55\textwidth]{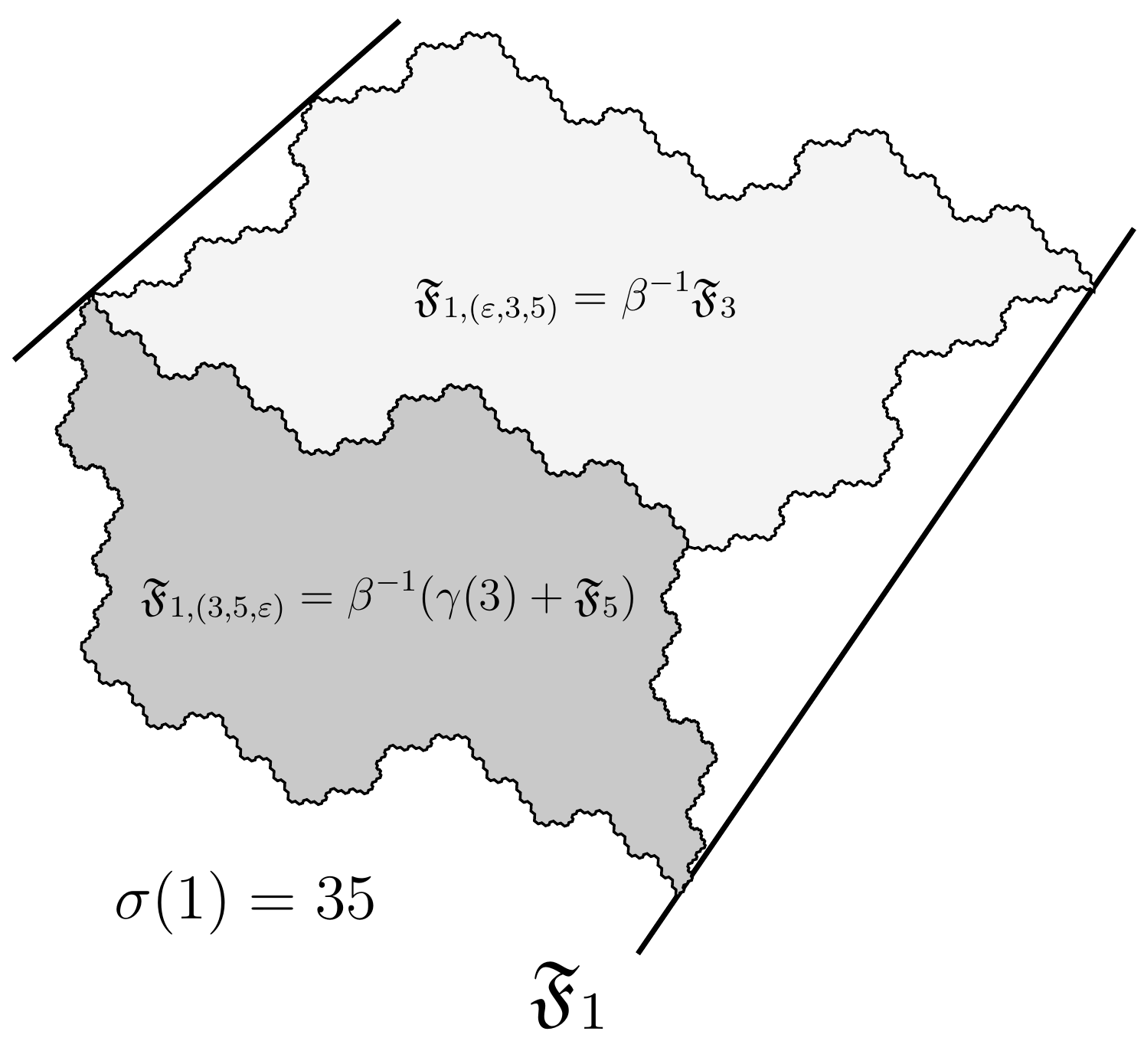}
	\caption{In the cubic Arnoux-Yoccoz map we show directions in $\Psi_1$.}
	\label{fig:Psi}
\end{figure}

\begin{lem}\label{lem:countable}
For each $a \in \A$ the set $\T_a$ is at most countable.
\end{lem}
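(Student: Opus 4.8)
The statement to prove is Lemma~\ref{lem:countable}: for each $a \in \A$, the set $\T_a$ is at most countable. The natural strategy is to show that the points of $\T_a$ are ``isolated'' from the combinatorial viewpoint: each $\tau \in \T_a$ forces a coincidence of two distinct minimal-prefix values, and such coincidences can only happen on a countable set. First I would fix a direction $\tau \in \T_a$ and pick the two distinct decompositions $\sigma(a) = pcs = \bar p\bar c\bar s$ guaranteed by Definition~\ref{def:T_a}, so that $v_a(\tau) = v_{a,(p,c,s)}(\tau) = v_{a,(\bar p,\bar c,\bar s)}(\tau)$ with $E_{a,(p,c,s)}(\tau) \ne E_{a,(\bar p,\bar c,\bar s)}(\tau)$. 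Since there are only finitely many such pairs of decompositions of $\sigma(a)$, it suffices to show that for each fixed pair the set of directions $\tau$ realizing this equality is at most countable; then $\T_a$ is a finite union of such sets.

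So fix a pair of distinct decompositions. The plan is to analyze the function $\varphi(\tau) = v_{a,(p,c,s)}(\tau) - v_{a,(\bar p,\bar c,\bar s)}(\tau)$ on $\SS^1$ and show its zero set is countable. By Lemma~\ref{lem:continuity of v_a} (and the fact that $\F_{a,(p,c,s)} = \beta^{-1}(\gamma(p) + \F_c)$ is compact, so each $v_{a,(p,c,s)}$ is continuous by the same argument as in Lemma~\ref{lem:continuity of v_a}), $\varphi$ is continuous. The key structural input is that $v_{a,(p,c,s)}(\tau) = \Re(\tau \beta^{-1}\gamma(p)) + |\beta|^{-1} v_c(\beta_0^{-1}\tau)$ — this is the identity established in the proof of Lemma~\ref{lem:continuation} — so up to explicit affine-in-$\tau$ terms, $\varphi(\tau)$ is a difference $|\beta|^{-1}(v_c(\beta_0^{-1}\tau) - v_{\bar c}(\beta_0^{-1}\tau))$ plus a function of the form $\Re(\tau w)$ for an explicit $w \in \C$. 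The essential point to exploit is that $v_b$ is a minimum of a family of real-analytic (indeed linear-in-$z$) functions parametrized by the compact fractal, hence $v_b$ is a concave, continuous, piecewise-``support-function'' object; the difference of two such support-type functions has a zero set that is the boundary of a region, and since on an arc where the equality $E_{a,(p,c,s)}(\tau) \ne E_{a,(\bar p,\bar c,\bar s)}(\tau)$ persists one can derive a contradiction with the minimality of $\T$-type coincidences. Concretely, I would argue that if $\tau$ ranged over an uncountable (hence containing an accumulation point, or even an interval's worth) set of solutions, then by a pigeonhole on which sub-subfractals of $\F_c$ and $\F_{\bar c}$ the extreme points lie in — iterating the Continuation Property (Lemma~\ref{lem:continuation}) finitely many times to localize the extreme point into a small fractal piece of diameter $O(|\beta|^{-N})$ — one would eventually force an eventually periodic element of $\S_a$ to produce an extreme point, contradicting Lemma~\ref{lem:minimal-not-periodic}. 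The monotonicity/concavity of $v_b$ combined with the irrational rotation $\tau \mapsto \beta_0\tau$ (recall $\beta_0$ is not a root of unity, so $(\beta_0^{-kq})$ is dense) is what converts ``uncountably many coincidences'' into ``a coincidence persisting along a dense rotation orbit'', which is precisely the kind of configuration ruled out by the argument in Lemma~\ref{lem:minimal-not-periodic}.

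An alternative, possibly cleaner route: show directly that each $\tau \in \T_a$ is a point of non-differentiability of the concave function $\tau \mapsto v_a(\tau)$ (a ``corner'' where two distinct extreme points coexist), and invoke the classical fact that a concave (or convex) function on an interval — here on $\SS^1$ viewed piecewise — has at most countably many points of non-differentiability. For this I would need: (i) that $v_a$ restricted to suitable arcs is concave, which follows since $v_a(\tau) = \min_{z \in \F_a}\Re(\tau z)$ is an infimum of functions that are (after the substitution $\tau = e^{i\theta}$) sinusoidal, hence $v_a(e^{i\theta})$ is, on each arc where a single $z$ realizes the min, of the form $\cos(\theta - \arg z)\cdot|z|$-type and globally a lower envelope of such; and (ii) that the presence of two distinct subfractal extreme points at $\tau$ genuinely produces a kink. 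Step (ii) is where care is needed: two distinct extreme points $z \ne z'$ with $\Re(\tau z) = \Re(\tau z') = v_a(\tau)$ means the vectors $z - z'$ is orthogonal to $\tau$, and as $\tau$ varies the two ``support lines'' separate, creating a corner — unless $z,z'$ are related so that the envelope stays smooth, which the condition $E_{a,(p,c,s)}(\tau) \ne E_{a,(\bar p,\bar c,\bar s)}(\tau)$ (extreme points in genuinely different subfractals) should preclude.

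\textbf{Main obstacle.} The delicate point is ensuring that a coincidence direction $\tau \in \T_a$ is actually a corner of $v_a$ and not merely a flat coincidence that could persist on a whole arc — equivalently, translating Definition~\ref{def:T_a}'s requirement ``extreme points in distinct subfractals'' into genuine non-smoothness of the envelope. I expect this to require combining the self-similar structure ($\F_{a,(p,c,s)} = \beta^{-1}(\gamma(p)+\F_c)$) with the non-degeneracy facts already proved ($\F_b \ne \{0\}$, $v_b(\tau) < 0$ for all $\tau$, and Lemma~\ref{lem:minimal-not-periodic}), most likely by assuming for contradiction that coincidence persists on an arc and then iterating the Continuation Property to propagate the coincidence downward through the substitution until periodicity is forced.
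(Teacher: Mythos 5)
Your plan does not actually close the countability, on either route. In route 1 you pass from ``the zero set of $\varphi$ is uncountable'' to ``it contains an accumulation point, or even an interval's worth'' of solutions; but an uncountable subset of $\SS^1$ need not contain any interval, and a mere accumulation point gives you nothing to propagate: the pigeonhole/continuation/periodicity argument you sketch needs a coincidence persisting on a whole arc, and even granting an arc, the claim that this forces ``a coincidence persisting along a dense rotation orbit'' is unproven --- the Continuation Property relates extreme points of $\F_a$ for the direction $\tau$ to extreme points of \emph{other} letters' fractals for $\beta_0^{-1}\tau$, not to new coincidences for $\F_a$ along the orbit of $\tau$. In route 2, the statement that $v_a$ is concave on suitable arcs is false as written: $\theta\mapsto v_a(e^{i\theta})$ is a lower envelope of sinusoids $\theta\mapsto\Re(e^{i\theta}z)$, which is only \emph{semi}concave (e.g.\ $v_a(e^{i\theta})-\tfrac{M}{2}\theta^2$ with $M=\max_{z\in\F_a}|z|$ is locally a minimum of concave functions, hence concave). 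More importantly, you flag the decisive step --- that every $\tau\in\T_a$ is a genuine corner of $v_a$ --- as the ``main obstacle'' and propose to settle it by the same flawed arc-propagation argument, so the plan as written is not a proof.

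The missing observation is elementary and needs none of the dynamical input (self-similarity, Lemma~\ref{lem:continuation}, Lemma~\ref{lem:minimal-not-periodic}) you invoke. First, by Definition~\ref{def:T_a} and $E_{a,(p,c,s)}(\tau)=E_a(\tau)\cap\F_{a,(p,c,s)}$, every $\tau\in\T_a$ satisfies $|E_a(\tau)|\geq 2$, so it suffices to bound the set of directions with two distinct extreme points. If $z\neq z'$ both lie in $E_a(\tau)$, then $\Re(\tau z)=\Re(\tau z')=v_a(\tau)$ forces $\Im(\tau z)\neq\Im(\tau z')$, so by the one-sided derivative formula of Lemma~\ref{lem:derivatives} (whose proof is independent of Lemma~\ref{lem:countable}) the left and right derivatives of $v_a$ at $\tau$ differ: the corner is automatic, with no need for the subfractal condition. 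Countability then follows from the semiconcavity above (a concave function of one variable has at most countably many non-differentiability points), or, as the paper argues, purely convex-geometrically: pass to $\mathrm{conv}(\F_a)$ and note that a direction with two extreme points makes $\Re(\tau\Phi)$ constant on a nondegenerate open arc of a boundary parametrization $\Phi\colon\SS^1\to\partial\,\mathrm{conv}(\F_a)$; these arcs are pairwise disjoint for distinct directions, and a family of disjoint open arcs in $\SS^1$ is at most countable. Either repair yields the lemma in a few lines, whereas the machinery you propose (pigeonholing into subfractals, forcing eventual periodicity) is both unnecessary and, in the form described, unjustified.
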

\begin{proof}	
Let $a \in \A$. We will prove that the set $\Psi_a' = \{ \tau \in \SS^1; |E_a(\tau)| \geq 2 \}$, which clearly contains $\T_a$, is at most countable.
	
Let $Z \subseteq \C$ and $\tau \in \SS^1$. We define the set of extreme points of $Z$ for the direction $\tau$ as $E_Z(\tau) = \{ z \in Z; \Re(\tau z) = \min \{\Re(\tau z'); z' \in Z \}\}$. Similarly, we define the set of directions in $\SS^1$ for which there exist at least two distinct extreme points as $\T_Z'(\tau) = \{ \tau \in \SS^1; |E_Z(\tau)| \geq 2 \}$. We denote the convex hull of $Z$ by $\mathrm{conv}(Z)$.

We have that $E_a(\tau) = E_{\F_a}(\tau)$, $\T_a'(\tau) = \T_{\F_a}'(\tau)$, $E_{\mathrm{conv}(\F_a)}(\tau) = \mathrm{conv}(E_a(\tau))$ and that $\T_{\mathrm{conv}(\F_a)}' = \T_a'$, since $|E_a(\tau)| \geq 2$ if and only if $|\mathrm{conv}(E_a(\tau))| \geq 2$.

Since $\mathrm{conv}(\F_a)$ is convex and compact, it is either a point, a line segment or homeomorphic to a closed disk. Since in the first two cases $\T_{\mathrm{conv}(\F_a)}'$ is finite, we can assume that the third case holds. Let $\Phi \colon \SS^1 \to \partial \mathrm{conv}(\F_a)$ be an homeomorphism between $\SS^1$ and the boundary of $\mathrm{conv}(\F_a)$. We have that $\tau \in \T_a'$ if and only if the map $\Re(\tau \Phi)\colon \SS^1 \to\R$ is constant on an open interval of $\SS^1$. Since such open intervals must be disjoint for distinct $\tau$'s and a family of disjoint open subsets of $\SS^1$ is at most countable, we obtain that $\T_a'$ is at most countable.
\end{proof}

\subsection{Minimal sequences for \texorpdfstring{$\bm{\gamma}$}{gamma} and limit extreme points} \label{sec:limit-extreme}

Points in $\S_a$ may be constructed by ``reversing'' the prefix-suffix decomposition of elements in $\Omega_T$. Moreover, minimal sequences in $\Omega_T$ for $\gamma$ produce, by reversing its prefix-suffix decomposition, 
extreme points for any direction. We will make this statement precise in this section.

Consider a minimal sequence 
$\omega \in \Omega_T$ for $\gamma$ and let $(p_m, c_m, s_m)_{m \geq 1}$ be its prefix-suffix decomposition. Fix some $\tau\in\SS^1$ and recall that $\beta_0=\beta/|\beta|$ is not a root of unity.
Let $(n_k)_{k \geq 1}$ be an increasing sequence of integers such that $\beta_0^{n_k} \to \tau \in \SS^1$ and $c_{n_k}=a \in \A$ for every $k \geq 1$. For each $k\geq 1$ consider 
a point $x_{n_k} \in \S_a$ starting with   
$$(p_{n_k - 1}, c_{n_k - 1}, s_{n_k - 1})(p_{n_k - 2} c_{n_k - 2}, s_{n_k - 2})\ldots
(p_{0}, c_{0}, s_{0})$$ 
and such that $p_m^{x_{n_k}} = \varepsilon$ for any $m \geq n_k + 1$. This is a sequence obtained by reversing the indexes of finitely many elements of the prefix-suffix decomposition of $\omega$. Assume $(x_{n_k})_{k\geq 1}$ converges to $x_\infty \in \S_a$. We will show that $x_\infty$ is the representation of an extreme point in $\F_a$ for the direction $\tau$. In fact we can prove a little more. For this we need to introduce the following concept.

\begin{defn}\label{def:limit extreme}
	A limit extreme point is an extreme point that has a representation $x\in\S_a$ with the following property: for each $j \geq 1$ there exists $y_j \in\S_{a_j}$ for some $a_j \in\A$ such that
	\begin{enumerate}[label=(\roman{*})]
		\item $y_j$ is a representation of an extreme point for the direction $\beta_0^j\tau$;
		\item $c_j^{y_j}=a$;
		\item $x  = S^{j}(y_j)$, where $S$ is the left shift map on $\bigcup_{a \in \A} \S_a$.
	\end{enumerate}

	The set of limit extreme points will be denoted by $E_a^*(\tau)$.
\end{defn}

\begin{lem}
\label{lem:reverse-trajectory}
Any limit $x_\infty \in \S_a$, constructed as above,  is a representation of a limit extreme point in $E_a^*(\tau)$.
\end{lem}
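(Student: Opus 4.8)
The statement asserts that a limit point $x_\infty$ of the reversed prefix-suffix sequences $(x_{n_k})_{k\ge 1}$, built from a minimal sequence $\omega$ for $\gamma$ along directions $\beta_0^{n_k}\to\tau$ with $c_{n_k}=a$ constant, represents a limit extreme point in the sense of Definition~\ref{def:limit extreme}. The plan is to verify the three conditions (i)--(iii) of that definition by exhibiting, for each fixed $j\ge 1$, an appropriate $y_j\in\S_{a_j}$ obtained from the \emph{same} tail of the prefix-suffix decomposition of $\omega$ but cut $j$ steps earlier. Concretely, for $k$ large enough that $n_k>j$, one sets $a_j=c_{n_k}=a$ only after passing to a subsequence along which $c_{n_k-j}$ is also eventually constant (possible since $\A$ is finite); call this value $a_j$. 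Then define $y_{j,k}\in\S_{a_j}$ to start with the reversed block $(p_{n_k-j-1},c_{n_k-j-1},s_{n_k-j-1})\ldots(p_0,c_0,s_0)$ and with $p^{y_{j,k}}_m=\varepsilon$ for $m\ge n_k-j+1$, and pass to a further subsequence so that $y_{j,k}\to y_j\in\S_{a_j}$ as $k\to\infty$. By construction $c^{y_j}_j=a$ (this is condition (ii)) and $S^j(y_j)=x_\infty$ (condition (iii)), since shifting $y_{j,k}$ by $j$ exactly recovers $x_{n_k}$ and the limits are compatible after a diagonal extraction over $j$.

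The content of the lemma is therefore condition (i): each $y_j$ must represent an extreme point of $\F_{a_j}$ for the direction $\beta_0^j\tau$. First I would establish the base case $j=0$ style statement, namely that $x_\infty$ itself represents an extreme point of $\F_a$ for direction $\tau$. The key computation is to compare, for each finite $N$, the truncated value $\f_a^{(N)}(x_{n_k})$ with the corresponding minimal-prefix value. Because $\omega$ is a minimal sequence for $\gamma$, Lemma~\ref{lem:minimal-leq} (applied to the word $\sigma^{n_k}(\omega_{-1})$ read as $\omega_{-m}\ldots\omega_{-1}$, whose minimal-prefix point is exactly where the dot sits) guarantees that the prefix $\sigma^{n_k-1}(p_1)\ldots p_{n_k}$ realizing $x_{n_k}$ is a \emph{minimal prefix} of $\sigma^{n_k}(a)$ for the vector $\beta_0^{n_k}\tau$, or rather for $\tau\gamma$ after the appropriate rescaling. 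Then Lemma~\ref{lem:minimo n} gives $v_a^{(n_k)}(\beta_0^{n_k}\tau)=\Re(\beta_0^{n_k}\tau\,\f_a^{(n_k)}(x_{n_k}))$. Taking $k\to\infty$: the left side converges to $v_a(\tau)$ by continuity of $v_a$ (Lemma~\ref{lem:continuity of v_a}) together with the exponential approximation $|v_a^{(n)}-v_a|\le C|\beta|^{-n}$ (Lemma~\ref{lem:minimum-exponential}); the right side converges to $\Re(\tau\,\f_a(x_\infty))$ by continuity of $\f_a$ and the facts that $\beta_0^{n_k}\to\tau$ and $\f_a^{(n_k)}(x_{n_k})\to\f_a(x_\infty)$ (the truncations converge to the full sum because the tail is geometrically small). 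Hence $\Re(\tau\,\f_a(x_\infty))=v_a(\tau)$, i.e.\ $\f_a(x_\infty)\in E_a(\tau)$.

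For the general $j$, I would run the identical argument with $n_k$ replaced by $n_k-j$ and $\tau$ replaced by $\beta_0^j\tau$: the same minimality of $\omega$ shows that the reversed block of length $n_k-j$ realizes a minimal prefix of $\sigma^{n_k-j}(a_j)$ for the appropriate direction, Lemma~\ref{lem:minimo n} identifies $v_{a_j}^{(n_k-j)}(\beta_0^{n_k-j}\cdot\beta_0^j\tau)=v_{a_j}^{(n_k-j)}(\beta_0^{n_k}\tau)$ with the real part evaluated at $y_{j,k}$, and passing to the limit via Lemmas~\ref{lem:continuity of v_a} and~\ref{lem:minimum-exponential} yields $\f_{a_j}(y_j)\in E_{a_j}(\beta_0^j\tau)$. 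This is condition (i). Alternatively — and perhaps more cleanly — one can obtain (i) for all $j$ at once from the Continuation Property (Lemma~\ref{lem:continuation}) applied inductively backwards: if $S^j(y_j)=x_\infty$ represents an extreme point for $\tau$ and $y_j$ is the ``one-step parent,'' then $\f_{a_j}(y_j)\in E_{a_j}(\beta_0^j\tau)$ would follow from extremality propagating upward; however, Lemma~\ref{lem:continuation} propagates extremality \emph{downward} (from $x$ to $S(x)$), not upward, so the direct minimal-prefix argument is the safer route and is the main obstacle: one must be careful that the \emph{same} subsequence of $k$'s works simultaneously for the convergence $x_{n_k}\to x_\infty$, for all the auxiliary convergences $y_{j,k}\to y_j$, and for the constancy of $c_{n_k-j}$ over $j$; a diagonal argument over $j$ handles this, after which the compatibility $S^j(y_j)=x_\infty$ is automatic from the definitions of the $y_{j,k}$ and $x_{n_k}$ as truncations of one another.
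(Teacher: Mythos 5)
Your base-case argument --- that $x_\infty$ itself represents an extreme point in $E_a(\tau)$ via Lemma~\ref{lem:minimal-leq}, Lemma~\ref{lem:minimo n} with $\tau=1$, and a passage to the limit along $\beta_0^{n_k}\to\tau$ --- is correct and is exactly the paper's computation. The gap is in the construction of the auxiliary sequences $y_j$: you build $y_{j,k}$ by \emph{truncating} the reversal $j$ steps earlier (indices $n_k-j-1,\dotsc,0$), but this produces $S^j(x_{n_k})$, i.e.\ a descendant of $x_{n_k}$, not an ancestor. Concretely, with your definition one has $S^j(x_{n_k})=y_{j,k}$ rather than $S^j(y_{j,k})=x_{n_k}$ (shifting deletes entries, it cannot restore the $j$ entries you removed), so condition~(iii) of Definition~\ref{def:limit extreme} fails in the limit; moreover $c_j^{y_{j,k}}=c_{n_k-2j}$, which is not $a$ in general, so condition~(ii) fails; and the minimal-prefix argument applied to $n_k-j$ reversed elements yields extremality for the direction $\lim_k\beta_0^{n_k-j}=\beta_0^{-j}\tau$, not $\beta_0^{j}\tau$ as condition~(i) requires (your displayed identity $v_{a_j}^{(n_k-j)}(\beta_0^{n_k-j}\cdot\beta_0^j\tau)=v_{a_j}^{(n_k-j)}(\beta_0^{n_k}\tau)$ silently inserts the factor $\beta_0^{j}\tau$, which the minimality of $\omega$ for $\gamma$ does not provide). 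You even note yourself that the Continuation Property (Lemma~\ref{lem:continuation}) only propagates extremality downward, from $x$ to $S(x)$; that is precisely why descendants of $x_\infty$ can never certify the limit-extreme-point property, which is a statement about its ancestors.

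The repair is to go in the opposite direction, as the paper does: since the prefix-suffix decomposition of $\omega$ is infinite, for each $j\geq 0$ reverse its first $n_k+j$ elements, obtaining $y_{n_k}^j\in\S_{c_{n_k+j}}$ starting with $(p_{n_k+j-1},c_{n_k+j-1},s_{n_k+j-1})\dotsm(p_0,c_0,s_0)$ and with empty prefixes beyond. Then $S^j(y_{n_k}^j)=x_{n_k}$ and the $j$-th entry of $y_{n_k}^j$ is $(p_{n_k},c_{n_k},s_{n_k})$, so $c_j^{y_{n_k}^j}=c_{n_k}=a$; after passing to a subsequence so that $c_{n_k+j}=a_j$ is constant and $y_{n_k}^j\to y_j$, conditions (ii) and (iii) hold for $y_j$, and the same minimality argument (Lemmas~\ref{lem:minimal-leq} and~\ref{lem:minimo n} applied to $\sigma^{n_k+j}(a_j)$) gives $v_{a_j}^{(n_k+j)}(\beta_0^{n_k+j})=\Re\bigl(\beta_0^{n_k+j}\f_{a_j}(y_{n_k}^j)\bigr)$, whence in the limit $\f_{a_j}(y_j)\in E_{a_j}(\beta_0^{j}\tau)$ because $\beta_0^{n_k+j}\to\beta_0^{j}\tau$. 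Your diagonal-extraction remark is then the right bookkeeping, but it must be applied to these extended sequences, not to the truncated ones.
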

\begin{proof}
Assume the sequence constructed as before $(x_{n_k})_{k \geq 1}$ converges to $x_\infty \in \S_a$. For each integer $j \geq 0$, let $y_{n_k}^j$ be the sequence in $\S_{c_{n_k + j}}$ starting by
$$(p_{n_k + j - 1}, c_{n_k + j - 1}, s_{n_k + j - 1})(p_{n_k + j - 2} c_{n_k + j - 2}, s_{n_k + j - 2})\ldots
(p_{0}, c_{0}, s_{0}),$$
and such that $p_m^{y_{n_k}^j} = \varepsilon$ for every $m \geq n_k + j + 1$. That is, $y_{n_k}^j$ is the sequence obtained by reversing the first $n_k+j$ elements of the prefix-suffix decomposition $(p_m, c_m, s_m)_{m\ge 0}$ of $\omega$. Clearly, $S^j(y_{n_k}^j) = x_{n_k}$. By taking a subsequence we may assume that $c_{n_k+j} = a_j$ for every $k \geq 1$ and that $y_{n_k}^j$ converges to $y_j$ when $k \to \infty$. We clearly have that $c_j^{y_j}=a$ and $S^j(y_j) = x_\infty$.

We claim that both $x_\infty$ and $y_j$ are representations of extreme points for the letters $a$ and $a_j$ and the directions $\tau$ and $\beta_0^j \tau$ respectively. Indeed, since $\omega$ is a minimal sequence for $\gamma$, by Lemma \ref{lem:minimal-leq} we have that $x_{n_k}$ satisfies the hypotheses of Lemma \ref{lem:minimo n} (taking $\tau=1$ in the lemma), so $v_a^{(n_k)}(\beta_0^{n_k}) = \Re(\beta_0^{n_k} \f_a(x_{n_k}))$. Thus for every $x\in \S_a$ we have that 
$\Re(\beta_0^{n_k} \f_a(x_{n_k})) \leq \Re(\beta_0^{n_k} \f_a^{(n_k)}(x))$. 
Finally, taking the limit when $k \to \infty$ we get that 
$\Re(\tau \f_a(x_{\infty}))\leq \Re(\tau \f_a(x))$ for any $x\in \S_a$, so 
$x_\infty$ is an extreme point in $E_a(\tau)$. The proof for $y_j$ is similar.
\end{proof}
 
The previous lemma gives us the following important corollary:
\begin{cor}\label{cor:many limit extreme}
For every $\tau \in \SS^1$ there exists $a \in \A$ such that $E_a^*(\tau)$ is nonempty. Moreover, if $E_a^*(\tau)$ is nonempty for some $a \in \A$ and $\tau \in \SS^1$, then 
$E_a^*(\tau) = E_a(\tau)$
\end{cor}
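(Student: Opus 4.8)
The plan is to read off both assertions from the construction that precedes Lemma~\ref{lem:reverse-trajectory} together with that lemma. For the existence claim I would fix $\tau \in \SS^1$, take a minimal sequence $\omega \in \Omega_T$ for $\gamma$ (which exists by Lemma~\ref{lem:existence of minimal points}) with prefix-suffix decomposition $(p_m,c_m,s_m)_{m\ge 1}$, and use that $\beta_0=\beta/|\beta|$ is not a root of unity, so that $\{\beta_0^m : m\ge 1\}$ is dense in $\SS^1$; since $\SS^1$ has no isolated points there is a strictly increasing sequence $m_k\to\infty$ with $\beta_0^{m_k}\to\tau$. As $\A$ is finite, some letter $a\in\A$ satisfies $c_{m_k}=a$ for infinitely many $k$, and discarding the remaining indices yields an increasing sequence $(n_k)_{k\ge 1}$ with $c_{n_k}=a$ and $\beta_0^{n_k}\to\tau$. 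This is exactly the hypothesis of the construction in Section~\ref{sec:limit-extreme}: forming the points $x_{n_k}\in\S_a$ there and passing to a convergent subsequence --- legitimate since $\S_a$ is a closed, hence compact, subset of a product of finite sets --- Lemma~\ref{lem:reverse-trajectory} tells me that the limit $x_\infty\in\S_a$ represents a point of $E_a^*(\tau)$, so $E_a^*(\tau)\ne\emptyset$.

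For the equality, the inclusion $E_a^*(\tau)\subseteq E_a(\tau)$ is immediate from Definition~\ref{def:limit extreme}. For the reverse inclusion I would assume $E_a^*(\tau)\ne\emptyset$, fix a limit extreme point $z_\infty\in E_a^*(\tau)$ with a representation $x_\infty\in\S_a$ and witnesses $y_j\in\S_{a_j}$, $j\ge 1$, as in Definition~\ref{def:limit extreme}, and take an arbitrary $z\in E_a(\tau)$ with some representation $x\in\S_a$. The key step is a grafting construction: for each $j\ge 1$ let $\tilde y_j\in\S_{a_j}$ be the sequence whose first $j$ terms coincide with those of $y_j$ and whose tail from index $j+1$ onward equals $x$. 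Since $c_j^{y_j}=a$ and $x\in\S_a$, all the relations $\sigma(c_m^{\tilde y_j})=p_{m+1}^{\tilde y_j}c_{m+1}^{\tilde y_j}s_{m+1}^{\tilde y_j}$ hold, so $\tilde y_j\in\S_{a_j}$, and properties (ii) and (iii) of Definition~\ref{def:limit extreme} hold for $\tilde y_j$ by construction. A short computation gives $\f_{a_j}(\tilde y_j)-\f_{a_j}(y_j)=\beta^{-j}\bigl(\f_a(x)-\f_a(x_\infty)\bigr)$; multiplying by $\beta_0^j\tau$, taking real parts and using $\beta_0^j\beta^{-j}=|\beta|^{-j}$, the extra term equals $|\beta|^{-j}\Re\bigl(\tau(z-z_\infty)\bigr)$, which vanishes because $z$ and $z_\infty$ both realize $v_a(\tau)=\min_{w\in\F_a}\Re(\tau w)$. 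Hence $\f_{a_j}(\tilde y_j)\in E_{a_j}(\beta_0^j\tau)$, so property (i) holds as well, and $x$ witnesses $z\in E_a^*(\tau)$.

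I expect the only genuinely nontrivial point to be this last observation: the extremality of $z$ and of $z_\infty$ for the \emph{same} direction $\tau$ is exactly what forces the grafting correction $\Re(\tau(z-z_\infty))$ to be zero, so that grafting an arbitrary extreme representation onto the backward tower of $z_\infty$ preserves extremality for every direction $\beta_0^j\tau$. The remaining ingredients --- density of $\{\beta_0^m\}$ in $\SS^1$, compactness of $\S_a$, checking the $\S$-consistency relations after grafting, and the geometric-series manipulation --- should be routine.
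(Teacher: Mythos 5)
Your proposal is correct and follows essentially the same route as the paper: existence is obtained by reversing the prefix--suffix decomposition of a minimal sequence along a subsequence with $\beta_0^{n_k}\to\tau$ and invoking Lemma~\ref{lem:reverse-trajectory}, and the equality $E_a^*(\tau)=E_a(\tau)$ is proved by the same grafting of an arbitrary extreme representation onto the backward tower of witnesses $y_j$. The only difference is that you spell out the "direct computation" (the cancellation $\Re(\tau(z-z_\infty))=0$ and the verification that the grafted sequences lie in $\S_{a_j}$) which the paper leaves implicit.
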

\begin{proof}
Take a minimal sequence $\omega \in \Omega_T$ and $\tau\in\SS^1$. By reversing the prefix-suffix decomposition of 
$\omega$ as above on an appropriate sequence $(n_k)_{k \geq 1}$ such that $\beta_0^{n_k}\to \tau$, we obtain by previous lemma that $E_a^*(\tau) \neq \varnothing$ for some $a \in \A$.	
	Now, if $E_a^*(\tau)$ is nonempty for some $a \in \A$ and $\tau \in \SS^1$, let $x \in \S_a$ be a representation of a limit extreme point in $E_a^*(\tau)$, for every $j\geq 1$ let $y_j \in \S_{a_j}$ be the sequence from the definition of limit extreme point and let $x' \in \S_a$ be the representation of some extreme point in $E_a(\tau)$. For every $j\geq 1$ we define $y_j' \in \S_a$ to coincide with $y_j$ in its first $j$ coordinates and with $x'$ in the rest of its coordinates. A direct computation shows that $y_j' \in \S_{a_j}$ and that it is the representation of an extreme point in $E_{a_j}(\beta_0^j \tau)$ such that $S^j(y_j') = x'$ and $c_j^{y'_j}=a$.
\end{proof}
Even though for every $a \in \A$ and $\tau \in \SS^1$ one has that $E_a(\tau)$ is nonempty, in general $E_a^*(\tau)$ may be empty. Indeed, the Arnoux-Yoccoz fractals contain extreme points that are not limit extreme points.

Another interesting consequence of Lemma \ref{lem:reverse-trajectory} is the following:

\begin{lem}\label{lem:no periodic is minimal}
If $\omega \in \Omega_T$ has an eventually periodic prefix-suffix decomposition, then it cannot be minimal for any eigenvector $\gamma$ associated with $\beta$. 
\end{lem}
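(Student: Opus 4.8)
The plan is to argue by contradiction using the structure of limit extreme points established in Lemma~\ref{lem:reverse-trajectory} together with the ``no periodic representation'' result of Lemma~\ref{lem:minimal-not-periodic}. Suppose $\omega \in \Omega_T$ has an eventually periodic prefix-suffix decomposition $(p_m, c_m, s_m)_{m \geq 0}$ and, for contradiction, that $\omega$ is a minimal sequence for some eigenvector $\gamma$ associated with $\beta$. Let $n_0$ and $q$ be the preperiod and period of the decomposition, so that $(p_{m+q}, c_{m+q}, s_{m+q}) = (p_m, c_m, s_m)$ for all $m \geq n_0$.

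First I would extract a suitable direction $\tau$: since $\beta_0 = \beta/|\beta|$ is not a root of unity, the orbit $(\beta_0^{n_0 + kq})_{k \geq 0}$ is infinite, so it has a convergent subsequence $\beta_0^{n_0 + k_j q} \to \tau \in \SS^1$. Along this subsequence the letter $c_{n_0 + k_j q}$ is constant (equal to $c_{n_0}$ by periodicity), so the hypotheses of the construction preceding Lemma~\ref{lem:reverse-trajectory} are met. Reversing the prefix-suffix decomposition of $\omega$ along the indices $n_k = n_0 + k_j q$ and passing to a further subsequence, we obtain a limit $x_\infty \in \S_{c_{n_0}}$ which, by Lemma~\ref{lem:reverse-trajectory}, is a representation of a (limit) extreme point in $E_{c_{n_0}}(\tau)$.

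The key step is to observe that this $x_\infty$ is eventually periodic as an element of $\S_{c_{n_0}}$. Indeed, $x_\infty$ is obtained by reversing the indices of the prefix-suffix decomposition of $\omega$: for each fixed coordinate position, the corresponding entry of $x_{n_k}$ stabilizes as $k \to \infty$, and the stabilized value at position $\ell$ is $(p_{n_k - \ell}, c_{n_k - \ell}, s_{n_k - \ell})$ once $n_k - \ell \geq n_0$; since $n_k = n_0 + k_j q$ runs through an arithmetic progression of step $q$ and the decomposition of $\omega$ is periodic with period $q$ beyond index $n_0$, the resulting reversed sequence $x_\infty$ is periodic with period $q$ from some coordinate onward (with the tail $p_m^{x_\infty} = \varepsilon$ absorbed into the periodic pattern exactly as in the truncation construction of Section~\ref{sec:fractals}). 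But then $x_\infty$ is an eventually periodic element of $\S_{c_{n_0}}$ that is a representation of an extreme point of $\F_{c_{n_0}}$, directly contradicting Lemma~\ref{lem:minimal-not-periodic}. This contradiction shows $\omega$ cannot be minimal.

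The main obstacle I anticipate is the bookkeeping needed to make the ``$x_\infty$ is eventually periodic'' claim fully rigorous: one must track carefully how the reversal of indices interacts with the preperiod $n_0$ and the period $q$, and verify that the convention $p_m^{x_{n_k}} = \varepsilon$ for $m \geq n_k + 1$ does not obstruct periodicity in the limit (the point is that reversing sends the ``tail at index $n_k+1$'' further and further out, so in the limit it disappears and only the periodic block survives). Once this is checked, the rest is an immediate invocation of Lemmas~\ref{lem:reverse-trajectory} and~\ref{lem:minimal-not-periodic}. An alternative route avoiding the limit argument would be to compute $\gamma_n(\omega)$ directly for the periodic decomposition and show $\Re(\gamma_n(\omega))$ must change sign, using $\gamma(\sigma^n(w)) = \beta^n \gamma(w)$ and the fact that $\Re(\beta_0^n z)$ is dense-sign-changing for $z \neq 0$; but the contradiction-via-extreme-points approach is cleaner and reuses the machinery already developed.
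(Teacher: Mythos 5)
Your proposal is correct and is essentially the paper's own argument: the paper likewise reverses the eventually periodic prefix-suffix decomposition along a subsequence with $\beta_0^{n_k}$ convergent to obtain a limit extreme point with an eventually periodic representation, contradicting Lemma~\ref{lem:minimal-not-periodic}. Your additional bookkeeping (choosing indices in the arithmetic progression $n_0+kq$ and checking coordinatewise stabilization of the reversed sequences) just fills in details the paper leaves implicit.
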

\begin{proof}
If it were, then using the reversing procedure described above Lemma \ref{lem:reverse-trajectory} we could construct a limit extreme point with periodic representation. This contradicts Lemma \ref{lem:minimal-not-periodic}.
\end{proof}

\section{Unique representation property}\label{sec:URP}
We continue with notations of previous sections. So, $T$ is a self-similar interval exchange map, which is self-induced on the interval $[0,\alpha)$, and $\beta$ is an eigenvalue of $M$ with $|\beta| > 1$ such that $\beta/|\beta|$ is not a root of unity. Consider an eigenvector $\gamma$ of $M$ associated to $\beta$. 
Recall that $x\in\S_a$ is a representation of $z \in \F_a$ if $z=\f_a(x)$.

\begin{defn}
We say that $T$ satisfies the \emph{unique representation property} (u.r.p.)\ for $\beta$ and the eigenvector $\gamma$ if every extreme point of the associated fractals has a unique representation. We say that $T$ satisfies the \emph{weak unique representation property} (weak u.r.p.)\ for $\beta$ and the eigenvector $\gamma$ if every limit extreme point of the associated fractals has a unique representation.
\end{defn}

By Corollary \ref{cor:many limit extreme}, if $T$ satisfies the weak u.r.p.\ for $\beta$ and the eigenvector $\gamma$, then for all $a \in \A$ and $\tau \in \SS^1$ such that $E_a^*(\tau)$ in nonempty, we have that every extreme point of $\F_a$ for the direction $\tau$ has a unique representation. Indeed, in this case $E_a(\tau) = E_a^*(\tau)$. In particular, if $\sigma(a) = pcs = \bar{p}\bar{c}\bar{s}$ for distinct decompositions in $\A^* \times \A \times \A^*$, then $E_{a, (p,c,s)}(\tau) \cap E_{a, (\bar p,\bar c,\bar s)}(\tau)$ is empty.

Before continuing we need to comment the dependence of the previous concepts on the eigenvector $\gamma$. Until now, we have fixed an eigenvector $\gamma$ associated with $\beta$ and defined
the fractal set $\F_a $, its extreme and limit extreme points for a given direction $E_a(\tau)$ and  $E^*_a(\tau)$, and the directions with extreme points in distinct subfractals $\T_a$. These objects clearly depend on the choice of $\gamma$. We will temporarily make this dependence explicit by writing $\F_a(\gamma)$, $E_a(\gamma, \tau)$, $E^*_a(\gamma, \tau)$ and $\T_a(\gamma)$ respectively. Our main concern is to see how these concepts vary in the one dimensional space generated by $\gamma$. The following relations follow easily from definitions:

\begin{lem}\label{lem:homotecy}
	Let $\gamma, z\gamma \in \C^\A$ be eigenvectors associated with $\beta$, with $z \in \C \setminus \{0\}$. Let $a \in \A$ and $\tau \in \SS^1$. Then,
	\begin{enumerate}[label=(\roman{*})]
		\item $\F_a(z\gamma) = z \F_a(\gamma)$;
		\item $E_a(z\gamma, \tau) = z E_a(\gamma, z_0 \tau)$, $E^*_a(z\gamma, \tau) = z E^*_a(\gamma, z_0 \tau)$;
		\item $\T_a(z\gamma) = z_0^{-1} \T_a(\gamma)$,
	\end{enumerate}
	where $z_0 = z/|z|$.
\end{lem}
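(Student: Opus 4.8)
The plan is to reduce the whole statement to two elementary observations and then read off the three identities directly from the definitions. The first observation is that the combinatorial data underlying all the objects in question --- the set $\S_a$, the finitely many prefixes and suffixes of the words $\sigma(b)$ with $b\in\A$, the matrix $M$ and the eigenvalue $\beta$, hence $\beta_0 = \beta/|\beta|$ --- do not depend on the choice of eigenvector; $\gamma$ enters only through the assignment $w\mapsto\gamma(w)$ of Definition~\ref{def:sumas_gamma}, which is $\C$-linear in $\gamma$, so that $(z\gamma)(w) = z\,\gamma(w)$ for every word $w$. Substituting this into the series defining $\f_a$ gives, writing $\f_a^{\gamma}$ for the map $\f_a$ built from $\gamma$, the pointwise identity $\f_a^{z\gamma}(x) = z\,\f_a^{\gamma}(x)$ for every $x\in\S_a$ (and likewise for $\f_a^{(n)}$ and for the subfractal maps of Definition~\ref{def:a,(pcs)}). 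Since $\S_a$ is the same for both eigenvectors, this is exactly item~(i): $\F_a(z\gamma) = z\,\F_a(\gamma)$, and also $\F_a^{(n)}(z\gamma) = z\,\F_a^{(n)}(\gamma)$ and $\F_{a,(p,c,s)}(z\gamma) = z\,\F_{a,(p,c,s)}(\gamma)$.

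For item~(ii), the second observation is the polar decomposition $z = |z|\,z_0$ with $z_0 = z/|z|\in\SS^1$ together with the identity $\Re(\tau z w) = |z|\,\Re\bigl((z_0\tau)\,w\bigr)$, valid for all $w\in\C$ since $|z_0| = 1$. Feeding this into the minimization of Definition~\ref{def:extremepoints} over $\F_a(z\gamma) = z\,\F_a(\gamma)$ gives $v_a(z\gamma,\tau) = |z|\,v_a(\gamma, z_0\tau)$ (note $z_0\tau\in\SS^1$), and the same computation over the subfractals gives $v_{a,(p,c,s)}(z\gamma,\tau) = |z|\,v_{a,(p,c,s)}(\gamma, z_0\tau)$. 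Hence a point $zw'$ with $w'\in\F_a(\gamma)$ attains the minimum for $(z\gamma,\tau)$ if and only if $w'$ attains it for $(\gamma, z_0\tau)$, which says exactly $E_a(z\gamma,\tau) = z\,E_a(\gamma, z_0\tau)$; intersecting with the (rescaled) subfractals then also gives $E_{a,(p,c,s)}(z\gamma,\tau) = z\,E_{a,(p,c,s)}(\gamma, z_0\tau)$. For the limit extreme points I would just unwind Definition~\ref{def:limit extreme}: its combinatorial content --- that $x\in\S_a$ is a representation and, for each $j\geq 1$, there is a witness $y_j\in\S_{a_j}$ for some $a_j\in\A$ with $c_j^{y_j}=a$ and $x = S^j(y_j)$ --- is entirely $\gamma$-free, while the remaining membership conditions, that $\f_a^{z\gamma}(x)$ lies in $E_a(z\gamma,\tau)$ and each $\f_{a_j}^{z\gamma}(y_j)$ lies in $E_{a_j}(z\gamma,\beta_0^j\tau)$, become by the identities just proved the corresponding conditions for $\f_a^{\gamma}(x)\in E_a(\gamma,z_0\tau)$ and $\f_{a_j}^{\gamma}(y_j)\in E_{a_j}(\gamma,\beta_0^j(z_0\tau))$. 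Thus the same data witness both statements and $E_a^*(z\gamma,\tau) = z\,E_a^*(\gamma, z_0\tau)$.

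For item~(iii) I would insert the scalings of $v$ and $E$ into Definition~\ref{def:T_a}: a direction $\tau$ lies in $\T_a(z\gamma)$ iff there are distinct decompositions $\sigma(a) = pcs = \bar p\bar c\bar s$ with $E_{a,(p,c,s)}(z\gamma,\tau)\neq E_{a,(\bar p,\bar c,\bar s)}(z\gamma,\tau)$ and $v_a(z\gamma,\tau) = v_{a,(p,c,s)}(z\gamma,\tau) = v_{a,(\bar p,\bar c,\bar s)}(z\gamma,\tau)$; cancelling the nonzero factor $z$ in the inequality of sets and the positive factor $|z|$ in the chain of equalities turns this into the same conditions for $\gamma$ and the rotated direction $z_0\tau$, i.e.\ $z_0\tau\in\T_a(\gamma)$, so $\T_a(z\gamma) = z_0^{-1}\,\T_a(\gamma)$. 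I do not expect a genuine obstacle: the only point requiring care is to keep the polar decomposition $z = |z|z_0$ consistent throughout --- the modulus $|z|$ uniformly rescales all the values $v_a$ and $v_{a,(p,c,s)}$ while the unit $z_0$ rotates the direction --- together with the remark that $\S_a$ and $\beta_0$ carry no dependence on $\gamma$, which is what lets the combinatorial witnesses of extreme and limit extreme points transfer verbatim between $\gamma$ and $z\gamma$.
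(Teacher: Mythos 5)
Your proposal is correct and follows exactly the route the paper intends: the paper gives no written proof (it asserts the relations ``follow easily from definitions''), and your argument is precisely that straightforward unwinding --- linearity of $w\mapsto\gamma(w)$ in $\gamma$ giving $\f_a^{z\gamma}=z\,\f_a^{\gamma}$ on the $\gamma$-independent set $\S_a$, the polar decomposition $z=|z|z_0$ yielding $v_a(z\gamma,\tau)=|z|\,v_a(\gamma,z_0\tau)$ and the corresponding transfer of extreme, limit extreme, and $\T_a$ data. No gaps.
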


From this lemma we deduce that if the (weak) u.r.p.\ holds for an eigenvector $\gamma$ associated to $\beta$ then it holds for $z\gamma$ for all $z\in\C\setminus\{0\}$. If the eigenvalue $\beta$ is simple, then this condition is independent of the choice of the eigenvector, so in this case we can speak of the (weak) u.r.p.\ for $\beta$. We believe that there are some natural algebraic conditions that imply this fact.

The weak u.r.p.\ is in some sense analogous to the algebraic condition considered in the case $\beta$ is real. In fact, restating our definitions for the real case, Lemma 19 of \cite{persistence} is equivalent to the weak u.r.p.

We will prove in Section \ref{sec:example} that the cubic Arnoux-Yoccoz map satisfies the u.r.p.\ for some simple non real eigenvalue $\beta$.

\section{Proof of the main Theorem}\label{sec:main theorem}

We restate our main theorem for completeness. 
We continue with the notations of the three previous sections. 

\begin{cthm}{A}\label{teo:main}
Let $T$ be a self-similar i.e.m. Assume that $M$ has an eigenvalue $\beta$ with $|\beta|>1$ such that $\beta/|\beta|$ is not a root of unity, and that there exists an eigenvector $\Gamma$ for $\beta$ such that $T$ has the u.r.p.\ for $\beta$ and $\Gamma$. Then, for almost every $\gamma$ in the complex subspace generated by $\Gamma$, $\exp(-\Re(\gamma))$ can be realized as the slope vector of an affine i.e.m.\ which is semi-conjugate with $T$ and has wandering intervals. 
\end{cthm}
Of course, affine i.e.m.'s with wandering intervals cannot be conjugate with $T$, so the theorem asserts the existence of affine i.e.m.'s  strictly semi-conjugate with $T$. 
We remark that each of these affine i.e.m.'s is uniquely ergodic, since $T$ is.
\smallbreak

The weak u.r.p.\ is sufficient to prove Theorem~\ref{teo:main}. In fact, the proof of this theorem relies completely on Lemma~\ref{prop:main-prop}, which is also true under the weak u.r.p.\ as we point out at the end of the proof. 

The (weak) u.r.p.\ may seem technical and difficult to check for a specific map. In the next section we prove that this property is satisfied by the cubic Arnoux-Yoccoz i.e.m.
\smallbreak

As discussed in Section \ref{sec:strategy}, using the general strategy of \cite{cam-gut} the proof of Theorem \ref{teo:main} is a consequence of the following more technical statement that we prove in the next section.

\begin{thm}\label{teo:mainaux}
Under the same assumptions of Theorem \ref{teo:main}, there exists a number $\rho > 0$ such that, for almost all $\gamma$ in the complex subspace generated by $\Gamma$, every minimal sequence $\omega \in \Omega_T$ for $\gamma$ satisfies:
\begin{equation}\label{eq:goal}
\liminf_{n \to \infty} \frac{ \Re( \gamma_n(\omega)) }{n^\rho} > 0 \quad \text{ and } \quad  \liminf_{n \to \infty} \frac{ \Re( \gamma_{-n}(\omega)) }{n^\rho} > 0.
\end{equation}
\end{thm}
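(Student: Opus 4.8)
The plan is to show that, for a minimal sequence $\omega$, the growth of $\Re(\gamma_n(\omega))$ is governed by how the quantities $v_a(\tau)$ behave along the orbit $\beta_0^n\tau$ in $\SS^1$, and that the u.r.p.\ forces a uniform positive lower bound away from the countable exceptional set $\T$. First I would use the prefix-suffix decomposition $(p_m, c_m, s_m)_{m\geq 1}$ of $\omega$ together with the identity $\gamma_n(\omega) = \gamma(\omega_0\ldots\omega_{n-1})$ to express, for indices $n$ of the form where $\omega_{[0,n)}$ aligns with $\sigma^{N}$ of an initial block, a formula of the type $\Re(\gamma_n(\omega)) = |\beta|^{N}\bigl(-v_a(\tau_N) + o(1)\bigr)$, where $\tau_N = \beta_0^{N}$ (up to sign/conjugation conventions) and $a = c_N$. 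Concretely: by Lemma~\ref{lem:reverse-trajectory}, reversing the prefix-suffix decomposition at level $n_k$ produces limit extreme points, and by Lemma~\ref{lem:minimo n} combined with the Exponential approximation Lemma~\ref{lem:minimum-exponential}, the truncated sums $\f_a^{(n_k)}$ approximate $\f_a$ with error $O(|\beta|^{-n_k})$. Translating back through $\gamma(\sigma^{N-m}(p_m)) = \beta^{N-m}\gamma(p_m)$, the partial sum $\Re(\gamma_n(\omega))$ equals $|\beta|^{N}$ times $\Re(\tau_N \f_a(x_\infty)) = v_a(\tau_N)$, up to a multiplicative error $1 + O(|\beta|^{-1})$ — but since $v_a < 0$ strictly (the remark after Lemma~\ref{lem:minimal-not-periodic}), this already forces $\Re(\gamma_n(\omega)) \to \infty$ at an exponential rate in $N$, hence at a polynomial rate $n^\rho$ in $n$ (because $n$ grows at most like $\|M\|^{N}$, giving $\rho = \log|\beta| / \log\|M\|$ or a similar ratio of logarithms of the Perron eigenvalue and $|\beta|$).

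The subtlety is that not every $n$ is of this "aligned" form, so the second step is to interpolate: for a general large $n$, find the largest $N$ and the largest aligned index $n' \leq n$, and control $\Re(\gamma_n(\omega)) - \Re(\gamma_{n'}(\omega))$ from below. Here I would invoke Lemma~\ref{lem:minimal-leq} (and its forward-time analogue), which says that minimal prefixes cannot drop below the value at the left end; combined with the Continuation property Lemma~\ref{lem:continuation} applied recursively down the prefix-suffix tower, this bounds the fluctuation between consecutive aligned indices by $C|\beta|^{N-1}$ for a uniform constant $C$. Since the aligned value itself is of order $|\beta|^{N}$, the ratio stays bounded below, and the $\liminf$ over all $n$ of $\Re(\gamma_n(\omega))/n^\rho$ is positive. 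The argument for $\gamma_{-n}(\omega)$ is symmetric, using the suffix side of the decomposition and the corresponding version of minimality for $n \leq -1$ already recorded in Lemma~\ref{lem:minimal-leq}.

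The main obstacle — and the only place the u.r.p.\ and the "almost every $\gamma$" genuinely enter — is ensuring that the exceptional set of directions $\T$ (countable by Lemma~\ref{lem:countable}) does not interfere, i.e.\ that the orbit $\{\beta_0^N\tau : N \geq 1\}$, which is equidistributed in $\SS^1$ since $\beta_0$ is not a root of unity, avoids $\T$ persistently enough. This is where the theorem must pass to a full-measure set of $\gamma = z\Gamma$: by Lemma~\ref{lem:homotecy}, replacing $\Gamma$ by $z\Gamma$ rotates $\T$ by $z_0^{-1}$, so for all but countably many arguments of $z$ the rotated exceptional set is disjoint from any prescribed orbit — and more to the point, for a.e.\ $z$ the u.r.p.\ holding for $\Gamma$ (hence for $z\Gamma$) guarantees $E_a(\tau) = E_a^*(\tau)$ is a single point for the directions that actually occur, so that $v_a$ is attained uniquely and the approximation in the first paragraph has no competing near-minimizers. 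I expect the hardest technical point to be making the constant $\rho$ and the implied constants genuinely uniform over the a.e.\ set of $\gamma$ — this likely requires a compactness argument on $\SS^1$ using continuity of $v_a$ (Lemma~\ref{lem:continuity of v_a}) together with the uniform exponential bound of Lemma~\ref{lem:minimum-exponential}, carefully invoking the weak u.r.p.\ (which as the authors note suffices) to rule out degenerate extreme sets along the occurring directions.
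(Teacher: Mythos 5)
There is a genuine gap, and it sits at the very center of your argument: the claim that at ``aligned'' indices one has $\Re(\gamma_n(\omega)) = |\beta|^{N}\bigl(-v_a(\tau_N)+o(1)\bigr)$ with $-v_a$ uniformly positive, so that exponential growth is automatic and the fluctuations between aligned indices (which you bound by $C|\beta|^{N-1}$) cannot destroy it. The correct identity, obtained by comparing the reversed prefix--suffix decompositions of $\omega$ and of $S^{n}\omega$ (this is \eqref{eq:teo1_2} in the paper's proof of Proposition \ref{prop:main-prop}), is
$\Re(\gamma_{n}(\omega)) = |\beta|^{N}\,\Re\bigl(\beta_0^{N}(\f_a(y_{N})-\f_a(x_{N}))\bigr)$,
a \emph{difference} of two quantities in which $x_N$ realizes the truncated minimum $v_a^{(N)}(\beta_0^{N})$ (Lemma \ref{lem:minimo n}) but $y_N$ need not be far from it: the two points lie in distinct subfractals $\F_{a,(p,c,s)}$ and $\F_{a,(\bar p,\bar c,\bar s)}$, and their values come close precisely when $\beta_0^{N}$ approaches a direction $\tau\in\Psi_a$ where two subfractals share the extreme value. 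Since $\beta_0$ is not a root of unity, $\beta_0^{N}$ equidistributes and does approach such directions infinitely often, so there is no uniform constant $c>0$ with $\Re(\gamma_n(\omega))\ge c|\beta|^{N}$; the dips can be of strictly smaller order than $|\beta|^{N}$. Likewise, the fluctuation of the partial sums inside a block $\sigma^{N}(s_N)$ is of order $|\beta|^{N}$, the \emph{same} order as the value at the aligned index, not $O(|\beta|^{N-1})$ with a favorable constant, so your interpolation step ``the ratio stays bounded below'' is unjustified. If your first paragraph were correct, the theorem would hold for every eigenvector and without any representation hypothesis, which is exactly what the authors say is false (``This does not necessarily hold for an arbitrary eigenvector $\gamma$''). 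This is also why $\rho$ is taken as $\log(|\beta|-\eta)/\log(\alpha^{-1}+\eta)$, slightly below the naive exponent: the slack $A^{N}=\bigl(|\beta|/(|\beta|-\eta)\bigr)^{N}$ is precisely what is sacrificed to absorb how close $\beta_0^{N}$ may come to the bad directions.

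Relatedly, you misplace where the u.r.p.\ and the ``almost every $\gamma$'' genuinely act. The u.r.p.\ does not say that $E_a(\tau)$ is a singleton or that $v_a$ has a unique near-minimizer; it says every extreme point has a unique symbolic representation, and it is used to conclude that the two limit points $\f_a(x_\infty)\in E_{a,(p,c,s)}(\tau)$ and $\f_a(y_\infty)\in E_{a,(\bar p,\bar c,\bar s)}(\tau)$, having representations with distinct first coordinates, must be \emph{distinct complex numbers}, so that $D>0$ in Lemma \ref{lem:central}. The genericity in $\gamma$ is not about the orbit avoiding $\T$ (mere disjointness is useless); it is the quantitative Diophantine condition $\liminf_n A^{n}\llbracket\tau-\beta_0^{n}\rrbracket>0$ for all $\tau$ in the countable set $\T(\gamma)$ (Lemmas \ref{lem:countable} and \ref{lem:lebesguefull}). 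The paper's proof is a contradiction argument: if $\Re(\gamma_{n_k}(\omega))=o(n_k^{\rho})$, then the normalized difference above tends to zero faster than $A^{-N_k}$, while Lemma \ref{lem:central} (a one-sided derivative estimate for $v_{a,(p,c,s)}$) together with Lemma \ref{lem:minimum-exponential} bounds it below by $D\llbracket\tau-\beta_0^{N_k}\rrbracket-2C|\beta|^{-N_k}$, contradicting the good-eigenvector condition. Your proposal, as written, never produces this comparison between two subfractals along a common direction, which is the step the hypotheses are actually for; without it the direct ``lower bound at aligned indices plus interpolation'' scheme does not close.
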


\subsection{Proof of Theorem \ref{teo:mainaux}}

\subsubsection{Differentiability of $v_a$} We start by describing some differentiability properties of the map $v_a \colon \SS^1 \to \R$. 

\begin{lem}\label{lem:derivatives}
Let $a \in \A$ and $\tau \in \SS^1$. One has
\begin{align*}
		\lim_{t \to 0^+} \frac{ v_a(\tau \exp(i t)) - v_a(\tau) }{t} &= -\Im( \tau e_a^+(\tau) ), \\
		\lim_{t \to 0^-} \frac{ v_a(\tau \exp(i t)) - v_a(\tau) }{t} &= -\Im( \tau e_a^-(\tau) ),
\end{align*}
where $e_a^+(\tau), e_a^-(\tau)$ are the points of $E_a(\tau)$ such that $z \mapsto \Im(\tau z)$ is maximal and minimal respectively.
\end{lem}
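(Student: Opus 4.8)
The statement is a standard "derivative of a min of linear functionals" computation, of the same flavor as Danskin's theorem, adapted to the fact that the parameter lives on the circle $\SS^1$. The plan is as follows. Fix $a\in\A$ and $\tau\in\SS^1$. For small real $t$, write $\tau_t = \tau\exp(it)$, and pick $z_t\in E_a(\tau_t)$, i.e.\ a point of $\F_a$ attaining $v_a(\tau_t) = \Re(\tau_t z_t)$. Since $v_a(\tau)\le \Re(\tau z)$ for every $z\in\F_a$, the function $t\mapsto v_a(\tau_t)$ is bounded above by $t\mapsto \Re(\tau_t z)$ for any fixed $z$ and these two agree at $t=0$ whenever $z\in E_a(\tau)$; this gives one-sided inequalities on the difference quotient. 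Conversely, using $z_t$ and the inequality $v_a(\tau)\le \Re(\tau z_t)$ gives the reverse one-sided bound. Putting these together, the one-sided limits exist and equal the one-sided extremes of $t\mapsto \left.\frac{d}{dt}\Re(\tau\exp(it)z)\right|_{t=0} = \Re(i\tau z) = -\Im(\tau z)$ over $z\in E_a(\tau)$.

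More precisely, for the right derivative: for any $z\in E_a(\tau)$ and $t>0$,
\[
\frac{v_a(\tau_t)-v_a(\tau)}{t} \le \frac{\Re(\tau_t z)-\Re(\tau z)}{t} \xrightarrow[t\to 0^+]{} -\Im(\tau z),
\]
so $\limsup_{t\to 0^+}$ of the difference quotient is at most $\min_{z\in E_a(\tau)}(-\Im(\tau z)) = -\Im(\tau e_a^+(\tau))$ (the minimum of $-\Im(\tau z)$ is attained where $\Im(\tau z)$ is maximal). For the matching lower bound, take $z_t\in E_a(\tau_t)$; by compactness of $\F_a$ (established in Section~\ref{sec:fractals}) and continuity of $v_a$ (Lemma~\ref{lem:continuity of v_a}), along any sequence $t_n\to 0^+$ we may extract a subsequence with $z_{t_n}\to z_\ast$, and then $\Re(\tau z_\ast)=\lim \Re(\tau_{t_n}z_{t_n})=v_a(\tau)$, so $z_\ast\in E_a(\tau)$. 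Using $v_a(\tau)\le\Re(\tau z_{t_n})$ we get
\[
\frac{v_a(\tau_{t_n})-v_a(\tau)}{t_n} \ge \frac{\Re(\tau_{t_n}z_{t_n})-\Re(\tau z_{t_n})}{t_n},
\]
and a first-order Taylor expansion of $\Re(\tau\exp(it)z_{t_n})$ in $t$, uniform in the bounded set $\F_a$, shows the right-hand side converges to $-\Im(\tau z_\ast)\ge -\Im(\tau e_a^+(\tau))$. Hence $\liminf_{t\to 0^+}\ge -\Im(\tau e_a^+(\tau))$, and combined with the $\limsup$ bound the right derivative equals $-\Im(\tau e_a^+(\tau))$. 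The left derivative is entirely analogous, with the inequality direction reversed by the sign of $t$, producing the minimum of $\Im(\tau z)$ over $E_a(\tau)$, i.e.\ $e_a^-(\tau)$.

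The only genuinely delicate point is making the Taylor estimate uniform: one needs $\Re(\tau\exp(it)z) = \Re(\tau z) - t\,\Im(\tau z) + O(t^2|z|)$ with the $O$-constant independent of $z$, which is immediate since $\F_a$ is bounded (compactness again) and $\exp(it) = 1 + it + O(t^2)$. A secondary subtlety is to check that $e_a^+(\tau)$ and $e_a^-(\tau)$ are well defined, i.e.\ that the continuous function $z\mapsto\Im(\tau z)$ attains its max and min on the compact set $E_a(\tau)$; this holds because $E_a(\tau)$ is closed (a level set of the continuous function $z\mapsto\Re(\tau z)$ intersected with the compact $\F_a$) hence compact. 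I expect no real obstacle here — the lemma is essentially a packaging of convex-analytic folklore, and the main care is bookkeeping the two one-sided arguments and the sign conventions so that "maximal'' goes with the right derivative and "minimal'' with the left.
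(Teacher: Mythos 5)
Your proof is correct, and it follows the same overall envelope-differentiation strategy as the paper: bound the difference quotient from one side by comparing with a fixed point of $E_a(\tau)$, and from the other side by using a minimizer $z_t \in E_a(\tau\exp(it))$ together with $v_a(\tau)\le\Re(\tau z_t)$. Where you diverge is in the treatment of the delicate step. The paper fixes $z=e_a^+(\tau)$, works in polar coordinates, first derives the inequality $\Im(\tau z)\le\Im(\tau z_t)$, and then proves directly that $\Im(\tau z_t)\to\Im(\tau z)$ as $t\to 0^+$ via an auxiliary function $u(t)=\min\{\Re(\tau\exp(it)z');\ z'\in\F_a,\ \Im(\tau z')\ge\Im(\tau z)+\eta\}$ and a separation parameter $\eta$, exploiting that $z$ was chosen with maximal imaginary part. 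You instead never identify the limit of $\Im(\tau z_t)$: you extract a subsequential limit $z_\ast$, note that continuity of $v_a$ (Lemma \ref{lem:continuity of v_a}) and compactness of $\F_a$ force $z_\ast\in E_a(\tau)$, and then the sandwich closes automatically because $-\Im(\tau z_\ast)\ge -\Im(\tau e_a^+(\tau))$ holds for \emph{any} point of $E_a(\tau)$, matching the upper bound $\limsup\le\min_{z\in E_a(\tau)}(-\Im(\tau z))$. This is a Danskin-type argument that is somewhat cleaner: it avoids the auxiliary function and the $\eta$-separation entirely, and the convergence of the minimizers' imaginary parts comes out as a byproduct rather than being an input. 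The only bookkeeping points --- uniformity of the first-order expansion of $\Re(\tau\exp(it)z)$ over the bounded set $\F_a$, compactness of $E_a(\tau)$ so that $e_a^\pm(\tau)$ exist, and the sign reversal when dividing by $t<0$ --- are exactly the ones you flag, and they are handled correctly.
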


\begin{proof}
We will only prove the first equality, since the second one is analogous. Let $z = e_a^+(\tau), z_t \in E_a(\tau \exp(i t))$ and let us write $\tau z = r \exp(i \theta)$ and $\tau z_t = r_t \exp(i \theta_t)$ for $r, r_t > 0$, $\theta, \theta_t \in [0, 2\pi)$. We will assume that $0 < t < \pi/2$.
	
Since $z = e_a^+(\tau)$ and $z_t \in E_a(\tau \exp(i t))$, we have that $v_a(\tau) = \Re(\tau z)=r \cos(\theta)$, $v_a(\tau \exp(i t)) = \Re(\tau \exp(it) z_t)=r_t \cos(\theta_t + t)$ and $-\Im(\tau e_a^+(\tau)) = -r\sin(\theta)$. Thus, we have to prove that
$$
\lim_{t \to 0^+} \frac{r_t \cos(\theta_t + t) - r \cos(\theta)}{t} = -r\sin(\theta).
$$
Since $z_t \in E_a(\tau \exp(i t))$, we have that
$$r \cos(\theta + t) = \Re(\tau \exp(i t) z) \geq \Re(\tau \exp(i t) z_t) = r_t \cos(\theta_t + t).$$
Hence, 
\begin{equation}\label{eq:lat_derv_1}
\begin{split}
0 &\geq r_t \cos(\theta_t + t) - r \cos(\theta + t) \\
&= (r_t \cos(\theta_t) - r \cos(\theta) ) \cos(t) + (r \sin(\theta) - r_t \sin(\theta_t)) \sin(t).
\end{split}
\end{equation}
Moreover, since $z \in E_a(\tau)$ we have that $r \cos(\theta) = \Re(\tau z) \leq \Re( \tau z_t ) = r_t \cos(\theta_t)$. 
We conclude that $(r_t \cos(\theta_t) - r \cos(\theta) ) \cos(t) \geq 0$ for $t$ small enough.
Therefore, from $\sin(t)>0$ and \eqref{eq:lat_derv_1} we have that $r \sin(\theta) - r_t \sin(\theta_t)$ cannot be positive. 
This proves that 
$\Im(\tau z) = r \sin(\theta) \leq r_t \sin(\theta_t) = \Im( \tau z_t )$.

We claim that $\lim_{t \to 0^+} \Im(\tau z_t) = \Im(\tau z)$. Indeed, fix a real $\eta > 0$ and consider
$$u(t) = \min\{ \Re(\tau \exp(i t) z'); z' \in \F_a, \Im(\tau z') \geq \Im(\tau z) + \eta\}.$$
Since $z$ is in $E_a(\tau)$ and is chosen with the maximal possible value for $\Im(\tau z)$, we have that $\Re(\tau z) < u(0)$. The map $u \colon [0, \pi/2) \to \R$ is continuous, so there exists $t_0 > 0$ such that $\Re(\tau \exp(i t) z) < \Re(\tau \exp(i t) z')$ for every $0 \leq t < t_0$ and $z' \in \F_a$ such that $\Im(\tau z') \geq \Im(\tau z) + \eta$. Therefore, if  $0 \leq t < t_0$ and $\Im(\tau z_t) \geq \Im(\tau z) + \eta$, then 
$\Re(\tau \exp(i t) z) < \Re(\tau \exp(i t) z_t)$, which contradicts the fact that $z_t \in E_a(\tau \exp(i t))$. This shows that $\Im(\tau z_t) < \Im(\tau z) + \eta$ for $0 \leq t < t_0$, so we get the desired result by taking $\eta \to 0$.

The previous claim and \eqref{eq:lat_derv_1} imply that
$$
\lim_{t \to 0^+} \frac{r_t \cos(\theta_t + t) - r \cos(\theta + t)}{t} \geq \lim_{t \to 0^+} (\Im(\tau z_t ) - \Im(\tau z)) \frac{\sin(t)}{t} = 0.
$$
Finally, we write
$$
\frac{v_a(\tau \exp(i t)) - v_a(\tau)}{t} = \frac{r_t \cos(\theta_t + t) - r \cos(\theta + t)}{t} + \frac{ r \cos(\theta + t) - r \cos(\theta) }{t},
$$
and the result follows by taking $t \to 0^+$.
\end{proof}

\subsubsection{Good directions and good eigenvectors}

In order to prove the convergences in \eqref{eq:goal} we need to control the velocity at which $\beta_0^n$ approaches a number $\tau$ in $\SS^1$. 
In what follows $\llbracket \tau - \tau' \rrbracket$ is the natural distance between $\tau$ and $\tau'$ in $\SS^1$. We fix an eigenvector $\Gamma$ of $M$ for $\beta$. We will be interested in the complex subspace generated by $\Gamma$.

\begin{defn}\label{def:good}
A direction $\xi \in \SS^1$ is \emph{good} for $\Gamma$ if for every constant $A > 1$ and  every $\tau \in \T(\Gamma) = \bigcup_{a \in \A} \T_a(\Gamma)$ we have $\liminf_{n \to \infty}  A^n \llbracket \tau - \beta_0^n\xi \rrbracket > 0$.
\end{defn}

As shown by the next lemma, this property is generic.

\begin{lem}\label{lem:lebesguefull}
Almost every direction $\xi \in \SS^1$ is good for $\Gamma$.
\end{lem}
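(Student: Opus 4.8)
The plan is to show that the exceptional set of directions $\xi \in \SS^1$ that fail to be good for $\Gamma$ has Lebesgue measure zero, by writing it as a countable union of null sets. Fix a countable basis $A = 2, 3, 4, \ldots$ (it suffices to check the condition for integer $A \geq 2$, since the condition is monotone in $A$). By Lemma~\ref{lem:countable}, the set $\T(\Gamma) = \bigcup_{a \in \A} \T_a(\Gamma)$ is at most countable; enumerate its elements as $\tau^{(1)}, \tau^{(2)}, \ldots$. A direction $\xi$ fails to be good precisely when there exist an integer $A \geq 2$ and some $\tau^{(j)} \in \T(\Gamma)$ with $\liminf_{n \to \infty} A^n \llbracket \tau^{(j)} - \beta_0^n \xi \rrbracket = 0$. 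So the bad set is contained in $\bigcup_{A \geq 2} \bigcup_{j \geq 1} B_{A, j}$, where
$$
B_{A, j} = \Bigl\{ \xi \in \SS^1 : \llbracket \tau^{(j)} - \beta_0^n \xi \rrbracket < A^{-n} \text{ for infinitely many } n \Bigr\}.
$$
Since a countable union of null sets is null, it is enough to prove that each $B_{A, j}$ has measure zero.

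For fixed $A \geq 2$ and fixed $\tau = \tau^{(j)}$, I would apply the Borel--Cantelli lemma. For each $n$, let
$$
U_n = \Bigl\{ \xi \in \SS^1 : \llbracket \tau - \beta_0^n \xi \rrbracket < A^{-n} \Bigr\}.
$$
Since $\xi \mapsto \beta_0^n \xi$ is a rotation of $\SS^1$ (recall $|\beta_0| = 1$), it is measure-preserving, so $U_n$ is the preimage under this rotation of the arc of radius $A^{-n}$ around $\tau$; hence $\mathrm{Leb}(U_n) = 2 A^{-n}$ (taking total mass $2\pi$ this is a constant times $A^{-n}$; the exact normalization is irrelevant). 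Therefore $\sum_{n \geq 1} \mathrm{Leb}(U_n) = \sum_{n \geq 1} 2 A^{-n} < \infty$ because $A \geq 2 > 1$. By the (easy half of the) Borel--Cantelli lemma, the set of $\xi$ lying in infinitely many $U_n$ has measure zero; but that set is exactly $B_{A, j}$. This completes the argument.

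The argument is essentially a routine measure-theoretic computation, so I do not expect a serious obstacle; the only point that needs a word of care is the reduction to \emph{countably many} pairs $(A, \tau)$, which is where Lemma~\ref{lem:countable} (countability of $\T_a$) and the monotonicity of the defining condition in $A$ are used. One should also note at the start that if $\T(\Gamma)$ is empty then the bad set is empty and there is nothing to prove, and if some $\T_a(\Gamma)$ is finite rather than countably infinite the same enumeration still works. No equidistribution or irrationality input on $\beta_0$ is needed here beyond $|\beta_0| = 1$; the hypothesis that $\beta_0$ is not a root of unity, used elsewhere, plays no role in this particular lemma.
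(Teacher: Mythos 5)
Your overall strategy (reduce to countably many pairs $(A,\tau)$ using Lemma~\ref{lem:countable}, then apply Borel--Cantelli to each fixed pair) is exactly the paper's, and the Borel--Cantelli computation for a fixed $A$ and $\tau$ is fine. However, the reduction step ``it suffices to check the condition for integer $A \geq 2$, since the condition is monotone in $A$'' is wrong: the monotonicity goes the other way. For $A' \geq A$ one has $A'^n \llbracket \tau - \beta_0^n \xi \rrbracket \geq A^n \llbracket \tau - \beta_0^n \xi \rrbracket$, so knowing $\liminf_n A'^n \llbracket \tau - \beta_0^n \xi \rrbracket > 0$ for all integers $A' \geq 2$ says nothing about $A \in (1,2)$. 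Concretely, if $\llbracket \tau - \beta_0^n \xi \rrbracket$ behaved like $(1.5)^{-n}$ along a subsequence, then $\xi$ would fail to be good (take $A = 1.4$), yet $\xi$ would belong to none of your sets $B_{A,j}$ with $A \geq 2$; so the inclusion of the bad set in $\bigcup_{A \geq 2}\bigcup_j B_{A,j}$, on which your whole argument rests, is false.

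The repair is immediate and is what the paper does: instead of integers $A \geq 2$, take a countable family decreasing to $1$, say $A_k = 1 + 1/k$. If $\xi$ fails to be good for some $A_0 > 1$ and $\tau \in \T(\Gamma)$, pick $k$ with $1 + 1/k \leq A_0$; then $\liminf_n (1+1/k)^n \llbracket \tau - \beta_0^n \xi \rrbracket = 0$ as well, so $\llbracket \tau - \beta_0^n \xi \rrbracket < (1+1/k)^{-n}$ for infinitely many $n$, and your Borel--Cantelli argument applies verbatim to each pair $(A_k, \tau)$ because $\sum_n (1+1/k)^{-n} < \infty$. With this one-line change (the paper's $K = \bigcap_{n \geq 1} K(1+1/n)$ intersected over the countably many $\tau \in \T(\Gamma)$), your proof coincides with the paper's.
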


\begin{proof}
Let $A > 1$ and $\tau \in \SS^1$. We will first prove that
$$
K(A, \tau) = \left\{ \xi \in \SS^1; \liminf_{n \to \infty} A^n \llbracket \tau - \beta_0^n\xi \rrbracket > 0 \right\}
$$
has full Lebesgue measure. Consider the sets $B_n = \{ \xi\in\SS^1; \llbracket \tau - \beta_0^n\xi \rrbracket < A^{-n} \}$ for an integer $n \geq 0$. By the Borel-Cantelli Lemma, the Lebesgue measure of $\limsup_{n\to \infty} B_n$ is zero. That is, the set of $\xi$ which belong to infinitely many of the $B_n$'s has Lebesgue measure zero.
This implies that for a typical $\xi \in \SS^1$ there exists some $N \geq 1$ such that  $A^n \llbracket \tau - \beta_0^n\xi \rrbracket \geq 1$ if $n \ge N$. This proves the claim. 

Now, since by Lemma \ref{lem:countable} we have that $\T(\Gamma)$ is at most countable, the intersection  
$K(A) = \bigcap_{\tau \in \T(\Gamma)}$ $K(A, \tau)$ also has full Lebesgue measure. Finally, the intersection $K = \bigcap_{n \geq 1} K(1+1/n)$ has full Lebesgue measure and it is easy to see that every element of $K$ is a good direction.
\end{proof}

We can now define the eigenvectors for which our main result is valid.
\begin{defn}	
An eigenvector $\gamma$ of $M$ associated with $\beta$ is a \emph{good eigenvector} if for every $A>1$ and every $\tau\in\bigcup_{a\in \A} \T_a(\gamma)$ we have $\liminf_{n\to\infty} A^n \llbracket \tau - \beta_0^n \rrbracket > 0.$
\end{defn}

One has that if $\xi \in \SS^1$ is a good direction for $\Gamma$, then $\xi \Gamma$ is a good eigenvector by Lemma \ref{lem:homotecy}. Therefore, by Lemma \ref{lem:lebesguefull} we conclude that:
\begin{lem}\label{lem:almost-good-eigenvector}
If $\Gamma$ is an eigenvector of $M$ for $\beta$, then almost every vector of the complex subspace generated by $\Gamma$ is good.
\end{lem}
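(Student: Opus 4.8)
The plan is to obtain this statement as a direct consequence of Lemma~\ref{lem:lebesguefull} together with the scaling identities of Lemma~\ref{lem:homotecy}. First I would parametrize the complex line spanned by $\Gamma$ by the map $z \mapsto z\Gamma$, noting that the only value to discard is $z = 0$ (which does not give an eigenvector, and is a single point anyway), and write each nonzero $z$ in polar form $z = |z|\,z_0$ with $z_0 = z/|z| \in \SS^1$. The crucial point is that whether $z\Gamma$ is a good eigenvector depends only on $z_0$: by part~(iii) of Lemma~\ref{lem:homotecy} one has $\T_a(z\Gamma) = z_0^{-1}\T_a(\Gamma)$ for each $a \in \A$, hence $\T(z\Gamma) = z_0^{-1}\T(\Gamma)$, and by rotation invariance of the distance on $\SS^1$,
\[
\llbracket \tau - \beta_0^n \rrbracket = \llbracket z_0^{-1}\tau' - \beta_0^n \rrbracket = \llbracket \tau' - \beta_0^{n} z_0 \rrbracket
\]
whenever $\tau = z_0^{-1}\tau' \in \T(z\Gamma)$ with $\tau' \in \T(\Gamma)$. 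Matching this against Definition~\ref{def:good}, this shows that $z\Gamma$ is a good eigenvector if and only if $z_0$ is a good direction for $\Gamma$.

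It then remains to transfer the full-measure statement from $\SS^1$ to the line $\C\Gamma$. By Lemma~\ref{lem:lebesguefull}, the set of good directions for $\Gamma$ has full Lebesgue measure in $\SS^1$; call its complement $N$, a null set. Writing Lebesgue measure on $\C$ in polar coordinates as $r\,dr\,d\theta$ and applying Fubini, the set $\{\,z \in \C\setminus\{0\} : z/|z| \in N\,\}$ is contained in $\{\,r e^{i\theta} : r > 0,\ e^{i\theta} \in N\,\}$, which is null in $\C$ because $N$ is null in $\SS^1$. Hence, for almost every $z \in \C$, the vector $z\Gamma$ is a good eigenvector, which is precisely the assertion under the identification of $\C\Gamma$ with $\C$ via $z \mapsto z\Gamma$.

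I do not anticipate a genuine obstacle: the argument is a bookkeeping exercise combining Lemma~\ref{lem:homotecy} with a Fubini-type transfer of a null set from the circle to a complex line. The only step deserving care is verifying that a \emph{good eigenvector} is the rotation-equivariant counterpart of a \emph{good direction} — that is, checking that the $\beta_0^n$ appearing (without a multiplier) in Definition~\ref{def:good} is compensated exactly by the rotation $z_0^{-1}$ of $\T(\Gamma)$ coming from Lemma~\ref{lem:homotecy}(iii), so that the two notions correspond under $\xi = z_0$.
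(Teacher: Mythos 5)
Your argument is correct and is essentially the paper's own proof: the paper likewise observes (in one sentence) that Lemma~\ref{lem:homotecy} makes $\xi\Gamma$ a good eigenvector whenever $\xi$ is a good direction for $\Gamma$, and then invokes Lemma~\ref{lem:lebesguefull}. Your write-up merely makes explicit the rotation-equivariance check and the polar-coordinate/Fubini transfer of the null set from $\SS^1$ to the line $\C\Gamma$, which the paper leaves implicit.
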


\subsubsection{Convergence in Theorem \ref{teo:mainaux}}
Assume the existence of an eigenvector $\Gamma$ for $\beta$ such that $T$ has the (weak) u.r.p.\ for $\beta$ and $\Gamma$. We show the convergence in \eqref{eq:goal} for the set of good eigenvectors in the subspace generated by $\Gamma$, which has full measure by the previous lemma. Fix a good eigenvector $\gamma$. As discussed at the end of Section~\ref{sec:URP}, $T$ also has the (weak) u.r.p.\ for $\beta$ and $\gamma$.

In the rest of the section, the sets $\F_a$, $E_a$, $\Psi_a$, etc.,\ are computed with respect to $\gamma$.

The proof of the next lemma and proposition closes the proof of Theorem \ref{teo:mainaux} and thus of Theorem \ref{teo:main} as explained in Section \ref{sec:strategy}.

\begin{figure}
	\centering
	\includegraphics[width=0.5\textwidth]{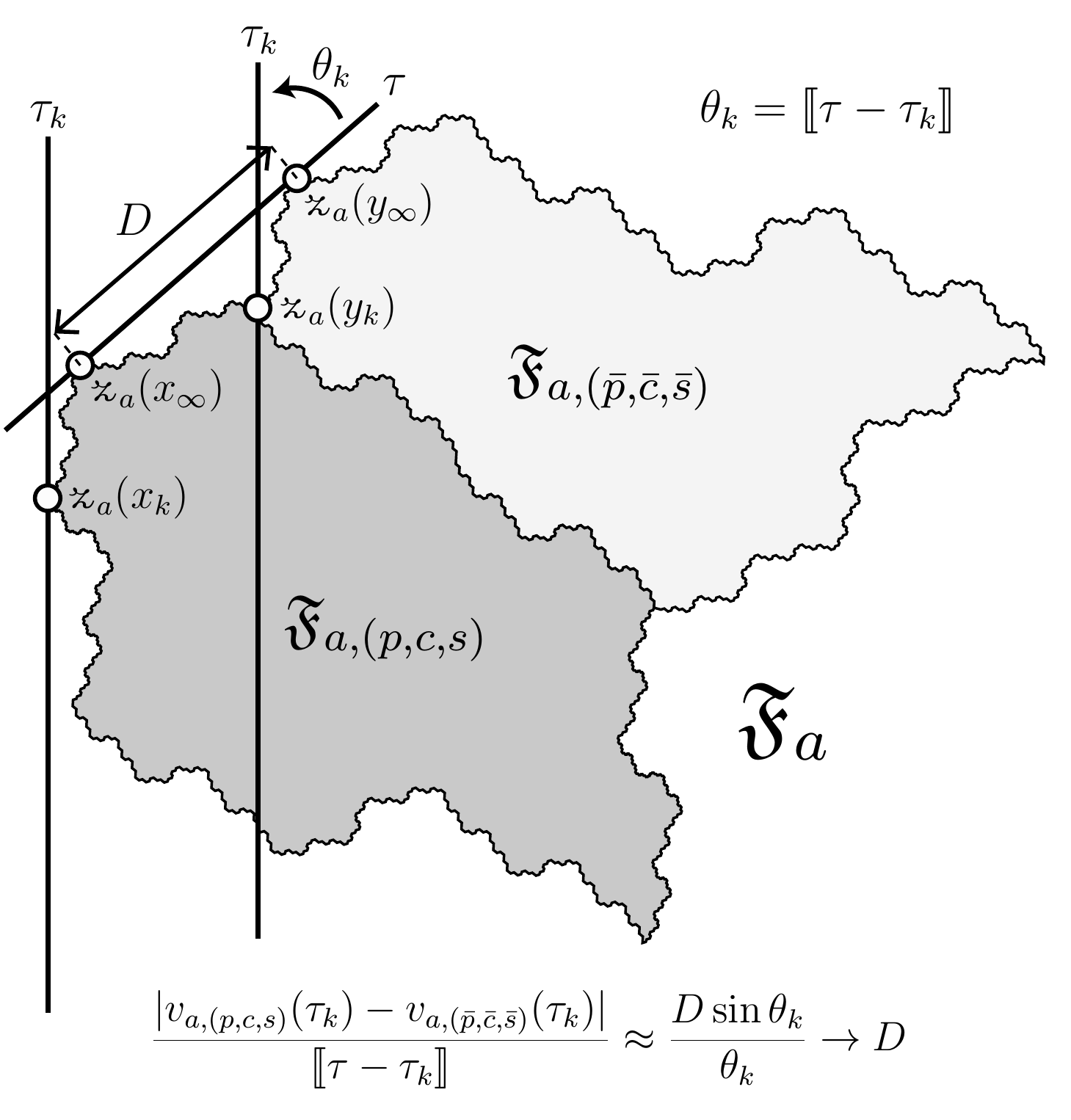}
	\caption{Illustration of the proof of Lemma \ref{lem:central}.}
	\label{fig:lemma}
\end{figure}

\begin{lem}\label{lem:central} Let $a \in \A$ and $\tau\in \Psi_a$. Assume that $(p,c,s), (\bar p,\bar c,\bar s)$ are such that $E_{a,(p,c,s)}(\tau)$ and $E_{a,(\bar p,\bar c,\bar s)}(\tau)$ are nonempty but $E_{a,(p,c,s)}(\tau) \cap E_{a,(\bar p,\bar c,\bar s)}(\tau) = \varnothing$. Put
$$ D =\min\{ |z - z'|; z\in E_{a,(p,c,s)}(\tau), z'\in E_{a,(\bar p,\bar c,\bar s)}(\tau)\}.$$
We have that $D>0$ and if $(\tau_k)_{k\ge 1}$ is a sequence in $\SS^1$ such that  $\tau_k \to \tau$ when $k\to\infty$,
	then 
	$$\liminf_{k\to\infty} \frac{|v_{a,(p,c,s)}(\tau_k) - v_{a,(\bar p,\bar c,\bar s)}(\tau_k)|}{\llbracket \tau - \tau_k \rrbracket} \ge D.$$
\end{lem}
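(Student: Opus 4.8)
First I would record that $D>0$: the sets $E_{a,(p,c,s)}(\tau)=E_a(\tau)\cap\F_{a,(p,c,s)}$ and $E_{a,(\bar p,\bar c,\bar s)}(\tau)=E_a(\tau)\cap\F_{a,(\bar p,\bar c,\bar s)}$ are closed subsets of the compact set $\F_a$, hence compact, and they are nonempty and disjoint by hypothesis, so the distance between them is strictly positive. I would also note here that, since $E_{a,(p,c,s)}(\tau)\neq\varnothing$, we have $v_{a,(p,c,s)}(\tau)=v_a(\tau)$, and likewise $v_{a,(\bar p,\bar c,\bar s)}(\tau)=v_a(\tau)$; in particular the numerator in the statement vanishes at $\tau$, which is exactly why $\tau$ lies in $\Psi_a$.

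The heart of the argument is a version of Lemma~\ref{lem:derivatives} for the subfractals. Since $\F_{a,(p,c,s)}=\beta^{-1}(\gamma(p)+\F_c)$ is compact, the proof of Lemma~\ref{lem:derivatives} applies verbatim with $\F_a$, $v_a$ and $E_a(\tau)$ replaced by $\F_{a,(p,c,s)}$, $v_{a,(p,c,s)}$ and $\{z\in\F_{a,(p,c,s)}:\Re(\tau z)=v_{a,(p,c,s)}(\tau)\}$; by the previous paragraph this last set equals $E_{a,(p,c,s)}(\tau)$. One thus obtains
$$
\lim_{t\to0^{\pm}}\frac{v_{a,(p,c,s)}(\tau\exp(it))-v_{a,(p,c,s)}(\tau)}{t}=-\Im\big(\tau\,e^{\pm}_{(p,c,s)}\big),
$$
where $e^{+}_{(p,c,s)}$ (resp.\ $e^{-}_{(p,c,s)}$) is the point of $E_{a,(p,c,s)}(\tau)$ at which $z\mapsto\Im(\tau z)$ is maximal (resp.\ minimal); similarly for the barred decomposition, with points $e^{\pm}_{(\bar p,\bar c,\bar s)}\in E_{a,(\bar p,\bar c,\bar s)}(\tau)$.

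Now I would write $\tau_k=\tau\exp(it_k)$ with $t_k\to0$ and $\llbracket\tau-\tau_k\rrbracket=|t_k|+o(t_k)$, and pass to a subsequence on which $t_k$ has a fixed sign, say $t_k>0$ (the case $t_k<0$ is identical, with $e^{-}$ in place of $e^{+}$). Subtracting the two expansions coming from the displayed formula and using $v_{a,(p,c,s)}(\tau)=v_{a,(\bar p,\bar c,\bar s)}(\tau)$, one gets
$$
v_{a,(p,c,s)}(\tau_k)-v_{a,(\bar p,\bar c,\bar s)}(\tau_k)=-t_k\,\Im\big(\tau(e^{+}_{(p,c,s)}-e^{+}_{(\bar p,\bar c,\bar s)})\big)+o(t_k),
$$
so the quotient in the statement converges to $\big|\Im\big(\tau(e^{+}_{(p,c,s)}-e^{+}_{(\bar p,\bar c,\bar s)})\big)\big|$. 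Since $e^{+}_{(p,c,s)}$ and $e^{+}_{(\bar p,\bar c,\bar s)}$ both lie in $E_a(\tau)$, we have $\Re\big(\tau(e^{+}_{(p,c,s)}-e^{+}_{(\bar p,\bar c,\bar s)})\big)=v_a(\tau)-v_a(\tau)=0$; hence $\tau(e^{+}_{(p,c,s)}-e^{+}_{(\bar p,\bar c,\bar s)})$ is purely imaginary, and its modulus equals $|e^{+}_{(p,c,s)}-e^{+}_{(\bar p,\bar c,\bar s)}|\ge D$ by definition of $D$. Thus along this subsequence the quotient tends to a limit $\ge D$; since every subsequence of $(\tau_k)$ has a further subsequence with this property, $\liminf_{k\to\infty}$ of the quotient is $\ge D$, which is the claim.

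The only mildly delicate points I foresee are checking that the proof of Lemma~\ref{lem:derivatives} transfers without change to the compact subfractals $\F_{a,(p,c,s)}$ — it uses nothing beyond compactness — and the bookkeeping needed to handle sequences $\tau_k$ that approach $\tau$ from both sides; the geometric core, namely that forcing $\Re(\tau(\cdot))=0$ turns the imaginary part into the full modulus, is immediate, so I do not expect a serious obstacle.
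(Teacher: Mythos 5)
Your proof is correct, and it ends with the same geometric punchline as the paper: since both $E_{a,(p,c,s)}(\tau)$ and $E_{a,(\bar p,\bar c,\bar s)}(\tau)$ lie in $E_a(\tau)$, the difference of the two relevant extreme points becomes purely imaginary after multiplication by $\tau$, so its imaginary part has modulus at least $D$. The route to the first-order behaviour, however, is genuinely different. You transfer Lemma \ref{lem:derivatives} to the compact subfractals $\F_{a,(p,c,s)}$ (a Danskin-type one-sided differentiability of $\xi \mapsto v_{a,(p,c,s)}(\xi)$), expand $v_{a,(p,c,s)}(\tau_k)$ and $v_{a,(\bar p,\bar c,\bar s)}(\tau_k)$ to first order along a one-sided subsequence, and read off the limit of the quotient as $\bigl|\Im\bigl(\tau(e^{+}_{(p,c,s)}-e^{+}_{(\bar p,\bar c,\bar s)})\bigr)\bigr| \ge D$. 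The paper instead stays at the level of representations: it takes minimizing sequences $x_k \in \S_{a,(p,c,s)}$, $y_k \in \S_{a,(\bar p,\bar c,\bar s)}$ for the directions $\tau_k$, passes to limit points $x_\infty, y_\infty$, and uses the continuation property to write each difference $\Re(\tau\f_a(x_\infty))-\Re(\tau_k\f_a(x_k))$ as a finite sum plus an $|\beta|^{-n_k}$ tail, with Lemma \ref{lem:derivatives} invoked only to keep the tail difference quotients bounded; the main term then converges to the same imaginary part. Your version is shorter and avoids the prefix-suffix bookkeeping, at the small cost of re-proving Lemma \ref{lem:derivatives} for the subfractals; as you say this needs only compactness, the one cosmetic caveat being that the polar writing $\tau z_t = r_t e^{i\theta_t}$ with $r_t>0$ in that proof is justified near $\tau$ because $v_{a,(p,c,s)}$ is continuous and equals $v_a(\tau)<0$ there (or one rephrases the proof directly in terms of $\Re(\tau z_t)$ and $\Im(\tau z_t)$). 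The paper's formulation has the advantage that the objects it manipulates (limits of reversed representations realizing subfractal minima) are exactly those reused in the proof of Proposition \ref{prop:main-prop}; both arguments also share the implicit convention that $\tau_k \neq \tau$, so the quotient is well defined.
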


\begin{proof}
See Figure \ref{fig:lemma} for an insight into the proof, which is in fact a little technical. 

The property $D>0$ is consequence of the fact that $E_{a,(\bar p,\bar c,\bar s)}(\tau)$ and $E_{a,(p,c,s)}(\tau)$ are nonempty, disjoint and compact.  

Let $(x_k)_{k\geq 1}, (y_k)_{k\geq 1}$ be sequences in $\S_{a,(p,c,s)}$ and $\S_{a,(\bar p,\bar c,\bar s)}$ respectively, such that $x_k\to x_\infty$ and $y_k\to y_\infty$ when $k\to\infty$ for some sequences in $x_\infty \in \S_{a,(p,c,s)}$ and $y_\infty \in \S_{a,(\bar p,\bar c,\bar s)}$ and 
$$v_{a,(p,c,s)}(\tau_k) =\Re(\tau_k\f_a(x_k)), \quad v_{a,(\bar p,\bar c,\bar s)}(\tau_k)= \Re(\tau_k\f_a(y_k))$$ for all $k\ge 1$. We remark that $\f_a(x_k)$ and $\f_a(y_k)$ attain the minimum for the direction $\tau_k$ and the subfractals $\F_{a,(p,c,s)}$ and $\F_{a,(\bar p,\bar c,\bar s)}$ respectively, but not necessarily for $\F_a$, as Figure~\ref{fig:lemma} illustrates.

By continuity and the fact that $E_{a,(p,c,s)}(\tau)$ and $E_{a,(\bar p,\bar c,\bar s)}(\tau)$ are nonempty by hypotheses, we have $\f_a(x_\infty) \in E_{a,(p,c,s)}(\tau)$ and $\f_a(y_\infty) \in E_{a,(\bar p,\bar c,\bar s)}(\tau)$. Therefore, \begin{equation}\label{eq:v(a;tau)}
 v_a(\tau) =\Re(\tau \f_a(x_\infty)) = \Re(\tau \f_a(y_\infty)). 
 \end{equation}
From the definition of $D$,
\begin{equation}\label{eq:imaginary not zero}
|\Im(\tau (\f_a(x_\infty) - \f_a(y_\infty))| \ge D.
\end{equation}
	
Since $x_k \to x_\infty$ and $y_k \to y_\infty$, there exists an increasing sequence $(n_k)_{k\ge 1}$ such that $(p_m^{x_k}, c_m^{x_k}, s_m^{x_k}) = (p_m^{x_\infty}, c_m^{x_\infty}, s_m^{x_\infty})$ and $(p_m^{y_k}, c_m^{y_k}, s_m^{y_k}) = (p_m^{y_\infty}, c_m^{y_\infty}, s_m^{y_\infty})$
for every $1\le m \le n_k$. Without loss of generality, we may assume that $c_{n_k}^{x_k} = c_{n_k}^{x_\infty} = b$ and that $c_{n_k}^{y_k} = c_{n_k}^{y_\infty} = \bar b$ for every $k\ge 1$. 
	
Let $S$ be the left shift in $\bigcup_{a\in\A} \S_a$. Recall that, from the continuation property in Lemma \ref{lem:continuation}, if $x\in \S_a$ represents an extreme point for the direction $\tau$ then
$ S^m(x) \in \S_{c_m^x}$ represents an extreme point for the direction $\beta_0^{-m}\tau$, for every integer $m \geq 0$. Consequently, we obtain that
$  v_{b} (\beta_0^{-n_k}\tau_k)= \Re (\beta_0^{-n_k}\tau_k\f_{b}(S^{n_k}(x_k)) )$  
and $v_{b} (\beta_0^{-n_k}\tau) = \Re( \beta_0^{-n_k}\tau
\f_{b}(S^{n_k}(x_\infty)))$ for every $k \geq 1$.
		
Therefore, we can write
\begin{equation}\label{eq:forma2}
\begin{split}
\Re(\tau \f_a(x_\infty)) -	\Re(\tau_k \f_a(x_k) )  & = \Re ( (\tau - \tau_k) ( \beta^{-1} \gamma(p^{x_\infty}_1)+ \dots +\beta^{-n_k} \gamma(p^{x_\infty}_{n_k}) ) ) \\ &+ |\beta|^{-n_k} ( v_{b}(\beta_0^{-n_k}\tau) - v_{b}(\beta_0^{-n_k}\tau_k) ).
\end{split} \end{equation}
Analogously,
	
\begin{equation}\label{eq:forma1}
\begin{split}
	\Re(\tau \f_a(y_\infty)) -	\Re(\tau_k \f_a(y_k))  & = \Re( (\tau - \tau_k)( \beta^{-1} \gamma(p^{y_\infty}_1)+ \dots +\beta^{-n_k} \gamma(p^{y_\infty}_{n_k}) )) \\ &+ |\beta|^{-n_k} ( v_{\bar b}(\beta_0^{-n_k}\tau ) - v_{\bar b}(\beta_0^{-n_k}\tau_k) ).
	\end{split} \end{equation}
	
Thus, by taking \eqref{eq:forma1}${}-{}$\eqref{eq:forma2}, multiplying by $\llbracket \tau - \tau_k \rrbracket^{-1}$ and using \eqref{eq:v(a;tau)} we obtain
	
	\begin{equation}\label{eq:derivada}
	\begin{split}
	\frac{v_{a,(p,c,s)}(\tau_k) - v_{a,(\bar p,\bar c,\bar s)}(\tau_k)}{\llbracket \tau - \tau_k \rrbracket}&=   \Re\left(\frac{\tau-\tau_k}{\llbracket \tau - \tau_k \rrbracket} \left( \f_a^{(n_k)} (y_\infty) - \f_a^{(n_k)} (x_\infty) \right) \right) \\
	&\quad -|\beta|^{-n_k} \left( \frac{v_{b}(\beta_0^{-n_k}\tau) - v_{b}(\beta_0^{-n_k}\tau_k)}{\llbracket \tau - \tau_k \rrbracket} \right)
	\\ & \quad + |\beta|^{-n_k} \left( \frac{v_{\bar b}(\beta_0^{-n_k}\tau) - v_{\bar b}(\beta_0^{-n_k}\tau_k)}{\llbracket \tau - \tau_k \rrbracket} \right) .
	\end{split}
	\end{equation}

Since 
$\llbracket \beta_0^{-n_k} \tau - \beta_0^{-n_k}\tau_k \rrbracket = \llbracket  \tau - \tau_k \rrbracket$, Lemma~\ref{lem:derivatives} implies that the quotients
$$\frac{v_{b}(\beta_0^{-n_k}\tau) - v_{b}(\beta_0^{-n_k}\tau_k)}{\llbracket \tau - \tau_k \rrbracket} \quad \text{ and } \quad \frac{v_{\bar b}(\beta_0^{-n_k}\tau) - v_{\bar b}(\beta_0^{-n_k}\tau_k)}{\llbracket \tau - \tau_k \rrbracket}$$
remain bounded for every $k\ge 1$, so the last two terms in the previous equality goes to $0$ when $k\to \infty$. Now, if $\tau,\tau'$ belong to $\SS^1$ then $$\lim\limits_{\tau^\prime\to\tau} \frac{\tau-\tau^\prime}{\llbracket \tau -\tau^\prime \rrbracket } = i\tau$$ and therefore 
$$\Re\left( \frac{\tau-\tau_k}{\llbracket \tau - \tau_k \rrbracket} \left(  \f_a^{(n_k)} (y_\infty) - \f_a^{(n_k)} (x_\infty)\right)\right)$$
converges to
$\Re( i\tau( \f_a(y_\infty) - \f_a(x_\infty) ) )$ when $k \to \infty$, which is the imaginary part of $\tau( \f_a(y_\infty) - \f_a(x_\infty) )  $. Finally, we obtain that when $k \to \infty$:
$$\frac{|v_{a,(p,c,s)}(\tau_k) - v_{a,(\bar p, \bar c, \bar s)}(\tau_k)|}{\llbracket \tau -\tau_k \rrbracket} \to |\Im(\tau (\f_a(x_\infty) - \f_a(y_\infty) )| \ge D,$$
where in the last inequality we have used \eqref{eq:imaginary not zero}.
\end{proof}

\begin{prop}\label{prop:main-prop}
Assume the hypotheses of Theorem~\ref{teo:main}. Consider $\omega \in \Omega_T$ a minimal sequence for $\gamma$. Let $\eta > 0$ with $|\beta| - \eta > 1$ and let $\rho = \frac{\log(|\beta| - \eta)}{\log(\alpha^{-1} + \eta)} > 0$. Then,
\begin{equation}\label{eq:mainlemma}
\liminf_{n \to \infty} \frac{ \Re( \gamma_n(\omega)) }{n^\rho} > 0 \quad \text{ and } \quad  \liminf_{n \to \infty} \frac{ \Re( \gamma_{-n}(\omega)) }{n^\rho} > 0. 
\end{equation}
\end{prop}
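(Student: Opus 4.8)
The plan is to follow the strategy of \cite{persistence}, transposed to the complex setting through the fractals $\F_a$ and the exceptional directions $\Psi_a$. Fix the minimal sequence $\omega$ and its prefix-suffix decomposition $(p_m,c_m,s_m)_{m\ge 0}$; by Lemma~\ref{lem:no periodic is minimal} it is not eventually periodic, so infinitely many $p_m$ and infinitely many $s_m$ are nonempty. For $N\ge 1$ set $V_N=\sigma^{N-1}(p_{N-1})\cdots\sigma(p_1)p_0$ and $U_N=c_0s_0\sigma(s_1)\cdots\sigma^{N-1}(s_{N-1})$, so that $\omega_{-|V_N|}\cdots\omega_{-1}=V_N$, $\omega_0\cdots\omega_{|U_N|-1}=U_N$ and $\sigma^N(c_N)=V_NU_N$. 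Because $\alpha^{-1}$ is the Perron--Frobenius eigenvalue of $M$, for $N$ large $|V_N|,|U_N|\le|\sigma^N(c_N)|\le(\alpha^{-1}+\eta)^N$; consequently any bound $\Re(\gamma_{\pm k}(\omega))\gtrsim(|\beta|-\eta)^M$ at a ``scale'' $M=M(k)$ with $k\le(\alpha^{-1}+\eta)^M$ immediately gives $\Re(\gamma_{\pm k}(\omega))\gtrsim k^{\rho}$, which is \eqref{eq:mainlemma}.

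Two non-degeneracy facts come first. By the remark after Lemma~\ref{lem:minimal-not-periodic}, $v_a(\tau)<0$ for all $a\in\A$, $\tau\in\SS^1$, so by continuity (Lemma~\ref{lem:continuity of v_a}) and compactness $c_0:=-\sup_{a,\tau}v_a(\tau)>0$; and the canonical representation of $\gamma_a\in\F_a$ (the one with all suffixes empty) has an eventually periodic sequence of central letters, so by Lemma~\ref{lem:minimal-not-periodic} the point $\gamma_a$ is never an extreme point, which together with a second compactness argument gives $c_1:=\inf_{a,\tau}(\Re(\tau\gamma_a)-v_a(\tau))>0$. One then handles the scale indices $k=|V_N|$ and $k=|U_N|$: Lemma~\ref{lem:minimal-leq} shows $V_N$ is a minimal prefix of $\sigma^N(c_N)$ for $\gamma$, so Lemma~\ref{lem:minimo n} gives $\Re(\gamma(V_N))=|\beta|^Nv^{(N)}_{c_N}(\beta_0^N)$; combined with the Exponential Approximation Lemma~\ref{lem:minimum-exponential} and $c_0$ this yields $\Re(\gamma_{-|V_N|}(\omega))=-|\beta|^Nv^{(N)}_{c_N}(\beta_0^N)\ge\tfrac{c_0}{2}|\beta|^N$ for $N$ large, and, using $\gamma(U_N)=\beta^N\gamma_{c_N}-\gamma(V_N)$ with $\gamma_{c_N}\in\F_{c_N}$ and $c_1$, $\Re(\gamma_{|U_N|}(\omega))=|\beta|^N(\Re(\beta_0^N\gamma_{c_N})-v^{(N)}_{c_N}(\beta_0^N))\ge\tfrac{c_1}{2}|\beta|^N$ for $N$ large. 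This is \eqref{eq:mainlemma} along $(|V_N|)_N$ and $(|U_N|)_N$.

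The core is an arbitrary $n$. Choose the appropriate scale $M=M(n)\to\infty$, namely the one for which the relevant word --- $\omega_0\cdots\omega_{n-1}$ in the positive case, $\omega_{-n}\cdots\omega_{-1}$ in the negative one --- falls strictly between the block words of levels $M-1$ and $M$; then, after prepending $V_M$ (positive) or reading the word as the minimal prefix $V_M$ with a strictly shorter prefix deleted (negative), one obtains a prefix $w$ of $\sigma^M(c_M)$, and by \eqref{eq:prefixes of sigma^n(a)} $\beta^{-M}\gamma(w)\in\F^{(M)}_{c_M}$. The crucial structural point is that the first triple of the representation of $\beta^{-M}\gamma(w)$ \emph{differs} from the first triple of the representation of $V_M$, so $\beta^{-M}\gamma(w)$ lies in a sub\-fractal $\F^{(M)}_{c_M,(\bar p,\bar c,\bar s)}$ distinct from the sub\-fractal $\F^{(M)}_{c_M,(p',c',s')}$ containing $\beta^{-M}\gamma(V_M)$ (the latter being, by Lemmas~\ref{lem:minimal-leq} and~\ref{lem:minimo n}, an exact extreme point of $\F^{(M)}_{c_M}$ for $\beta_0^{\pm M}$); hence $|\beta|^{-M}\Re(\gamma_{\pm n}(\omega))\ge v^{(M)}_{c_M,(\bar p,\bar c,\bar s)}(\beta_0^{\pm M})-v^{(M)}_{c_M,(p',c',s')}(\beta_0^{\pm M})$. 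Now suppose $\liminf_n\Re(\gamma_{\pm n}(\omega))/n^\rho=0$; then along a sequence $\Re(\gamma_{\pm n}(\omega))=o((|\beta|-\eta)^M)=o(|\beta|^MA^{-M})$ with $A:=|\beta|/(|\beta|-\eta)>1$. Passing to a subsequence on which $c_M=a$ and the triples $(\bar p,\bar c,\bar s)$, $(p',c',s')$ are fixed and $\beta_0^{\pm M}\to\tau$, this smallness together with Lemma~\ref{lem:minimum-exponential} forces $v_{a,(\bar p,\bar c,\bar s)}(\tau)=v_{a,(p',c',s')}(\tau)=v_a(\tau)$, so the two distinct sub\-fractals both attain the minimum at $\tau$. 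Since $\beta_0^{\pm M}\to\tau$ with $c_M=a$, the reversing construction of Section~\ref{sec:limit-extreme} gives $E_a^*(\tau)\ne\varnothing$, hence $E_a(\tau)=E_a^*(\tau)$ by Corollary~\ref{cor:many limit extreme}, so by the (weak) unique representation property the sets $E_{a,(p',c',s')}(\tau)$ and $E_{a,(\bar p,\bar c,\bar s)}(\tau)$ are nonempty and disjoint, with positive separation $D$, and in particular $\tau\in\Psi_a$. Then Lemma~\ref{lem:central} gives $|v_{a,(p',c',s')}(\beta_0^{\pm M})-v_{a,(\bar p,\bar c,\bar s)}(\beta_0^{\pm M})|\gtrsim D\,\llbracket\tau-\beta_0^{\pm M}\rrbracket\gtrsim D\,A^{-M}$, the last step by the good-eigenvector hypothesis since $\tau\in\Psi$, whereas the same difference is $o(A^{-M})$ by the assumed smallness and Lemma~\ref{lem:minimum-exponential} --- a contradiction. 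Thus $\liminf_n\Re(\gamma_{\pm n}(\omega))/n^\rho>0$, which is \eqref{eq:mainlemma} for all $n$. Only limit extreme points were used, so the weak u.r.p.\ suffices.

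I expect the general-$n$ step to be the main obstacle, for three reasons. First, writing $\omega_0\cdots\omega_{n-1}$ (resp.\ $\omega_{-n}\cdots\omega_{-1}$) for arbitrary $n$ as a $\sigma^M$-block word whose fractional end pieces are a priori of the same order $|\beta|^M$ as the main term, and showing that after the completion above they land in a precisely identified non-minimizing sub\-fractal, uses minimality (Lemma~\ref{lem:minimal-leq}) in an essential way --- a bare triangle inequality is useless here. Second, converting ``every subsequence eventually satisfies $\gtrsim A^{-M}$'' into a genuine $\liminf>0$ requires some uniformity bookkeeping over the finitely many letters and the finitely many decompositions $\sigma(a)=pcs$. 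Third, the quantitative matching of the linear-separation rate of Lemma~\ref{lem:central} with the Diophantine rate $A^{-M}$ of the good eigenvector, so that the final exponent comes out exactly $\rho=\frac{\log(|\beta|-\eta)}{\log(\alpha^{-1}+\eta)}$, is delicate; the lateral derivatives of Lemma~\ref{lem:derivatives} are the tool that keeps the auxiliary $v_b$-terms bounded inside the proof of Lemma~\ref{lem:central}.
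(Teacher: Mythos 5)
Your proposal is correct and follows essentially the paper's own route: the level $M(n)$ you select by locating $n$ between consecutive block lengths is the same as the paper's first level $N_k$ at which the prefix-suffix decompositions of $\omega$ and $S^{n_k}(\omega)$ agree, and from there you run the identical chain --- minimality plus Lemma~\ref{lem:minimo n} to identify $\beta^{-M}\gamma(V_M)$ as the truncated extreme point, the shifted word landing in a distinct subfractal, Lemma~\ref{lem:minimum-exponential}, the reversing construction and Corollary~\ref{cor:many limit extreme} with the (weak) u.r.p.\ to get disjoint extreme sets of separation $D>0$ and $\tau\in\Psi_a$, and finally Lemma~\ref{lem:central} against the good-eigenvector rate $A^{-M}$ with $A=|\beta|/(|\beta|-\eta)$, exactly the paper's exponent bookkeeping. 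The warm-up at the exact scales $|V_N|$, $|U_N|$ does not appear in the paper (whose formulation via $S^{n_k}(\omega)$ avoids that boundary case), but it is correct and does cover the edge case $n=|V_{M-1}|$ that your block-length decomposition requires.
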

	
\begin{proof} We will only prove the first inequality, since the other one is similar. 

We denote the prefix-suffix decomposition of $\omega$ by $(p_m^{\omega}, c_m^{\omega}, s_m^{\omega})_{m \geq 0}$. Let us assume that there exists an increasing sequence of positive integers $(n_k)_{k \geq 1}$ such that
\begin{equation}\label{eq:hypothesis}
\lim_{k \to \infty} \frac{\Re( \gamma_{n_k}(\omega) )}{n_k^\rho} = 0.
\end{equation}
Let $(p_m^{\omega_k}, c_m^{\omega_k}, s_m^{\omega_k})_{m \geq 0}$ be the prefix-suffix decomposition of $\omega_k = S^{n_k}(\omega)$, where as usual $S$ is the left shift map on the corresponding subshift. 
We start by showing that $\omega$ must have infinitely many nonempty suffixes in its prefix-suffix decomposition Indeed, assume by contradiction that $s_{n_0 + m}^\omega = \varepsilon$ for some integer $n_0 \geq 0$ and every $m \geq 0$. We will show that $(p_m^{\omega}, c_m^{\omega}, s_m^{\omega})_{m \geq 0}$ is eventually periodic, which contradicts Lemma \ref{lem:no periodic is minimal}. We have that $\sigma(c_{n_0 + m + 1}^\omega) = p_{n_0 + m}^\omega c_{n_0 + m}^\omega$ for every $m \geq 0$. Then, for every $m \geq 0$, the value of $c_{n_0 + m + 1}^{\omega}$ determines a unique possible value for $p_{n_0 + m}^{\omega}$ and $c_{n_0 + m}^{\omega}$. By induction, it is easy to see that $(p_m^{\omega}, c_m^{\omega}, s_m^{\omega})_{m \geq n_0}$ is periodic.

Let $N_k$ be the first integer such that $(p_m^{\omega}, c_m^{\omega}, s_m^{\omega})_{m \geq N_k} = (p_m^{\omega_k}, c_m^{\omega_k}, s_m^{\omega_k})_{m \geq N_k}$. By taking a subsequence we can assume that $(N_k)_{k \geq 1}$ is an increasing sequence of integers. Moreover, we may assume that for $k\ge 1$:
\begin{enumerate}[label=(\roman{*})]
\item $c_{N_k}^{\omega} = c_{N_k}^{\omega_k} = a$;
\item $(p_{N_k - 1}^{\omega}, c_{N_k - 1}^{\omega}, s_{N_k - 1}^{\omega}) = (p, c, s)$;
\item $(p_{N_k - 1}^{\omega_k}, c_{N_k - 1}^{\omega_k}, s_{N_k - 1}^{\omega_k}) = (\bar p, \bar c, \bar s)$;
\item $pc$ is a prefix of $q$;
\item $\lim_{k \to \infty} \beta_0^{N_k} = \tau \in \SS^1$.
\end{enumerate}
Since $pc$ is a prefix of $q$, we have that
\begin{equation}\label{eq:teo1_1}
\sigma^{N_k - 1}(p_{N_k - 1}^{\omega_k}) \ldots p_0^{\omega_k} = \sigma^{N_k - 1}(p_{N_k - 1}^{\omega}) \ldots p_0^{\omega}\omega_0 \ldots \omega_{n_k - 1}
\end{equation}
	for every $k \geq 1$.
	
We will now proceed to reverse the indexes of the prefix-suffix decompositions of $\omega$ and $\omega_k$ in order to obtain sequences in $\S_a $. Let $(x_{N_k})_{k \geq 1}$ and $(y_{N_k})_{k \geq 1}$ be the sequences in $\S_a$, obtained by reversing the coordinates of $(p_m^\omega, c_m^\omega, s_m^\omega)_{m \geq 0}$ and $(p_{m}^{\omega_k}, c_{m}^{\omega_k}, p_{m}^{\omega_k} )_{m\ge 0}$  up to the $(N_k-1)$-th coordinate and such that $p_m^{x_{N_k}} = p_m^{y_{N_k}}= \varepsilon$ for each $m \geq N_k$, as detailed at the beginning of Section \ref{sec:limit-extreme}. By the assumptions above, $x_{N_k} \in \S_{a,(p,c,s)}$ and $y_{N_k} \in \S_{a,(\bar p, \bar c, \bar s)}$ for every $k \geq 1$. 

Without loss of generality, we will assume that $x_{N_k}$ converges to $x_\infty \in \S_{a,(p,c,s)}$, which is the representation of a limit extreme point in $E^*_a(\tau)$ by Lemma \ref{lem:reverse-trajectory}. We will show that any limit point of $y_{N_k}$ in $\S_{a,(\bar p, \bar c, \bar s)}$ is the representation of an extreme point in $E_a(\tau)$ and therefore that $\tau$ belongs to $\Psi_a$.
\medbreak
	
Applying $\gamma$ to \eqref{eq:teo1_1}, using the definitions of $x_{N_k}, y_{N_k}$ and multiplying by $|\beta|^{-N_k}$,  we get that for every $k \geq 1$:
$$
\beta_0^{N_k} \sum_{m = 1}^{N_k} \beta^{-m} \gamma( p_{m}^{y_{N_k}} ) 
=  \beta_0^{N_k} \sum_{m = 1}^{N_k} \beta^{-m} \gamma ( p_{m}^{x_{N_k}} ) +  |\beta|^{-N_k} \gamma_{n_k} (\omega).
$$

By taking real parts and rearranging the previous expression we obtain:
$$
\Re( \beta_0^{N_k} ( \f_a ( y_{N_k} ) - \f_a ( x_{N_k} ) ) ) = |\beta|^{-N_k} \Re( \gamma_{n_k}(\omega) ).
$$

Furthermore, $\Re(\beta_0^{N_k} \f_a(x_{N_k})) \leq \Re(\beta_0^{N_k} \f_a(y_{N_k}))$ by Lemma \ref{lem:minimo n}. Then we get
\begin{equation}\label{eq:teo1_2}
0 \leq \Re( \beta_0^{N_k} ( \f_a( y_{N_k} )- \f_a(x_{N_k} ) ) ) = |\beta|^{-N_k} \Re( \gamma_{n_k}(\omega) ).
\end{equation}
On the other hand, since $\omega_0\omega_1\ldots\omega_{n_k-1}$ is a subword of $\sigma^{N_k}(a)$ and $|\sigma^{N_k}(a)|$ grows as $\alpha^{-N_k}$ (recall that $\alpha^{-1}>1$ is the Perron-Frobenius eigenvalue of $M$), we have that $n_k \leq (\alpha^{-1}+\eta)^{N_k}$ for sufficiently large $k \geq 1$, where $\eta >0$ was given in the statement of the proposition. 
Therefore, by definition of $\rho$, 
$$
n_k^{-\rho}\geq (\alpha^{-1}+\eta)^{-N_k \rho} = (|\beta|-\eta)^{-N_k} \geq |\beta|^{-N_k}.
$$
From the assumption \eqref{eq:hypothesis}, we obtain that
$$
\lim_{k\to \infty} ( |\beta|-\eta )^{-N_k}  \Re( \gamma_{n_k}(\omega) ) = \lim_{k\to \infty} |\beta|^{-N_k}  \Re( \gamma_{n_k}(\omega) ) = 0.
$$
In particular, from equation~\eqref{eq:teo1_2} we obtain that any limit point $y_\infty$ of $y_{N_k}$ in $\S_{a,(\bar p, \bar c, \bar s)}$ is such that
$\f_a(y_\infty)$ is an extreme point for the direction $\tau = \lim_{k \to \infty} \beta_0^{N_k}$: $v_a(\tau) = \Re(\tau \f_a(y_\infty))$. Therefore, $\tau$ belongs to $\Psi_a$.

Amplifying equation \eqref{eq:teo1_2} by $A^{N_k}$, where $A = \frac{|\beta|}{|\beta| - \eta} \in (1, |\beta|)$, we find that
$$
0 \leq A^{N_k} \Re(\beta_0^{N_k} ( \f_a( y_{N_k} )- \f_a(x_{N_k} ) ) ) = (|\beta| - \eta)^{-N_k} \Re( \gamma_{n_k}(\omega) )
$$
for all sufficiently large $k$. Hence,
\begin{equation}\label{eq:contradicted}
\lim_{k \to \infty} A^{N_k} \Re(\beta_0^{N_k} ( \f_a( y_{N_k} )- \f_a(x_{N_k} ) ) ) = 0.
\end{equation}

We know from Lemma \ref{lem:minimo n}  that
$$ v_a^{(N_k)}(\beta_0^{N_k}) = v_{a,(p,c,s)}^{(N_k)}(\beta_0^{N_k}) = \Re(\beta_0^{N_k} \f_a (x_{N_k})).$$
We also know that $\Re(\beta_0^{N_k} \f_a (y_{N_k})) \geq v_{a,(\bar p, \bar c, \bar s)}^{(N_k)}(\beta_0^{N_k})$ and therefore that
\begin{equation} \label{eq:desig-1}
\Re(\beta_0^{N_k} ( \f_a( y_{N_k} )- \f_a(x_{N_k} ) ) ) \ge v_{a,(\bar p, \bar c, \bar s)}^{(N_k)}(\beta_0^{N_k}) - v_{a,(p,c,s)}^{(N_k)}(\beta_0^{N_k}) \ge 0.  \end{equation} 

On the other hand, since $x_\infty\in \S_{a,(p,c,s)}$  and $y_\infty\in\S_{a,(\bar p, \bar c, \bar s)}$ are representations of extreme points for $\tau$, the u.r.p.\ implies that $E_{a,(p,c,s)}(\tau)$ and $E_{a,(\bar p, \bar c, \bar s)}(\tau)$ are disjoint, so in particular $\f_a(x_\infty) \neq \f_a(y_\infty)$. We notice at this point that the weak u.r.p.\ is sufficient. Indeed the extreme points $\f_a(x_\infty) $ and $ \f_a(y_\infty)$ are also limit extreme points in $ E_a^*(\tau) $, so the weak u.r.p.\ implies that they are different. Also notice that, by Corollary \ref{cor:many limit extreme}, $E_a(\tau) = E_a^*(\tau)$.

Using Lemma~\ref{lem:minimum-exponential} we conclude that for each $k \geq 1$:
\begin{equation}\label{eq:min-exponential}
v_{a,(\bar p, \bar c, \bar s)}^{(N_k)}(\beta_0^{N_k}) - v_{a,(p,c,s)}^{(N_k)}(\beta_0^{N_k}) \geq  v_{a,(\bar p, \bar c, \bar s)}(\beta_0^{N_k}) - v_{a,(p,c,s)}(\beta_0^{N_k}) - 2C |\beta|^{-N_k}\end{equation}
for a constant $C > 0$ which does not depend on $k$.

Finally, by \eqref{eq:desig-1}, \eqref{eq:min-exponential} and Lemma \ref{lem:central} we conclude that
\begin{equation*}
\Re(\beta_0^{N_k} ( \f_a( y_{N_k} )- \f_a(x_{N_k} ) ) ) \ge
D \llbracket \tau - \beta_0^{N_k} \rrbracket - 2C|\beta|^{-N_k} \end{equation*}
for infinitely many $k \geq 1$. Since $\gamma$ is a good eigenvector for $\Gamma$ and $\tau\in\Psi_a$, by definition, $\liminf_{k\to\infty} A^{N_k} \llbracket \tau - \beta_0^{N_k} \rrbracket = \infty$. This contradicts \eqref{eq:contradicted}.
\end{proof}

\subsection{Proof of Theorem \ref{teo:galoisconjugate}}
The Theorem follows from Theorem \ref{teo:main0} if we prove that $\beta$ is a simple eigenvalue of $M$ and that $\beta/|\beta|$ is not a root of unity. These two facts can proved if either $\beta$ is Galois-conjugate with $\alpha^{-1}$, or if $\beta$ is Galois-conjugate with $\alpha$ and the self-similarity comes from Rauzy-Veech renormalizations, that is, if some iteration of the Rauzy-Veech algorithm on $T$ and the interval $[0,\alpha)$ returns to $T$ (see \cite{viana} for details about the Rauzy-Veech algorithm). This last condition is natural in the following sense: let $\pi$ be a vertex on a Rauzy class and consider a cycle starting at $\pi$ in which every letter wins and loses at least once.
Let $R$ be the matrix of such cycle. Then, $R$ is primitive and $\alpha^{-1}$ is its Perron-Frobenius eigenvalue. Also, $R \lambda = \alpha^{-1} \lambda$ for some positive eigenvector $\lambda$ whose coordinates add up to $1$. The i.e.m.\ with combinatorial data $(\pi, \lambda)$ is periodic for the Rauzy-Veech algorithm. Thus, this algorithm provides a simple way to construct self-similar i.e.m.

Let $\psi\colon \Q(\xi) \mapsto \Q(\beta)$ be the natural field isomorphism coming from the Galois-conjugacy, where $\xi$ is either $\alpha$ or $\alpha^{-1}$. The following Lemma proves what we need:

\begin{lem}\label{lem:simple eigenspaces}
Let $\beta$ be an eigenvalue of $M$ that is either Galois-conjugate with $\alpha^{-1}$, or is Galois-conjugate with $\alpha$ and $T$ is periodic for the Rauzy-Veech renormalization algorithm on $[0,\alpha)$. Then $\beta$ has algebraic and geometric multiplicity one. Moreover, if $\beta$ is not real, then $\beta/|\beta|$ is not a root of unity.
\end{lem}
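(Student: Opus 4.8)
The plan is to reduce the statement to two standard facts about the Perron--Frobenius eigenvalue of a primitive matrix, handling the two alternatives of the hypothesis almost in parallel. First recall that $R$, hence $M=R^{t}$, is a primitive nonnegative integer matrix, so $\alpha^{-1}$ is its Perron--Frobenius eigenvalue; in particular $\alpha^{-1}$ is a \emph{simple} root of the characteristic polynomial $\chi_{M}\in\Z[x]$ and it strictly dominates every other root in modulus (so $\alpha^{-1}$ is a Perron number). Since distinct monic irreducible polynomials over $\Q$ share no root, the minimal polynomial $q$ of $\alpha^{-1}$ divides $\chi_{M}$ with multiplicity exactly one — otherwise $\alpha^{-1}$ would be a repeated root — and hence \emph{every} Galois conjugate of $\alpha^{-1}$ is a simple root of $\chi_{M}$. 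In the first case ($\beta$ Galois-conjugate to $\alpha^{-1}$) this is already the conclusion: $\beta$ has algebraic multiplicity one, and since the geometric multiplicity of an eigenvalue lies between $1$ and its algebraic multiplicity, it equals one as well.

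For the second case ($\beta$ Galois-conjugate to $\alpha$, with $T$ periodic for Rauzy--Veech induction on $[0,\alpha)$) I would use the periodicity to gain extra structure on $\chi_{M}$. Since $R$ is a product of elementary Rauzy matrices $I+E_{ij}$, each of determinant $1$, it lies in $\mathrm{GL}_{n}(\Z)$, so $M\in\mathrm{GL}_{n}(\Z)$ and $\chi_{M}(0)\neq 0$. The standard structure of the Rauzy--Veech cocycle (symplectic on the homology of the suspension surface, of finite order on the complementary ``trivial'' subspace) yields a factorization $\chi_{M}=P\cdot C$ in $\Z[x]$ with $P$ palindromic and $C$ a product of cyclotomic polynomials. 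As $|\alpha^{-1}|>1$, $\alpha^{-1}$ is not a root of $C$; being a simple root of $\chi_{M}$ it is therefore a simple root of $P$, so $q\mid P$ with multiplicity one. Palindromy of $P$ then forces the reciprocal polynomial $q^{*}$ to divide $P$ with multiplicity one, and $q^{*}$ is precisely the minimal polynomial of $(\alpha^{-1})^{-1}=\alpha$, hence of $\beta$. Since $q^{*}$ has the root $\beta$ with $|\beta|>1$ it is not cyclotomic, so $q^{*}\nmid C$; consequently $q^{*}$ has multiplicity one in $\chi_{M}=PC$, and $\beta$ is a simple root of $\chi_{M}$, so geometric multiplicity one again follows.

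For the last assertion I would reduce to a single statement about Perron numbers. In the second case replace $\beta$ by $\beta^{-1}$, which is Galois-conjugate to $\alpha^{-1}$ and non-real, noting that $\beta/|\beta|$ is a root of unity if and only if $\beta^{-1}/|\beta^{-1}|$ is; thus it suffices to show that a non-real $\Q$-conjugate $\theta$ of the Perron number $\alpha^{-1}$ cannot have $\theta/|\theta|$ a root of unity. Suppose $\theta/|\theta|$ were a primitive $k$-th root of unity. Since $\theta\notin\R$ we have $k\geq 3$, and $\theta^{k}=|\theta|^{k}$ is a positive real, so $\Q(\theta^{k})\subseteq\R$ and $[\Q(\theta):\Q(\theta^{k})]\geq 2$. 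Transporting this through the field isomorphism $\Q(\alpha^{-1})\cong\Q(\theta)$, $\alpha^{-1}\mapsto\theta$, gives $[\Q(\alpha^{-1}):\Q(\alpha^{-k})]\geq 2$, so the minimal polynomial of $\alpha^{-1}$ over $\Q(\alpha^{-k})$ has degree at least $2$. But that polynomial divides $x^{k}-\alpha^{-k}$, so one of the numbers $\alpha^{-1}\omega$ with $\omega^{k}=1$, $\omega\neq 1$, is a $\Q$-conjugate of $\alpha^{-1}$ — contradicting that $\alpha^{-1}$ strictly dominates its conjugates, since $|\alpha^{-1}\omega|=\alpha^{-1}$ while $\alpha^{-1}\omega\neq\alpha^{-1}$.

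The routine parts are the first case and the root-of-unity statement, which only use that $\alpha^{-1}$ is a Perron number. I expect the main obstacle to be the second case: one has to be sure that Rauzy--Veech periodicity really produces the reciprocal-up-to-cyclotomic factorization $\chi_{M}=PC$ — equivalently, that the non-symplectic part of the cocycle acts with finite order — because without the palindromy of $P$ the multiplicity of $\beta$ in $\chi_{M}$ is not controlled by the simplicity of $\alpha^{-1}$ alone; a weaker but sufficient substitute would be a direct argument that $\alpha$ (equivalently $\beta$) is a simple eigenvalue of $M$.
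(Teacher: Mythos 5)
Your proposal is correct, and for the multiplicity statement it takes essentially the paper's route: the $\alpha^{-1}$-conjugate case is the Perron--Frobenius argument you give, and the periodic case is handled through the symplectic-plus-finite-order structure of the Rauzy--Veech matrix, transferring simplicity from $\alpha^{-1}$ to $\alpha$ by reciprocity of the symplectic factor and then to $\beta$ by Galois conjugacy. The step you black-box --- the factorization $\chi_M = P\cdot C$ with $P$ reciprocal and $C$ a product of cyclotomics --- is exactly what the paper imports from \cite{veech-metric}: there is an antisymmetric integer matrix $L^\pi$ with $M L^\pi R = L^\pi$, the matrix is symplectic on $H(\pi)=L^\pi(\C^\A)$, and by Lemma~5.6 of \cite{veech-metric} $R$ permutes a basis of $\ker L^\pi$, so the complementary eigenvalues are roots of unity; thus the obstacle you flag is genuine but is resolved by the same citation the paper relies on, not by new work. (Your assignment of which alternative needs periodicity matches the lemma's statement; the paper's proof text momentarily calls $\alpha$ the Perron--Frobenius eigenvalue of $M$, a slip for $\alpha^{-1}$, but the intended argument is the one you wrote.) Where you genuinely diverge is the root-of-unity assertion: the paper supposes $\beta^n$ real, notes that $\beta^n=\bar{\beta}^n$ then has multiplicity two for $M^n$, observes that $M^n$ is primitive and is the matrix of the induced map on $[0,\alpha^n)$, and reruns the first part for $M^n$ to get a contradiction, whereas you argue purely field-theoretically that a non-real conjugate $\theta$ of the Perron number $\alpha^{-1}$ with $\theta^k$ real positive would force some $\theta\omega$, $\omega^k=1$, $\omega\neq 1$, to be a $\Q$-conjugate of $\alpha^{-1}$ of equal modulus, contradicting Perron domination. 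Your version is more elementary and avoids both the induced system and a second pass through the symplectic argument; the paper's version stays uniform with its matrix machinery. One cosmetic point: to rule out $q^*$ being cyclotomic you invoke $|\beta|>1$, which is not literally among the lemma's hypotheses; it is cleaner to note that $q^*$ already has the root $\alpha$ of modulus less than one.
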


\begin{proof}
If $\beta$ is Galois-conjugate with $\alpha$, which is the Perron-Frobenius eigenvalue of $M$, then it has multiplicity $1$. We will now prove that if it is Galois-conjugate with $\alpha^{-1}$ then it also has multiplicity $1$ when one further assumes that $T$ is periodic for the Rauzy-Veech algorithm. We will use the classical results of Veech in \cite{veech-metric}, which use this fact.

Recall that $R = M^t$ is the renormalization matrix coming from the Rauzy-Veech induction. There exists a (possibly degenerate) antisymmetric integer matrix $L^\pi$ such that
\begin{equation}\label{eq:symplectic}
R^t L^\pi R = M L^\pi R = L^\pi.
\end{equation}
Let $H(\pi) = L^\pi(\C^\A)$ and $N(\pi) = \ker L^\pi$. It is easy to check from \eqref{eq:symplectic} that $H(\pi)$ is invariant for $M$ and that $N(\pi)$ is invariant for $R$. The matrix $L^\pi$ is nondegenerate when restricted to $H(\pi)$ and, therefore, $R|_{H(\pi)}$ is symplectic. We will show that the eigenspaces of $\alpha, \alpha^{-1}$ and $\beta$ are contained in $H(\pi)$.

By Lemma 5.6 of \cite{veech-metric}, $R$ acts as a permutation on a basis of $N(\pi)$. Therefore, every eigenvalue of $R|_{N(\pi)}$ has modulus $1$. Let $V$ be the eigenspace of an eigenvalue $z$ for $R$ with $|z| \neq 1$. From \eqref{eq:symplectic}, it is easy to see that $L^\pi(V) \subseteq H(\pi)$ is contained in the eigenspace of $z^{-1}$ for $M$ and, since $V \cap N(\pi) = \{0\}$, that $L^{\pi}(V)$ is the entire eigenspace of $z^{-1}$ for $M$. Thus, the desired eigenspaces are contained in $H(\pi)$.

Let $p(t)$ be the characteristic polynomial of $M$ restricted to $H(\pi)$. We have that $p(t)= t^{|\A|} p(1/t)$ by symplecticity. The Galois-conjugacy implies that $\beta$ has the same algebraic multiplicity of $ \alpha$. Since $\alpha^{-1}$ is a simple root of $p(t)$, and so is $\alpha$ by the equality $p(t)= t^{|\A|} p(1/t)$. The first part of the Lemma is therefore proved.

Now, assume by contradiction that $ \beta $ is Galois-conjugate to $ \xi $ (with  $ \xi= \alpha $ or $ \alpha^{-1} $) and that $\beta^n$ is real for some integer $n$. Since $\bar{\beta}$ is also an eigenvalue of $M$, which is different from $\beta$, we have that $\beta^n = \bar{\beta}^n$ has algebraic multiplicity $2$ for $M^n$. Moreover, $\xi^n$ and $\beta^n$ are also Galois-conjugate. Indeed, if $q(t)$ is the minimal polynomial of $\alpha^n$, then $q(\beta^n) = q(\psi(\xi^n)) = \psi(q(\xi^n)) = 0$.
The matrix $M^n$ is also primitive and corresponds to the induced map of $ T $ on $ [0,\alpha^n) $. We can therefore replicate the proof of the first part of the lemma for $M^n$ and conclude that $\beta^n$ has algebraic multiplicity one, which is a contradiction.
\end{proof}

\section{The cubic Arnoux-Yoccoz map} \label{sec:example}
In this section we illustrate Theorem \ref{teo:main} in the cubic Arnoux-Yoccoz i.e.m.\ (or A-Y i.e.m.\ for simplicity). This map is self-similar in the sense of \cite{geometriciem} but not in the sense of \cite{cam-gut}, 
which is the notion we are following in Theorem \ref{teo:main}. However, it satisfies all the other hypotheses of the theorem. Although the main theorem is written for 
self-similar i.e.m.\ in the sense of \cite{cam-gut}, this precise notion plays a role in a very specific part of the proof of the theorem, which is ensuring that the symbolic system associated with the i.e.m.\ is substitutive. It is possible to prove this fact for the specific case of the A-Y i.e.m.

In addition, it is proved in \cite{geometriciem} that an induced system obtained from the A-Y i.e.m.\ with respect to a precise interval is a self-similar i.e.m.\ in the sense of \cite{cam-gut}, but the resulting substitution associated with the i.e.m.\ is unnecessarily complex to analyse. Nevertheless, some technical but not difficult modifications on the study that we will develop for the A-Y i.e.m.\ below allow to prove that the induced system also satisfies the hypotheses of Theorem \ref{teo:main}.

\subsection{A-Y i.e.m.}

\begin{figure}
	\centering
	\includegraphics[width=0.45\textwidth]{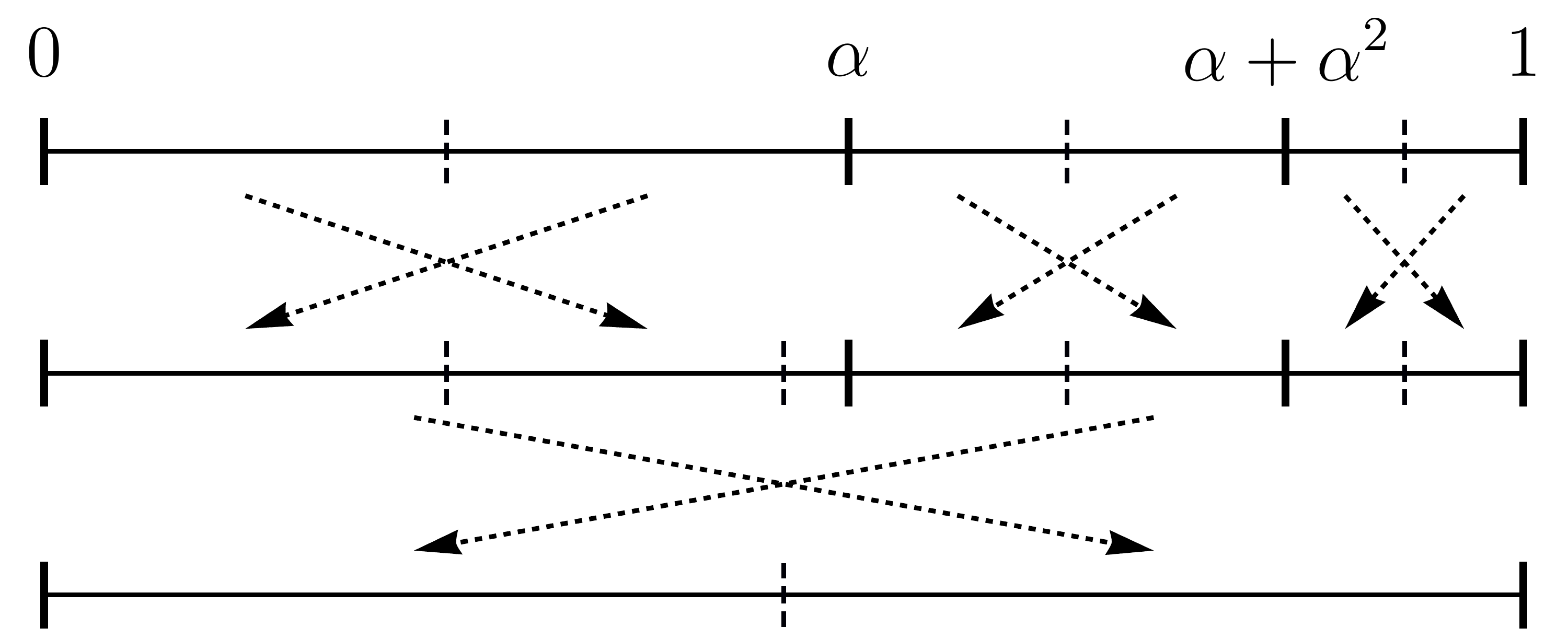}
	\caption{The compositions that produce the cubic Arnoux-Yoccoz i.e.m.\ $T$. The dashed lines show the midpoints of the respective intervals.}
	\label{fig:figureAY1}
\end{figure}

\begin{figure}
	\centering
	\includegraphics[width=0.6\textwidth]{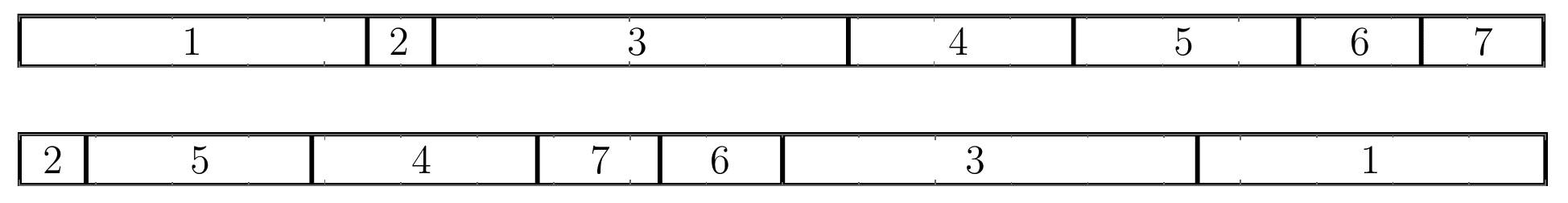}
	\caption{The cubic Arnoux-Yoccoz i.e.m.\ $T$.}
	\label{fig:figureAY2}
\end{figure}

Let $\alpha$ be the unique real number such that 
$\alpha + \alpha^2 + \alpha^3 = 1$ and let $G_{t_0,t_1}$ be the map exchanging both halves of the interval $[t_0, t_1)$ while preserving orientation. That is,
$$
G_{t_0, t_1}(t) = \begin{cases}
	t + (t_0 + t_1)/2 & t \in [t_0, (t_0+t_1)/2), \\
	t - (t_0 + t_1)/2 & t \in [(t_0+t_1)/2, t_1), \\
	t & t \notin [t_0, t_1).
\end{cases}
$$
Then, the A-Y i.e.m.\ is given by $T = G_{0,1} \circ G_{0,\alpha} \circ G_{\alpha, \alpha+\alpha^2} \circ G_{\alpha+\alpha^2,1}$ (see Figures \ref{fig:figureAY1} and \ref{fig:figureAY2} for clarity). Properties of $T$ were extensively discussed in \cite{geometricalmodels}. In particular, it is proved that the map $T$ is equal, up to rescaling and rotation, to the map induced on the interval $[0, \alpha)$ and, by considering an appropriate refinement of continuity intervals of $T$ into nine intervals, one may encode the relation of orbits by $T$ for this partition and the orbits of the induced system for the induced partition by the following substitution $\sigma$ on the alphabet $\A = \{1, \dotsc, 9\}$:
$$\arraycolsep=1.5pt
	\begin{array}{lllllllll}
		\sigma(1) & = & 35 &\qquad \sigma(4) & = & 17 &\qquad \sigma(7) & = & 29 \\
		\sigma(2) & = & 45 &\qquad \sigma(5) & = & 18 &\qquad \sigma(8) & = & 2   \\
		\sigma(3) & = & 46 &\qquad \sigma(6) & = & 19 &\qquad \sigma(9) & = & 3   \\
	\end{array}
$$
One then has that $\Omega_T = \Omega_\sigma$. It is easy to check that $\sigma$ is primitive. This solves the issue at the beginning of the section: the symbolic system is indeed substitutive.

Let $M$ be the matrix associated with the substitution $\sigma$, that is,

{\small
$$
	M = \begin{pmatrix}
		0 & 0 & 1 & 0 & 1 & 0 & 0 & 0 & 0 \\
		0 & 0 & 0 & 1 & 1 & 0 & 0 & 0 & 0 \\
		0 & 0 & 0 & 1 & 0 & 1 & 0 & 0 & 0 \\
		1 & 0 & 0 & 0 & 0 & 0 & 1 & 0 & 0 \\
		1 & 0 & 0 & 0 & 0 & 0 & 0 & 1 & 0 \\
		1 & 0 & 0 & 0 & 0 & 0 & 0 & 0 & 1 \\
		0 & 1 & 0 & 0 & 0 & 0 & 0 & 0 & 1 \\
		0 & 1 & 0 & 0 & 0 & 0 & 0 & 0 & 0 \\
		0 & 0 & 1 & 0 & 0 & 0 & 0 & 0 & 0
	\end{pmatrix} .
$$} 

Its characteristic polynomial is $(1-t^3)(t^3+t^2+t-1)(-t^3+t^2+t+1)$, where the last two factors are irreducible. The roots of $t^3+t^2+t-1$ are $\alpha$, $\beta$ and $\bar{\beta}$, whereas the roots of $-t^3+t^2+t+1$ are $\alpha^{-1}$, $\beta^{-1}$ and $\bar{\beta}^{-1}$, where $\alpha^{-1}$ is the Perron-Frobenius eigenvalue. We assume that $\beta$ is the eigenvalue with positive imaginary part. Numerically, 
$\beta \approx -0.771845 + 1.11514i$.  It is proved in \cite{rauzy} that $(\beta^{-1})^n$ is never real for any $n \in \Z$. Furthermore, the eigenvalue $\beta$ is simple and the corresponding eigenspace is generated by
$$\gamma =
(\beta^2+\beta+1, -\beta, -\beta, -\beta^2-\beta-1, \beta+1, \beta + 1, -\beta^2-\beta-2, -1, -1).$$
Therefore, it is enough to prove the u.r.p.\ for $\beta$.

In what follows $\beta$ and $\gamma$ are the corresponding eigenvalue and eigenvector of $M$ used in the previous sections.

\subsection{Fractals associated with the A-Y i.e.m.}\label{subsect:fractals associated}
Theorem 6.4 of \cite{geometricalmodels} shows that the fractals defined in Section \ref{sec:fractals} exist and satisfy $\F_2 = \F_3$, $\F_5 = \F_6$ and $\F_8 = \F_9$. An illustration of each fractal is given in Figure \ref{fig:AY}. The boundaries of these fractals can be constructed by combining pieces of the boundary of the standard tribonnacci fractal and can therefore be parametrized as we will see later. 

\subsubsection{Parametrization of the boundary of tribonnacci fractal} \label{sec:rauzy}
First we state some important properties of the tribonnacci fractal. We will follow \cite{rauzy} freely. 
Let $\NN$ be the set of sequences in $\{0, 1\}$ without three consecutive $1$'s. The (standard) tribonnacci fractal is defined by
$$
\RR = \left\{ \sum_{m \geq 3} \beta^{-m} a_{m-2}; (a_m)_{m \geq 1} \in \NN \right\}.
$$
For $(a_m)_{m \geq 1} \in \NN$, we define $\r((a_m)_{m \geq 1}) = \sum_{m \geq 3} \beta^{-m} a_{m-2}$. For $z \in \RR$, we say that $(a_m)_{m \geq 1} \in \NN$ is a \textit{representation} of $z$ if $z = \r((a_m)_{m \geq 1})$. Clearly, any sequence in $\NN$ starts with either $0$, $10$ or $11$. One has the following:
\begin{align*}
	\RR_0 = \beta^{-1} \RR &= \{ z \in \RR; z \text{ has a representation starting with } 0 \}, \\
	\RR_{10} = \beta^{-3} + \beta^{-2}\RR &= \{ z \in \RR; z \text{ has a representation starting with } 10 \}, \\
	\RR_{11} = \beta^{-3} + \beta^{-4} + \beta^{-3}\RR &= \{ z \in \RR; z \text{ has a representation starting with } 11 \}.
\end{align*}
These three subsets of $\RR$ are scalings, rotations and translations of $\RR$ and are disjoint except for a set of measure zero (see Figure \ref{fig:rauzy}).

Clearly $\NN$ is a subshift. As before $S$ is the shift map in $\NN$. For $z \in \RR$ and $(a_m)_{m \geq 1} \in \NN$ a representation of $z$, one has that $\r(S((a_m)_{m \geq 1})) \in \RR$ and
$$
\r(S((a_m)_{m \geq 1})) = \begin{cases}
	\beta z & \text{if } a_1 = 0 \\
	\beta (z - \beta^{-3}) & \text{if } a_1 = 1.
\end{cases}
$$
It is easy to see that the points in $\RR_{11}$ are mapped bijectively into $\RR_{10}$, that the points in $\RR_{10}$ are mapped bijectively to $\RR_0$, and that the points in $\RR_0$ are mapped bijectively to $\RR$.

\begin{figure}
	\centering
	\includegraphics[width=0.8\textwidth]{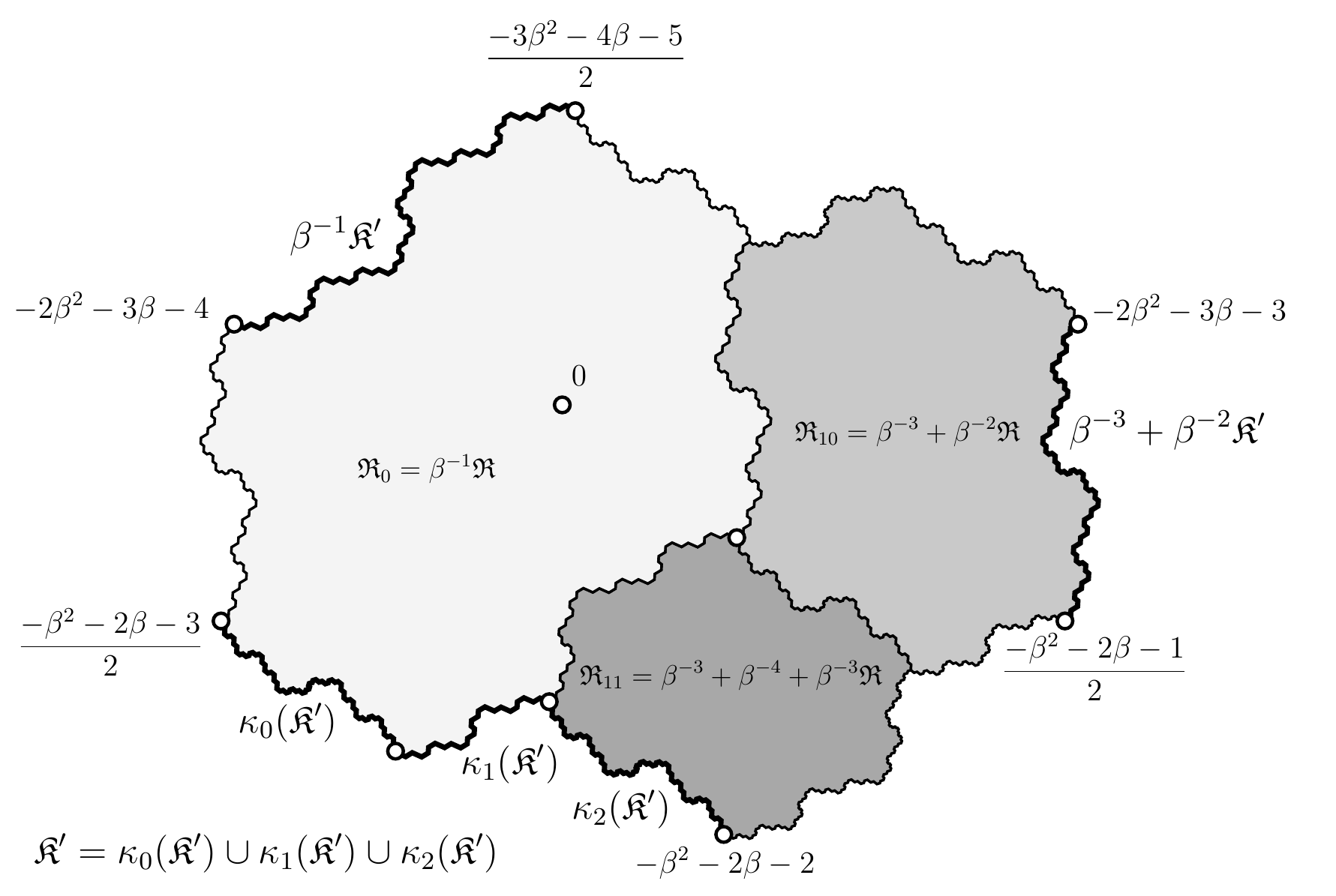}
	\caption{The tribonnacci fractal with some important features.}
	\label{fig:rauzy}
\end{figure}

Now, the parametrization of the boundary of the tribonnacci fractal is constructed as follows. Put $z_0 = \beta^{-4}/(1 - \beta^{-3})$ and for each $t \in [0, 1]$ let $(a_m)_{m \geq 1}$ be a sequence in $\{0, 1, 2\}$ such that $t = \sum_{m \geq 1} 3^{-m} a_m$. Put 
$\kappa(t) = \lim_{m \to \infty} \kappa_{a_1} \circ \kappa_{a_2} \circ \dotsb \circ \kappa_{a_m}(z_0)$, where
\begin{align*}
\kappa_0(z) &= \beta^{-4} + \beta^{-3}z, \\
\kappa_1(z) &= \beta^{-4} + \beta^{-6} + \frac{\beta^{-10}}{1 - \beta^{-3}} - \beta^{-4}z, \\
\kappa_2(z) &= \beta^{-3} + \beta^{-4} + \beta^{-3}z.
\end{align*}
It is shown in Section 4 of \cite{rauzy} that $\kappa$ is bijective and that
$$\K' = \kappa([0, 1])=\RR \cap (\RR + \beta^{-1}).
$$
The set  $\K'$ is also part of the boundary of the tribonnacci fractal. The rest of the boundary is obtained by scaling, rotating and translating $\K'$. 

We have that $\K' \subseteq \RR_0 \cup \RR_{11}$, that is, every point in $\K'$ has a representation starting with either $0$ or $11$. Furthermore, $\K' = \kappa_0(\K') \cup \kappa_1(\K') \cup \kappa_2(\K')$, which is a consequence of Lemma 4 of \cite{rauzy}, and
$$
\K' \cap \RR_0 = \kappa_0(\K') \cup \kappa_1(\K'), \quad \K' \cap \RR_{11} = \kappa_2(\K').
$$
A simple computation shows that by applying the shift map to $\K' \cap \RR_{11}$ one obtains $\beta^{-3} + \beta^{-2} \K'$ and that by applying the shift map again one obtains $\beta^{-1} \K'$, as Figure~\ref{fig:rauzy} shows.

Finally we describe some basic additional properties of $\RR$ and $\K$ that we will need:

\begin{lem}\label{lem:basicrauzy}
One has
	\begin{enumerate}[label=(\roman{*})]
		\item $\RR \cap \left( \RR + \frac{\beta^2 - 1}{2} \right) = \varnothing$;
		\item $\K' \cap \left( \RR+ \frac{\beta^2 + 2\beta + 3}{2} \right) = \varnothing$;
		\item $\K' \cap \beta^{-2} \RR = \varnothing$.
	\end{enumerate}
\end{lem}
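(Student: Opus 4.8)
The plan is to treat all three assertions as disjointness statements and to reduce each of them to a finite comparison of support functions. This reduction is legitimate because every quantity in sight lies in $\Q(\beta)$ and the extremal values $\min_{z\in\RR}\Re(\xi z)$ (and the analogous quantities for $\K'$) are limits of explicitly computable truncations with errors decaying like $|\beta|^{-n}$ — the analogue for $\RR$ of Lemmas~\ref{lem:continuation} and~\ref{lem:minimum-exponential}, coming from the self-affine decomposition $\RR=\RR_0\cup\RR_{10}\cup\RR_{11}$; alternatively one may bound $\RR$ by the enclosure $\RR\subseteq\{\sum_{m\ge 3}\beta^{-m}t_{m-2}:t_m\in[0,1]\}$, whose width in a direction $\xi\in\SS^1$ is simply $\sum_{m\ge 3}|\Re(\xi\beta^{-m})|$. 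The common elementary observation is that, for a compact $Z\subseteq\RR$ and any set $Y\subseteq\C$, one has $Z\cap Y=\varnothing$ provided that for every direction $\xi$ having a point of $Z$ as a minimizer of $\Re(\xi\,\cdot)$ on $\RR$, the value $\min_{z\in\RR}\Re(\xi z)$ falls outside the interval $[\min_{y\in Y}\Re(\xi y),\max_{y\in Y}\Re(\xi y)]$.

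For part (i), where $Z=\RR$ and $Y=\RR+\tfrac{\beta^2-1}{2}$, every direction is relevant and the criterion reduces to finding $\xi$ with $\operatorname{width}_\RR(\xi)<|\Re(\xi\,\tfrac{\beta^2-1}{2})|$; I would take $\xi=\overline{(\beta^2-1)}/|\beta^2-1|$, so the right-hand side is $\tfrac12|\beta^2-1|$, and estimate $\operatorname{width}_\RR(\xi)$ by summing a handful of terms of $\sum_{m\ge 3}|\Re(\xi\beta^{-m})|$ together with a geometric tail bound, sharpening it if needed by carrying the functional equation for the support function a few more levels (equivalently, by using the ``no three consecutive $1$'s'' constraint). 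For parts (ii) and (iii) a single separating line need not have a margin, since $Y$ sits \emph{against} $\K'$ rather than cleanly across a line; here I would exploit that $\K'\subseteq\partial\RR$. Let $\mathcal{J}\subseteq\SS^1$ be the set of directions $\xi$ for which $E_\RR(\xi)$ meets $\K'$; this can be located to any required precision from the parametrization $\kappa=\lim\kappa_{a_1}\circ\cdots\circ\kappa_{a_m}$ of \cite{rauzy} and the recursion for $\min_{z\in\RR}\Re(\xi z)$. For (ii), with $Y=\RR+\tfrac{\beta^2+2\beta+3}{2}$, it then suffices to verify $\Re(\xi\,\tfrac{\beta^2+2\beta+3}{2})>0$ for every $\xi\in\mathcal{J}$, so that translating each point of $\K'$ by $-\tfrac{\beta^2+2\beta+3}{2}$ pushes it strictly past $\RR$ in its own supporting direction; for (iii), with $Y=\beta^{-2}\RR$, one checks instead that $\min_{z\in\RR}\Re(\xi z)$ lies outside $[\,|\beta|^{-2}\min_{z\in\RR}\Re(\beta_0^{-2}\xi z),\,-|\beta|^{-2}\min_{z\in\RR}\Re(-\beta_0^{-2}\xi z)\,]$ for every $\xi\in\mathcal{J}$. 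As an independent handle on (iii) one may use $\beta^{-2}\RR=\beta^{-1}\RR_0$ together with $\K'\cap\RR_0=\kappa_0(\K')\cup\kappa_1(\K')$ and $\kappa_0(\K')\subseteq\beta^{-4}+\beta^{-3}\RR=\beta^{-1}\RR_{10}$ and the description of the shift action on $\K'\cap\RR_{11}$, reducing everything to separations among a few small sub-tiles and sub-arcs sitting inside $\RR_0$.

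The obstacle I expect to be decisive is quantitative rather than structural: the inequalities underlying (i) and (ii) are genuinely tight (the pertinent width of $\RR$ is close to $\tfrac12|\beta^2-1|$, and $\tfrac12|\beta^2+2\beta+3|$ is small compared with the size of $\K'$), so the crudest enclosures will not give a safe margin and one must push the support-function recursion to enough levels, or invoke the subword constraint, to win. A secondary point of care is that $\RR_0,\RR_{10},\RR_{11}$ overlap along their boundaries, so in (ii)--(iii) the argument must be phrased in terms of supporting directions of $\RR$ along the curve $\K'$ rather than in terms of the merely measure-zero disjointness of these pieces; the remaining lower-dimensional boundary overlaps are then eliminated by one more step of the same recursion.
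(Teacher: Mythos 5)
A preliminary remark: the paper contains no proof of this lemma at all — it is stated as a fact about the tribonnacci fractal, to be read off from the structure recalled from \cite{rauzy} (admissible $\{0,1\}$-expansions, the pieces $\RR_0,\RR_{10},\RR_{11}$, the curve $\K'=\RR\cap(\RR+\beta^{-1})$) or checked directly — so there is no written argument to compare yours with, and I can only judge your proposal on its own terms. For item (i) your plan is sound: the criterion ``if the width of $\RR$ in some direction $\xi$ is smaller than $|\Re(\xi\,(\beta^2-1)/2)|$, then $\RR$ and $\RR+(\beta^2-1)/2$ are disjoint'' is a correct sufficient condition, and contrary to your pessimism the crudest enclosure already succeeds: with $\xi=\overline{(\beta^2-1)}/|\beta^2-1|$, summing $|\Re(\xi\beta^{-m})|$ for $3\le m\le 8$ and bounding the tail by $\sum_{m\ge 9}|\beta|^{-m}$ gives a width bound below $1.11$, while $\tfrac12|\beta^2-1|\approx 1.19$. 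That is a finite, certifiable computation.

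The genuine gap is in (ii) and (iii). Your ``common elementary observation'' is not a valid sufficient condition, and its use for $Z=\K'$ collapses: since $\RR$ is not convex, a point of $\K'$ need not minimize $\Re(\xi\,\cdot)$ over $\RR$ for \emph{any} direction $\xi$; only points lying on the boundary of the convex hull of $\RR$ admit a supporting line. That convex boundary is a rectifiable curve of dimension one, whereas $\K'$ is a fractal arc whose Hausdorff dimension exceeds one, so all but a thin subset of $\K'$ has no ``own supporting direction'' at all. Hence verifying $\Re(\xi w)>0$ only for the directions $\xi\in\mathcal{J}$ whose minimizers happen to meet $\K'$ controls those exceptional points and says nothing about a generic $z\in\K'$ possibly lying in $\RR+\tfrac{\beta^2+2\beta+3}{2}$ or in $\beta^{-2}\RR$; indeed your criterion can be vacuously satisfied while $Z\cap Y\neq\varnothing$ (take $Z=Y$ a single non-exposed point of $\RR$), so the implication you rely on is simply false. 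The fallback you sketch for (iii) does not repair this: it reduces the claim to further disjointness statements between sub-tiles and sub-arcs of exactly the same nature, which is the content that needs proof. A workable argument for (ii)--(iii) must exploit the digit/tiling structure of $\RR$ itself — which admissible expansions a point of $\K'$ can have, the relations among $\RR_0,\RR_{10},\RR_{11}$ and neighbouring translates established in \cite{rauzy} — rather than supporting lines of $\RR$.
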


\subsubsection{Parametrization of the boundary of fractals associated with the cubic Arnoux-Yoccoz i.e.m.}

We will use $\K = \K' - z_0$ to parametrize the boundaries of each $\F_a$ for each $a \in \A$. It is not difficult to see (after a simple computation) that $\K$ is a curve with endpoints $\kappa(0) - z_0 = 0$ and $\kappa(1) - z_0 = (-\beta^2 - 2\beta - 1)/2$. To get our parametrization we will need the following four lemmas that at the end show that the boundaries of the $\F_a$'s are Jordan curves. 

\begin{lem} \label{lem:propertiesX}
The following equalities are satisfied:
\[
		\K  = \kappa(1) - \K  \quad \text{ and } \quad \K  = \beta^{-3} \K  \cup (\beta^{-4} \K  + \beta^{-3}) \cup (\beta^{-3}\K  + \beta^{-3}).
\]
\end{lem}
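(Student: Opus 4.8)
The plan is to derive both identities in Lemma~\ref{lem:propertiesX} from the structure of $\K'$ established in the previous subsection, translating everything by $-z_0$. For the symmetry $\K = \kappa(1) - \K$, recall that $\K = \K' - z_0$ and that $\kappa\colon[0,1]\to\K'$ is a bijection. The key input is that there is a natural orientation-reversing involution on the parametrization: from the relation $\K' = \kappa_0(\K') \cup \kappa_1(\K') \cup \kappa_2(\K')$ together with the explicit affine maps $\kappa_0,\kappa_1,\kappa_2$, one checks that $\kappa(1-t)$ and $\kappa(1) + \kappa(0) - \kappa(t)$ satisfy the same functional equation under the triadic subdivision of $[0,1]$, hence agree by uniqueness of the limit defining $\kappa$. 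Concretely, first I would verify the base case $\kappa(1) - \kappa(t)$ at $t = 0, 1$ and $t=1/2$, then show that conjugating the subdivision recursion $\kappa(t/3 + j/3) = \kappa_j(\kappa(t))$ by the reflection $z \mapsto \kappa(1) - z$ permutes $\{\kappa_0, \kappa_1, \kappa_2\}$ appropriately (with $\kappa_0 \leftrightarrow \kappa_2$ and $\kappa_1$ fixed). Subtracting $z_0$ throughout and using $\kappa(1) - z_0 = (-\beta^2 - 2\beta - 1)/2 = \kappa(1)$ in the new notation gives $\K = \kappa(1) - \K$.

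For the self-similar decomposition $\K = \beta^{-3}\K \cup (\beta^{-4}\K + \beta^{-3}) \cup (\beta^{-3}\K + \beta^{-3})$, the idea is to rewrite the three maps $\kappa_0, \kappa_1, \kappa_2$ in terms of the translated curve. We have $\K' = \kappa_0(\K') \cup \kappa_1(\K') \cup \kappa_2(\K')$, so $\K = \K' - z_0 = \bigl(\kappa_0(\K' ) - z_0\bigr) \cup \bigl(\kappa_1(\K') - z_0\bigr) \cup \bigl(\kappa_2(\K') - z_0\bigr)$. Substituting $\K' = \K + z_0$ into each $\kappa_j$ and using the defining value $z_0 = \beta^{-4}/(1-\beta^{-3})$, one computes: $\kappa_0(\K + z_0) - z_0 = \beta^{-4} + \beta^{-3}(\K + z_0) - z_0 = \beta^{-3}\K + (\beta^{-4} + \beta^{-3}z_0 - z_0)$, and $\beta^{-4} + \beta^{-3}z_0 - z_0 = \beta^{-4} - z_0(1-\beta^{-3}) = \beta^{-4} - \beta^{-4} = 0$, so this piece is exactly $\beta^{-3}\K$. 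The computation for $\kappa_2$ is identical except for the extra additive $\beta^{-3}$, giving $\beta^{-3}\K + \beta^{-3}$. For $\kappa_1(\K + z_0) - z_0$, which has slope $-\beta^{-4}$, I would combine the computation of the constant with the previously established symmetry $\K = \kappa(1) - \K$ (or directly $-\K = \K - \kappa(1)$) to convert $-\beta^{-4}\K$ into $\beta^{-4}\K$ up to a translation, and then verify that the resulting constant equals $\beta^{-3}$; this uses the explicit constant $\beta^{-4} + \beta^{-6} + \beta^{-10}/(1-\beta^{-3})$ appearing in $\kappa_1$ and the minimal polynomial relation $\alpha + \alpha^2 + \alpha^3 = 1$, equivalently $\beta^3 + \beta^2 + \beta = 1$.

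The main obstacle I anticipate is the bookkeeping in the $\kappa_1$ term: one must carefully track the orientation reversal (the factor $-\beta^{-4}$ versus the desired $+\beta^{-4}$), use the symmetry of $\K$ to flip it, and then simplify a somewhat intricate algebraic expression in $\beta$ down to the clean constant $\beta^{-3}$, all while respecting $\beta^3 = 1 - \beta - \beta^2$. The other steps are essentially direct substitution. Throughout I would freely use the relation $\beta^3 + \beta^2 + \beta - 1 = 0$ (so $\beta^{-3} = 1 + \beta^{-1} + \beta^{-2}$ after dividing, etc.) to put all constants in a canonical form and confirm the claimed identities.
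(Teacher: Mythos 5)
Your proposal is correct. For the self-similar decomposition it is essentially the paper's own argument: subtract $z_0$ from $\K' = \kappa_0(\K')\cup\kappa_1(\K')\cup\kappa_2(\K')$, expand each affine piece (the $\kappa_0$ and $\kappa_2$ terms come out immediately as $\beta^{-3}\K$ and $\beta^{-3}\K+\beta^{-3}$ because $z_0$ is the fixed point of $\kappa_0$), and use the first identity to convert the orientation-reversing $\kappa_1$ piece $-\beta^{-4}\K+\mathrm{const}$ into $\beta^{-4}\K+\beta^{-3}$; the constant indeed reduces to $\beta^{-3}$ modulo $\beta^3+\beta^2+\beta-1$, as you anticipate. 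The only genuine departure is in the first identity: the paper simply quotes the symmetry of $\K'$ from Lemma 4 of \cite{rauzy}, whereas you re-derive it by conjugating the triadic subdivision system by the reflection $z\mapsto\kappa(0)+\kappa(1)-z$ and checking that this swaps $\kappa_0\leftrightarrow\kappa_2$ and fixes $\kappa_1$ (the latter check is again an identity that holds modulo the minimal polynomial of $\beta$). This is sound and makes the lemma self-contained at the cost of two extra algebraic verifications, while the paper's citation is shorter. Note also that your reading of the statement's $\kappa(1)$ as the translated endpoint $\kappa(1)-z_0=(-\beta^2-2\beta-1)/2$ is the intended one: it is the form in which the symmetry is applied later, for instance in the proof of Lemma \ref{lem:rauzy3}.
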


\begin{proof}
The first equality comes from the fact that $\K'$ is symmetric, as shown in Lemma 4 of \cite{rauzy}. Furthermore, since $\K' = \kappa_0(\K') \cup \kappa_1(\K') \cup \kappa_2(\K')$, 
by subtracting $z_0$ from both sides, expanding and using the first equality, we get the desired result.
\end{proof}

The following two lemmas will serve to prove that the parts of the boundaries in the Arnoux-Yoccoz fractals coming from the tribonacci fractal intersect in a unique point (see Figure \ref{fig:AY}).

\begin{lem}\label{lem:rauzy3}
	We have that $\K \cap \left( \beta \K + \frac{\beta^2 + 1}{2} \right) = \K \cap \left( \beta^2 \K + \frac{-\beta^2 + 1}{2} \right) = \{0\}$.
\end{lem}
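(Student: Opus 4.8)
The plan is to reduce both intersection identities to properties of the tribonnacci fractal $\RR$ and the curve $\K'$ established in Section \ref{sec:rauzy}, in particular the disjointness statements of Lemma \ref{lem:basicrauzy}. First I would rewrite everything in terms of $\K' = \K + z_0$, where $z_0 = \beta^{-4}/(1-\beta^{-3})$. Since $0 = \kappa(0) - z_0$ and $\kappa(1) - z_0 = (-\beta^2 - 2\beta - 1)/2$ are the two endpoints of $\K$, the point $0$ lies in $\K$, and one checks by a direct (but routine) substitution of $z_0$ that $0$ also lies in $\beta\K + \frac{\beta^2+1}{2}$ and in $\beta^2\K + \frac{-\beta^2+1}{2}$; concretely, $0 \in \beta\K + \frac{\beta^2+1}{2}$ is equivalent to $-\frac{\beta^2+1}{2\beta} \in \K$, i.e. to $z_0 - \frac{\beta^2+1}{2\beta} \in \K'$, which should follow by identifying this point as an endpoint $\kappa(1)$ or $\kappa(0)$ of $\K'$ up to the symmetry $\K' = \kappa(1) - \K'$ from Lemma \ref{lem:propertiesX}. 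So $\{0\}$ is contained in both intersections, and the content of the lemma is the reverse inclusion.

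For the reverse inclusion I would argue that any point $w$ in $\K \cap (\beta\K + \frac{\beta^2+1}{2})$ satisfies $w + z_0 \in \K'$ and $\beta^{-1}(w + z_0) + (\text{an explicit constant}) \in \K'$, where the constant is obtained by translating the relation $w - \frac{\beta^2+1}{2} \in \beta\K$ back through $z_0$. Since $\K' \subseteq \RR$, this forces $w + z_0$ to lie in $\RR \cap (\RR + c)$ for an explicit $c \in \Q[\beta]$; the goal is to match $c$ (possibly after applying the shift map $\r \circ S$, which acts on $\RR$ as $z \mapsto \beta z$ or $z \mapsto \beta(z - \beta^{-3})$ depending on the leading digit) with one of the excluded translations $\frac{\beta^2-1}{2}$ of Lemma \ref{lem:basicrauzy}(i), or with $\frac{\beta^2+2\beta+3}{2}$ of part (ii), or with $\beta^{-2}\RR$ of part (iii). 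Because $\K'$ is covered by $\RR_0 \cup \RR_{11}$ (every point of $\K'$ has a representation starting with $0$ or $11$), one can case-split on the leading digits of the representations of $w + z_0$ and of the translated point, apply the appropriate power of the shift, and reduce the translation constant to one of the three forbidden ones, yielding a contradiction unless the only surviving point is $w = 0$. The second identity, with $\beta^2\K + \frac{-\beta^2+1}{2}$, is handled the same way after noting that applying the shift twice to the relevant subset of $\K'$ produces $\beta^{-1}\K'$ (as recorded just before Lemma \ref{lem:basicrauzy}), so that the two applications of $S$ needed for the $\beta^2$ factor are exactly available.

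The main obstacle I anticipate is bookkeeping: keeping track of which subset ($\RR_0$, $\RR_{10}$, $\RR_{11}$) each of the two points $w+z_0$ and its $\beta$- (or $\beta^2$-)rescaled partner falls into, and verifying that after the corresponding shifts the translation constant is \emph{exactly} $\frac{\beta^2-1}{2}$, $\frac{\beta^2+2\beta+3}{2}$, or lands in $\beta^{-2}\RR$, rather than merely something similar. All of these verifications are finite computations in $\Q[\beta]$ using $\beta^3 + \beta^2 + \beta - 1 = 0$, but there are several cases and the constants must be tracked carefully through each composition $\kappa_0, \kappa_1, \kappa_2$ and through $z \mapsto \beta z$, $z \mapsto \beta(z-\beta^{-3})$. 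Once the algebra is pinned down, the conclusion that the intersections reduce to $\{0\}$ is immediate from Lemma \ref{lem:basicrauzy}.
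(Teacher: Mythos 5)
There is a genuine gap. Your plan for the reverse inclusion is to translate everything into $\K'$, case-split on the leading digits of the representations, and then match the resulting translation constant exactly with one of the forbidden configurations of Lemma \ref{lem:basicrauzy}, ``yielding a contradiction unless the only surviving point is $w=0$.'' But such a matching cannot by itself isolate the point $0$: the sets $K_1 = \K \cap \left( \beta \K + \frac{\beta^2+1}{2} \right)$ and $K_2 = \K \cap \left( \beta^2 \K + \frac{-\beta^2+1}{2} \right)$ are \emph{nonempty} (they contain $0$), so no disjointness statement of the kind in Lemma \ref{lem:basicrauzy} can apply to all of their points, and your digit analysis will not end in a contradiction in every case. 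What the digit analysis actually delivers is weaker: in the paper's proof, item (i) of Lemma \ref{lem:basicrauzy} only rules out a leading $0$ (for $K_1$), and items (ii)--(iii) only rule out the leading blocks $01$ and $00$ (for $K_2$), forcing the representation to begin with $110$. That surviving case is not forbidden and produces no contradiction; instead it produces a structural relation between the two sets: if $z \in K_1$ then $-\beta^2 z \in K_2$, and if $z \in K_2$ then $-\beta z \in K_1$.

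The missing idea is the dynamical step that converts this relation into the conclusion $z=0$: iterating the two implications shows that $\pm\beta^n z$ lies in $K_1 \cup K_2$ for infinitely many $n \geq 0$, and since $K_1 \cup K_2$ is bounded while $|\beta|>1$, this is impossible unless $z=0$. Without this expansion-plus-boundedness argument (or some substitute that singles out $0$), your case analysis terminates in the $110$ case with nothing left to contradict, and the proof does not close. The parts of your proposal that do work are the easy inclusion $0 \in K_1 \cap K_2$ (indeed $-\frac{\beta^2+1}{2\beta} = \frac{-\beta^2-2\beta-1}{2}$ by $\beta^3+\beta^2+\beta-1=0$, which is the second endpoint of $\K$) and the translation bookkeeping through $z_0$ and the shift map, which is exactly the computation the paper uses to establish the two implications; but the lemma's conclusion requires the additional iteration argument.
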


\begin{proof}
Recall that $z_0 = \beta^{-4}/(1 - \beta^{-3}) = \frac{-\beta^2 - 2\beta - 3}{2}$ and $\K'= \RR \cap (\RR + \beta^{-1})$. Set $K_1 = \K \cap \left( \beta \K + \frac{\beta^2 + 1}{2} \right)$ and $K_2 = \K \cap \left( \beta^2 \K + \frac{-\beta^2 + 1}{2} \right)$.
	
(i) Let $z \in K_1$. We will prove that $-\beta^2 z \in K_2$. By definition, we have that $z = \zeta - z_0 = \beta (\zeta' - z_0) + \frac{\beta^2 + 1}{2}$, where $\zeta, \zeta' \in \K'$. Therefore,
$$
\zeta = \beta \zeta' - \beta z_0 + z_0 + \frac{\beta^2 + 1}{2} = \beta \zeta' + \frac{\beta^2 - 1}{2}.
$$
From the discussion in Section \ref{sec:rauzy} $\zeta'$ has a representation starting with either $0$ or $11$. 
Let $a_1, a_2, a_3$ be the first three letters of such representation and consider the point $\zeta'' = \beta( \zeta' - \beta^{-3}a_1 ) \in \RR$, that is, the point obtained by shifting the representation of $\zeta'$. We get that $\zeta = \zeta'' + \beta^{-2} a_1 + \frac{\beta^2 - 1}{2}$, where $a_1 \in \{0, 1\}$.
		
By Lemma \ref{lem:basicrauzy} item (i) we have that $\RR \cap \left(\RR + \frac{\beta^2 - 1}{2} \right) = \varnothing$, so $a_1 \not = 0$ and thus $a_1 a_2 a_3 = 110$. Then, $\zeta' \in \K' \cap \RR_{11}$, so $\zeta'' \in \beta^{-3} + \beta^{-2} \K' = \beta^{-2} \K + \frac{-\beta^2 - 2\beta - 1}{2}$ and by replacing in previous expressions the value of $a_1$ we get that $\zeta = \zeta'' + \frac{3\beta^2 + 4\beta + 3}{2}$. 

Finally, we get that $z \in \K \cap \left(\beta^{-2}\K + \frac{3 \beta^2 + 4\beta + 5}{2} \right)$, so
$$
-\beta^2 z \in \left( \frac{-\beta^2 - 2\beta - 1}{2} - \K \right) \cap (-\beta^2 \K) = K_2,
$$
		where the last equality is obtained by using Lemma \ref{lem:propertiesX} and a simple computation.
\medbreak
		
(ii) Let $z \in K_2$. We will prove that $-\beta z \in K_1$. The proof is similar to (i) so we skip some details.

By definition, we have that $z = \zeta - z_0 = \beta^2(\zeta' - z_0) + \frac{-\beta^2 + 1}{2}$, where $\zeta, \zeta' \in \K'$. Therefore,
		$$
		\zeta = \beta^2 \zeta' - \beta^2 z_0 + z_0 + \frac{-\beta^2 + 1}{2} = \beta^2 \zeta' + \frac{-\beta^2 - 2\beta - 1}{2}.
		$$
		As in the proof of (i), $\zeta'$ has a representation that begins with either $0$ or $11$. Let $a_1, a_2, a_3$ be its first three letters and $\zeta'' = \beta^2 ( \zeta' - \beta^{-3} a_1 - \beta^{-4} a_2 ) \in \RR$, that is, the point obtained by shifting the representation of $\zeta'$ twice. We have that $\zeta = \zeta'' + \beta^{-1} a_1 + \beta^{-2} a_2 + \frac{-\beta^2 - 2\beta - 1}{2}$, where $a_1, a_2 \in \{0, 1\}$.
		
		We know that $a_1 a_2 \neq 10$. If $a_1 a_2 = 01$, we would have that $\zeta = \zeta'' + \frac{\beta^2 +2\beta + 3}{2}$. By Lemma \ref{lem:basicrauzy} item (ii) we have that $\K' \cap \left( \RR + \frac{\beta^2 +2\beta + 3}{2} \right) = \varnothing$, so this cannot happen. Furthermore, by item (iii) of the same lemma we have that $\K' \cap \beta^{-2} \RR = \varnothing$, so $a_1 a_2 \neq 00$. We obtain $a_1a_2a_3 = 110$, so $\zeta = \zeta'' + \frac{3\beta^2 + 4\beta + 5}{2}$ and we deduce that $\zeta'' \in \beta^{-1} \K' = \beta^{-1} \K + \frac{-3\beta^2 - 4\beta - 5}{2}$. Finally, $z \in \K \cap \left( \beta^{-1} \K + \frac{\beta^2 + 2\beta + 3}{2} \right)$, so
		$$
			-\beta z \in \left( \frac{-\beta^2 - 2\beta - 1}{2} - \K \right) \cap (-\beta \K) = K_1,
		$$
where the last equality is obtained by using the previous lemma.
\medbreak

From (i) and (ii) we find that if $z$ belongs to either $K_1$ or $K_2$, then $\{\beta^n z,-\beta^n z\}$ meets $K_1 \cup K_2$ for infinitely many integers $n \geq 0$. Since $K_1\cup K_2$ is bounded and $|\beta|>1$, we must have that $z = 0$.
\end{proof}

\begin{lem}
\label{lem:rauzy2}
We have that $(\K-2\beta^2 - 3 \beta -4) \cap \left( \K + \frac{-3\beta^2 - 4\beta - 7}{2} \right)$ is a
unique point.  
\end{lem}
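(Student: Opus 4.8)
The plan is to translate the stated intersection into the self-intersection $\K\cap(-\K)$ and then to show this set equals $\{0\}$ by a renormalization argument in the spirit of the proof of Lemma~\ref{lem:rauzy3}.

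First, $w$ belongs to $(\K-2\beta^2-3\beta-4)\cap(\K+\tfrac{-3\beta^2-4\beta-7}{2})$ if and only if $\zeta_1:=w+2\beta^2+3\beta+4$ and $\zeta_2:=w+\tfrac{3\beta^2+4\beta+7}{2}$ both lie in $\K$, in which case $\zeta_1-\zeta_2=\tfrac{\beta^2+2\beta+1}{2}$. Thus $w\mapsto\zeta_1$ is a bijection from the stated set onto $\K\cap(\K+\tfrac{\beta^2+2\beta+1}{2})$. Writing $e:=\kappa(1)-z_0=\tfrac{-\beta^2-2\beta-1}{2}$ for the non-trivial endpoint of $\K$, we have $\tfrac{\beta^2+2\beta+1}{2}=-e$, and the reflection symmetry of $\K$ in Lemma~\ref{lem:propertiesX} (which, in the notation $e$ for that endpoint, reads $\K=e-\K$, as used in the proof of Lemma~\ref{lem:rauzy3}) gives $\K-e=-\K$. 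Hence it suffices to prove $\K\cap(-\K)=\{0\}$; the unique point of the original intersection is then the one corresponding to $\zeta_1=0$, $\zeta_2=e$, that is, $-2\beta^2-3\beta-4$.

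Since $0\in\K\cap(-\K)$, suppose for contradiction that some $z\neq0$ lies in $\K\cap(-\K)$. Using the decomposition $\K=A_0\cup A_1\cup A_2$ with $A_0=\beta^{-3}\K$, $A_1=\beta^{-4}\K+\beta^{-3}$, $A_2=\beta^{-3}\K+\beta^{-3}$ of Lemma~\ref{lem:propertiesX}, together with the negated decomposition $-\K=(-A_0)\cup(-A_1)\cup(-A_2)$, the point $z$ must lie in some $A_i\cap(-A_j)$. Solving the defining relations, one finds $A_0\cap(-A_0)=\beta^{-3}(\K\cap(-\K))$, whereas each of the eight remaining pairs $(i,j)\neq(0,0)$ yields a set that is non-empty only if (after multiplying by a suitable power of $\beta$) a condition of one of the following two shapes holds: $c\in\K+\K$ for some $c\in\{-1,-2,-2\beta\}$, or $\K\cap(-\beta(\K+c))\neq\varnothing$ for some $c\in\{1,2\}$. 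The plan is to rule out every one of these five conditions by passing to the tribonnacci representations of the relevant points of $\K'=\K+z_0$ (each of which begins with $0$ or $11$) and invoking the three disjointness statements of Lemma~\ref{lem:basicrauzy}, exactly as was done for the two constants occurring in the proof of Lemma~\ref{lem:rauzy3}. Granting this, the only possibility is $z\in A_0\cap(-A_0)$, so $\beta^3z\in\K\cap(-\K)$ with $\beta^3z\neq0$; iterating, $\beta^{3n}z\in\K\cap(-\K)$ for all $n\geq1$. Since $\K$ is bounded and $|\beta|>1$, this is impossible, so $\K\cap(-\K)=\{0\}$.

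The main obstacle is precisely this last step: establishing that the eight off-diagonal pieces $A_i\cap(-A_j)$ are empty. As in Lemma~\ref{lem:rauzy3}, this is handled by a case analysis on the leading letters of the tribonnacci representations, pushing the resulting constraints forward under the shift map (which sends $\K'\cap\RR_{11}$ to $\beta^{-3}+\beta^{-2}\K'$ and then to $\beta^{-1}\K'$) until one of the emptiness facts of Lemma~\ref{lem:basicrauzy} can be applied. The bookkeeping is somewhat more extensive than in Lemma~\ref{lem:rauzy3} because several distinct constants $c$ appear, but no genuinely new idea is required.
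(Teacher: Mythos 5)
Your reduction is correct and clean: the translation $w\mapsto \zeta_1=w+2\beta^2+3\beta+4$ is a bijection onto $\K\cap\bigl(\K+\tfrac{\beta^2+2\beta+1}{2}\bigr)$, and since $\tfrac{\beta^2+2\beta+1}{2}=-e$ with $e=\kappa(1)-z_0$ and $\K=e-\K$, the statement is indeed equivalent to $\K\cap(-\K)=\{0\}$; this also identifies the intersection point as $-2\beta^2-3\beta-4$ and settles nonemptiness, which is a nice byproduct. Your bookkeeping of the nine pieces $A_i\cap(-A_j)$ is also accurate: the eight pieces with $(i,j)\neq(0,0)$ are nonempty exactly when $-1,-2$ or $-2\beta$ lies in $\K+\K$, or $\K\cap(-\beta(\K+c))\neq\varnothing$ for $c\in\{1,2\}$. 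Note that this is a genuinely different route from the paper, which argues instead that $\K-2\beta^2-3\beta-4$ lies in $\mathop{\mathrm{int}}\RR$ up to one point (via Lemma~\ref{lem:rauzy3} after an affine change), while $\K+\tfrac{-3\beta^2-4\beta-7}{2}\subseteq\RR-1-\beta^{-1}\subseteq\C\setminus\mathop{\mathrm{int}}\RR$, so the two sets can meet in at most one point.

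However, as written your argument has a genuine gap: the entire technical content of the proof is the emptiness of those eight pieces, and you only announce a plan for it (``Granting this\ldots'', ``no genuinely new idea is required'') without carrying it out. Moreover it is not true that this follows ``exactly as in Lemma~\ref{lem:rauzy3}'' by invoking Lemma~\ref{lem:basicrauzy}: your five conditions translate (using $-\K=\K-e$ and $\K=\K'-z_0$) into intersections such as $\RR\cap\bigl(\RR+\tfrac{\beta^2+2\beta-1}{2}\bigr)$ and analogous sets with new constants, none of which are among the three disjointness statements of Lemma~\ref{lem:basicrauzy} (those concern $\tfrac{\beta^2-1}{2}$, $\tfrac{\beta^2+2\beta+3}{2}$ and $\beta^{-2}\RR$). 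Each of your five conditions would therefore require its own analysis of leading digits of tribonnacci representations, pushed forward under the shift, and very possibly additional disjointness lemmas about $\RR$ beyond those available in the paper; whether the available facts suffice is exactly what has to be checked, and nothing in your text does so. (The claims are in fact true -- a crude bound on the diameter of the attractor of the IFS in Lemma~\ref{lem:propertiesX} shows $-1,-\beta\notin\K$, so $0$ lies only in the piece $A_0\cap(-A_0)$ -- but truth is not a proof.) Until the case analysis is actually performed, the proposal does not establish the lemma.
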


\begin{proof}
We illustrate the proof in Figure \ref{fig:rauzy2}. 
Observe that $(\K-2\beta^2 - 3 \beta -4) \subseteq \RR$. 
By the previous lemma and a translation, a rotation and a scaling, $\K-2\beta^2 - 3 \beta -4$ is a subset of $\mathop{\mathrm{int}}\RR$ up to one point. 
On the other hand, $\left( \K + \frac{-3\beta^2 - 4\beta - 7}{2} \right) \subseteq \RR - 1 - \beta^{-1}$. 
We know from \cite{rauzy} that $\RR- 1 - \beta^{-1} \subseteq \C \setminus \mathop{\mathrm{int}} \RR$, then
$\left( \K + \frac{-3\beta^2 - 4\beta - 7}{2} \right)$ is a subset of $\C \setminus \RR$ up to one point.
\end{proof}

\begin{figure}
\centering
\includegraphics[scale=0.4]{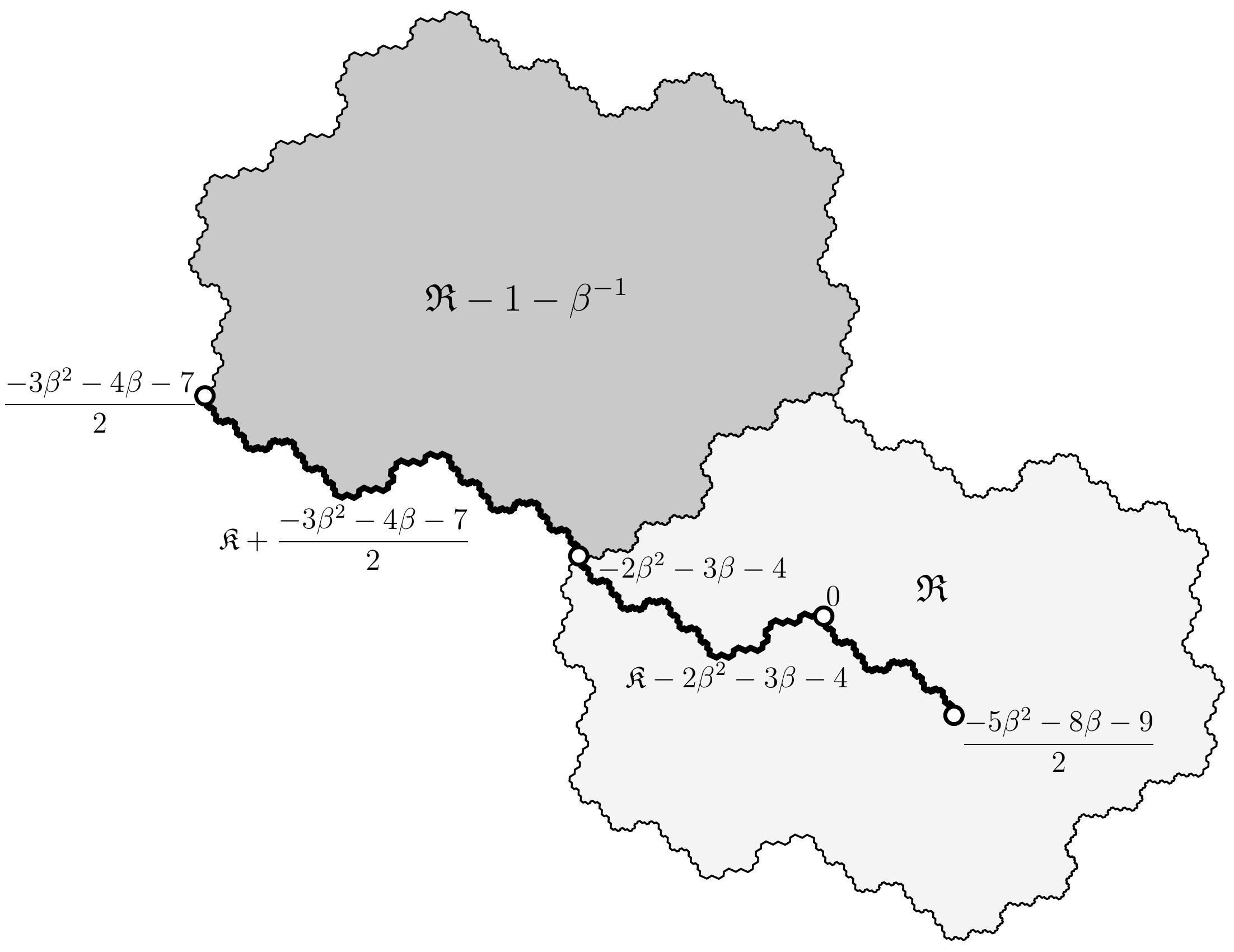}
\caption{Illustration of the proof of Lemma \ref{lem:rauzy2}.}
\label{fig:rauzy2}
\end{figure}

Now we give the parametrization of the boundaries of the Arnoux-Yoccoz fractals. Define, 
\begin{align*}
		\CC_1 &= (\beta^{-1} \K) \cup (\beta^{-2}\K + \beta^2+\beta+1) \cup \left(\K + \frac{3\beta^2 + 4\beta + 3}{2} \right) \\
		&\quad\cup \left(\beta^{-2}\K + \frac{3\beta^2 + 4\beta + 3}{2}\right) \cup \left( \beta^{-1}\K + \frac{\beta^2 + 2\beta + 1}{2} \right)  \\
		&\quad\cup \left( \K + \frac{\beta^2 + 2\beta + 1}{2} \right), \\
		\CC_2 &= \K \cup (\beta \K - \beta) \cup \left( \K + \frac{\beta^2 + 1}{2} \right) \cup \left( \beta \K + \frac{\beta^2 + 1}{2} \right), \\
		\CC_4 &= \beta \K \cup ( \beta^{-1}\K + \beta + 1 ) \cup ( \beta^{-3} \K + \beta + 1 )  \cup (\beta^{-2} \K - \beta^2 - \beta - 1) \\
		&\quad \cup \left(\beta^{-2}\K + \frac{\beta^2 + 2\beta + 3}{2} \right) \cup \left(\beta^{-1}\K + \frac{\beta^2 + 2\beta + 3}{2} \right), \\
		\CC_5 &= \beta \K \cup ( \beta^{-1}\K + \beta + 1 ) \cup \left( \beta \K + \frac{\beta^2 + 2\beta + 3}{2} \right) \cup \left(\beta^{-1}\K + \frac{\beta^2 + 2\beta + 3}{2} \right), \\
		\CC_7 &= \beta^{-1}\K \cup (\K - 1) \cup (\beta^{-2}\K - 1) \cup (\beta^{-1}\K - \beta^2 - \beta - 2) \\
		&\quad \cup \left( \beta^{-2}\K + \frac{\beta^2 + 2\beta + 1}{2} \right) \cup \left( \K + \frac{\beta^2 + 2\beta + 1}{2} \right), \\
		\CC_8 &= \beta^{-1}\K \cup (\K - 1) \cup \left( \beta^{-1} + \frac{\beta^2 + 2\beta + 1}{2} \right)  \cup \left( \K + \frac{\beta^2 + 2\beta + 1}{2} \right)
	\end{align*}
and $\CC_3 = \CC_2$, $\CC_6 = \CC_5$, $\CC_9 = \CC_8$. 

\begin{lem} For each $a \in \A$ we have that $\CC_a$ is a Jordan curve.
\end{lem}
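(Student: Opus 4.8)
The plan is to exhibit each $\CC_a$ as a cyclic concatenation of finitely many Jordan arcs that intersect only in the unavoidable way, and then to invoke the elementary topological fact that such a concatenation is homeomorphic to $\SS^1$.

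First I would record that $\K$ is a Jordan arc. Indeed, by Section~4 of \cite{rauzy} the map $\kappa\colon[0,1]\to\K'$ is a homeomorphism, so $\K=\K'-z_0$ is a homeomorphic image of $[0,1]$ with endpoints $0$ and $e:=\kappa(1)-z_0=(-\beta^2-2\beta-1)/2$. Every summand occurring in the definition of $\CC_a$ has the form $\mu\K+c$ with $\mu\in\{\beta,1,\beta^{-1},\beta^{-2},\beta^{-3}\}$ and $c\in\Q(\beta)$; since $w\mapsto\mu w+c$ is an affine bijection of $\C$, each such summand is a Jordan arc whose endpoints are $c$ and $\mu e+c$.

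Next I would check the chaining. For each $a$, write the pieces of $\CC_a$ in the order listed as $J_1,\dots,J_{n_a}$ (so $n_a=6$ for $a\in\{1,4,7\}$ and $n_a=4$ for $a\in\{2,3,5,6,8,9\}$), compute the two endpoints of each $J_i$ using the value of $e$, the relation $\beta^3+\beta^2+\beta=1$, and the symmetry $\K=e-\K$ from Lemma~\ref{lem:propertiesX} (to rewrite the endpoints of the \emph{reflected} pieces), and verify by a direct computation that the terminal endpoint of $J_i$ equals the initial endpoint of $J_{i+1}$ for every $i$ modulo $n_a$. This shows that $\CC_a=\bigcup_i J_i$ is a closed curve obtained by concatenating the $J_i$ cyclically.

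The bulk of the work, and the expected obstacle, is to rule out spurious intersections: one must show that $J_i\cap J_j=\varnothing$ when $i,j$ are not cyclically adjacent, and that $J_i\cap J_{i+1}$ is exactly the common endpoint found above. Most of these reduce to disjointness of affine copies of the tribonnacci fractal $\RR$: each $J_i$ lies in some set $\nu\RR+d$, and the required separations follow from the self-similar partition $\RR=\RR_0\cup\RR_{10}\cup\RR_{11}$, the inclusions $\K'\subseteq\RR_0\cup\RR_{11}$ and $\K'\cap\beta^{-2}\RR=\varnothing$, and the three separation statements of Lemma~\ref{lem:basicrauzy}. The genuinely delicate adjacencies — where two pieces really do touch — are precisely those covered by Lemma~\ref{lem:rauzy3} (a pair of boundary pieces meeting in a single point which, after the appropriate affine change of coordinates, is the point $\{0\}$ produced by that lemma) and, for $\CC_4$ and $\CC_7$, by Lemma~\ref{lem:rauzy2} (the unique intersection point of the two \emph{long} arcs of $\beta\K$-type and $\beta^{-1}\K$-type). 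Carrying this out case by case for $a\in\{1,2,4,5,7,8\}$ — the cases $a\in\{3,6,9\}$ being identical since $\CC_3=\CC_2$, $\CC_6=\CC_5$, $\CC_9=\CC_8$ — establishes the required intersection pattern, and since a compact set that is a cyclic union of $n$ Jordan arcs with consecutive ones sharing exactly one endpoint and non-consecutive ones disjoint is homeomorphic to $\SS^1$, each $\CC_a$ is a Jordan curve. I expect the only substantial difficulty here to be the bookkeeping of matching each pair of pieces to the correct copy of $\RR$ and the correct lemma, the hard analytic content being already absorbed into Lemmas~\ref{lem:rauzy3} and~\ref{lem:rauzy2}.
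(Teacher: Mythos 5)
Your proposal is correct and follows essentially the same route as the paper: decompose $\CC_a$ into affine copies of the arc $\K$, show that consecutive pieces meet in exactly one point by transferring to the boundary of the tribonnacci fractal via the results of \cite{rauzy} and by invoking Lemmas~\ref{lem:rauzy3} and~\ref{lem:rauzy2} for the delicate adjacencies, and conclude that $\CC_a$ is a Jordan curve. The only difference is that you spell out the endpoint matching and the disjointness of non-adjacent arcs, which the paper leaves implicit (it reduces the lemma to single-point intersections of consecutive segments, guided by Figure~\ref{fig:AY}), so your outline is, if anything, slightly more careful on those bookkeeping points.
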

\begin{proof}
	
In the definition of each $\CC_a$, the terms between unions correspond to the segments in the boundary of $\F_a$ shown in Figure \ref{fig:AY} in the clockwise order, starting at $0$. So it is enough to see that the intersection of two consecutive segments is a single point. Indeed, using the results for the tribonnacci fractal from \cite{rauzy} we get that the intersection of most of the pairs of contiguous segments have only one point. The conclusion of the lemma follows from the two previous lemmas.

For example, consider the segments $\beta^{-1} \K$ and $\beta^{-2} \K + \beta^2 + \beta + 1$, which are part of $\CC_1$. By translating by $-\beta^{-1} z_0$ and replacing $\K$ by $\K' - z_0$, we get that these segments in intersect in a single point if and only if $\beta^{-1} \K'$ and $\beta^{-2}\K' + \beta^{-2} + \beta^{-1}$ do. These two segments are part of the boundary the tribonacci fractal, which is a Jordan curve. Another example are the segments $\K + \frac{\beta^2 + 2\beta + 1}{2}$ and $\beta^{-1} \K$, which are also part of $\CC_1$. We can amplify by $\beta$ to get that these segments intersect in a single point if and only if $\K$ and $\beta \K + \frac{\beta^2 + 1}{2}$ do, which is implied by Lemma \ref{lem:rauzy3}.
\end{proof}


\begin{figure}[ht!]
\centering
\includegraphics[scale=0.26]{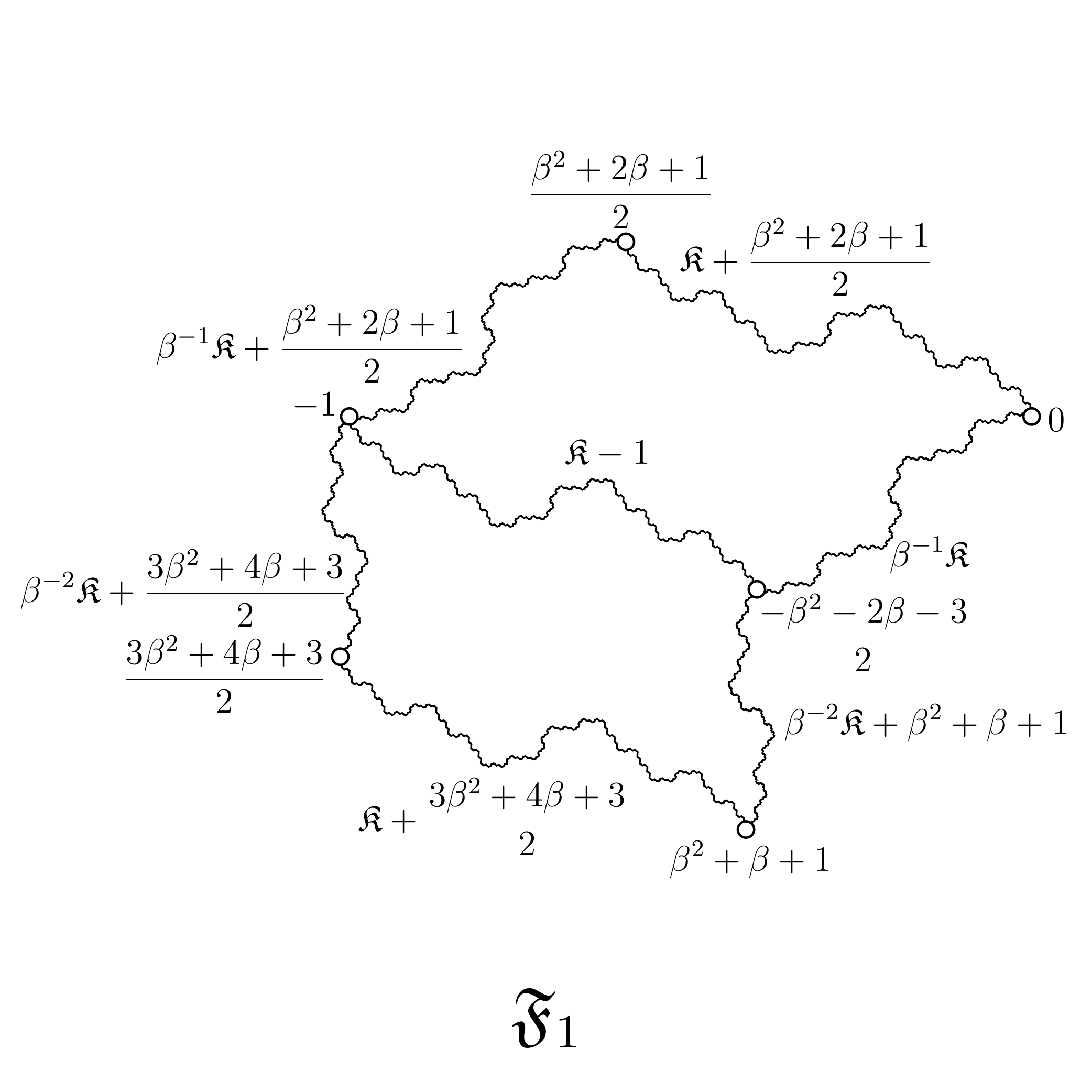} \includegraphics[scale=0.26]{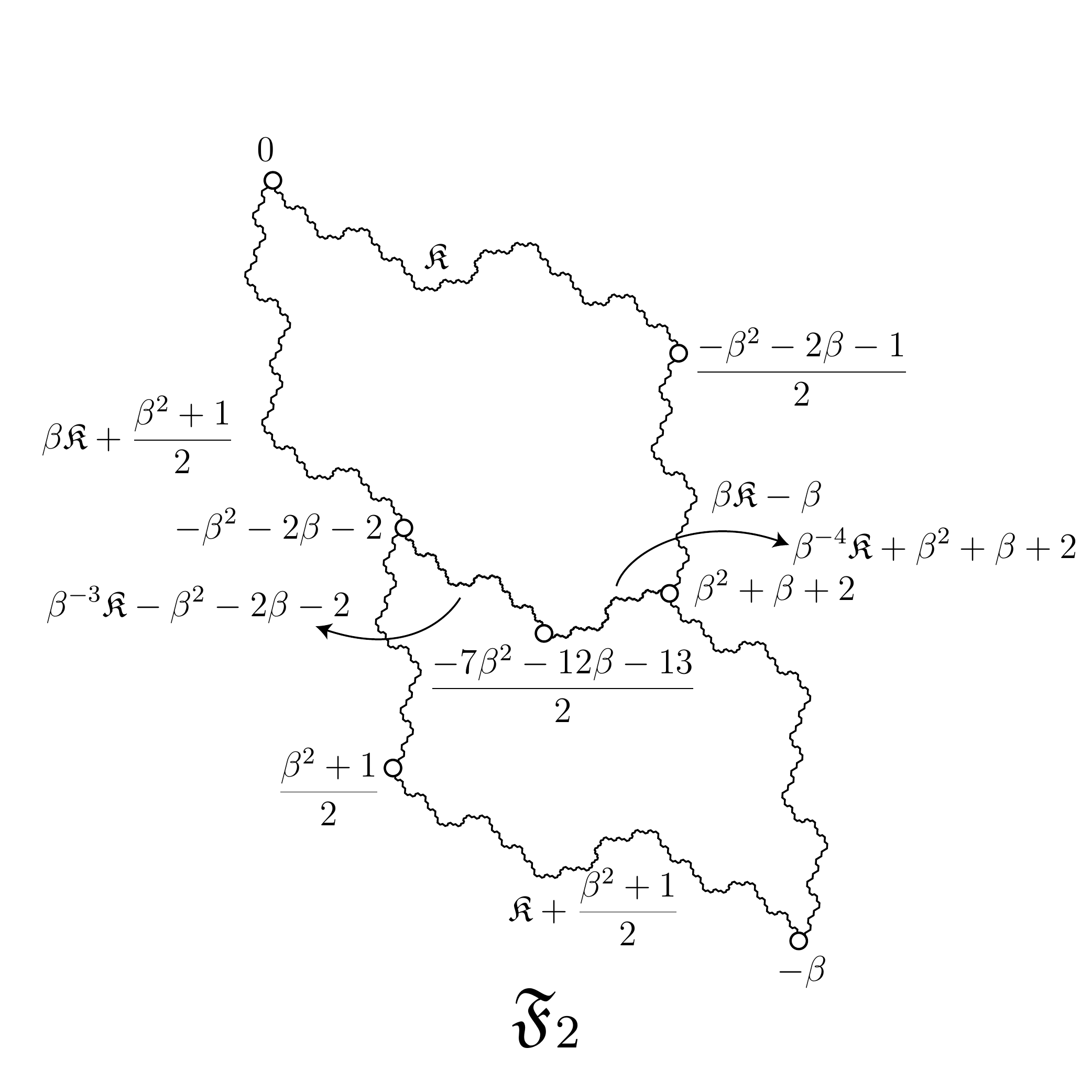}
\includegraphics[scale=0.26]{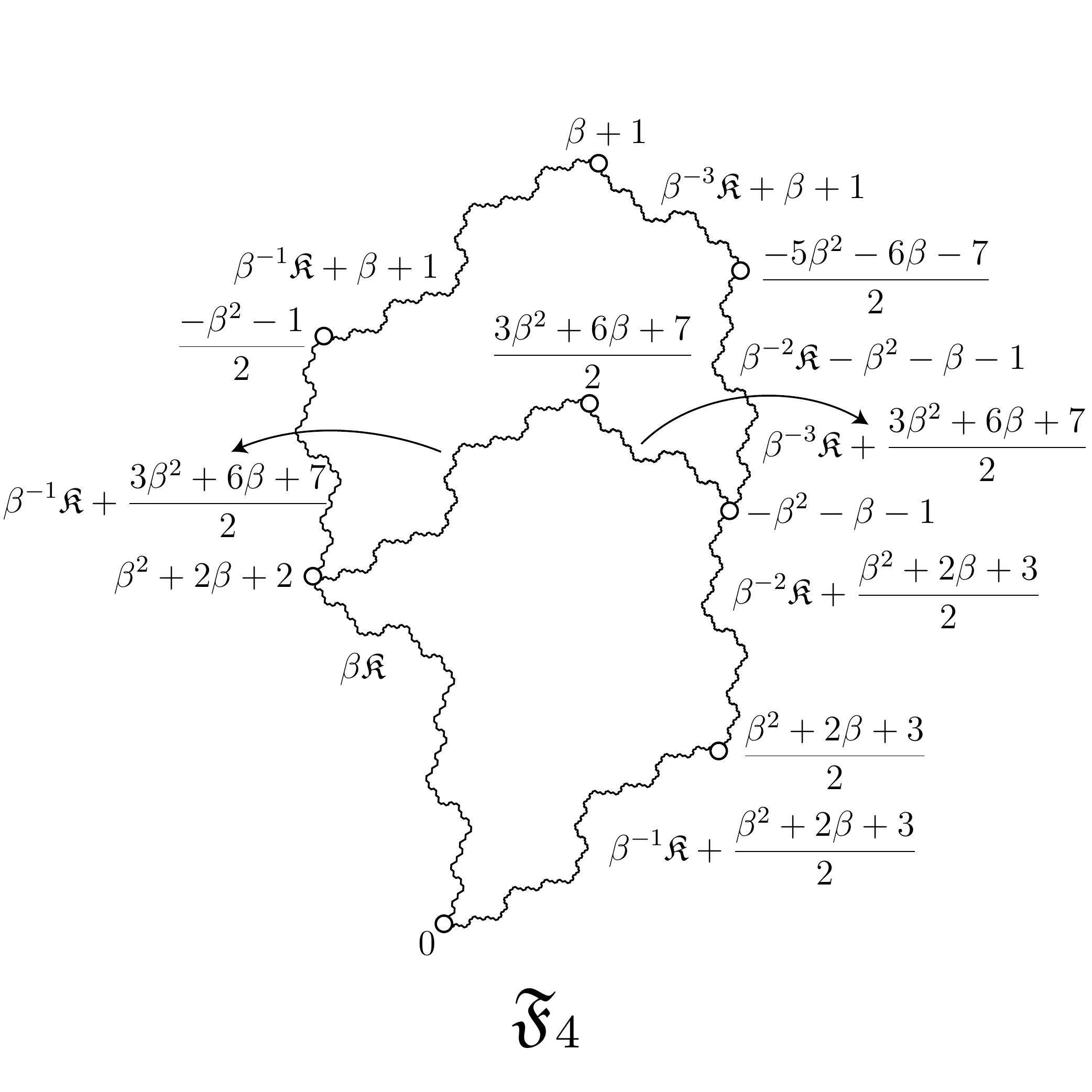} \includegraphics[scale=0.26]{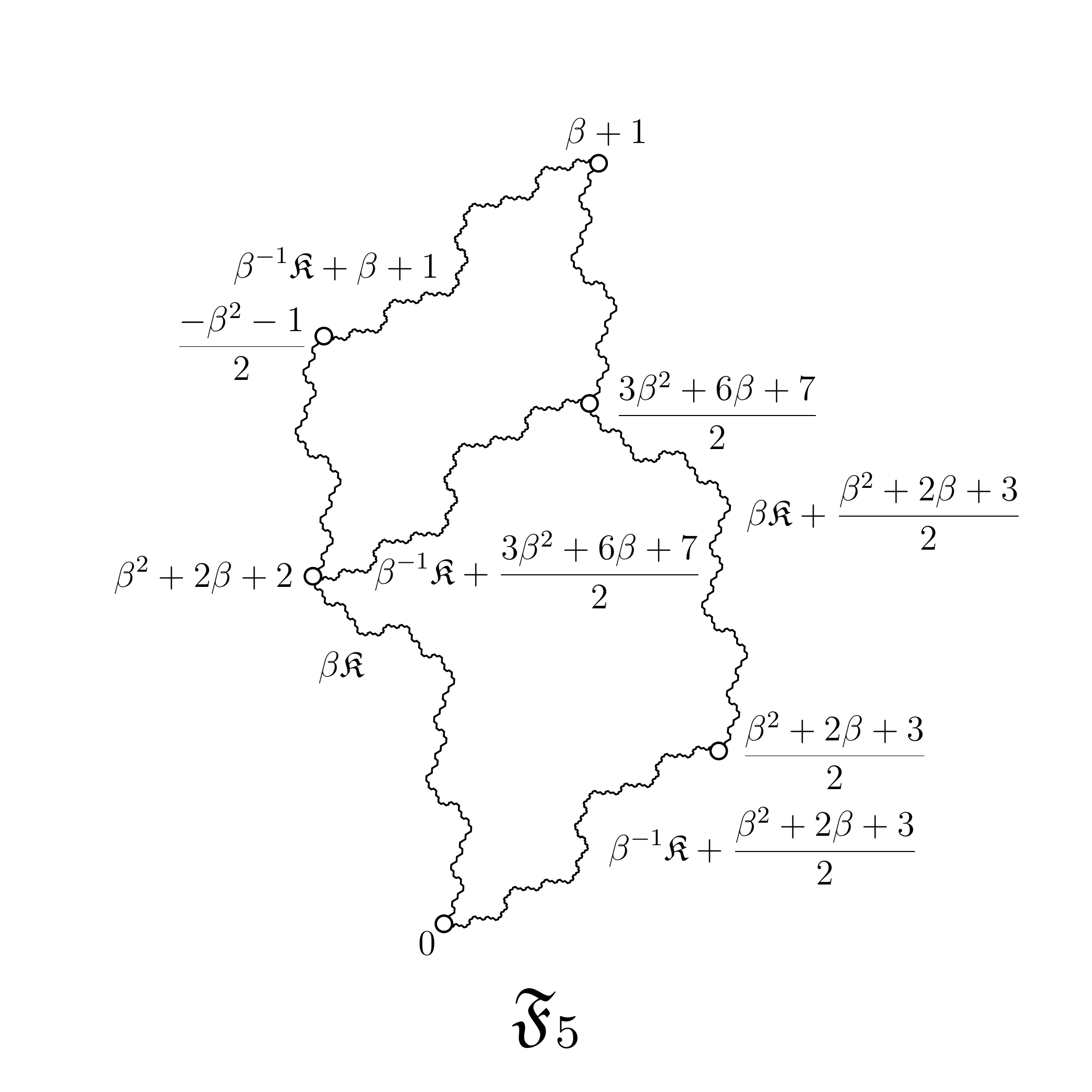}
\includegraphics[scale=0.26]{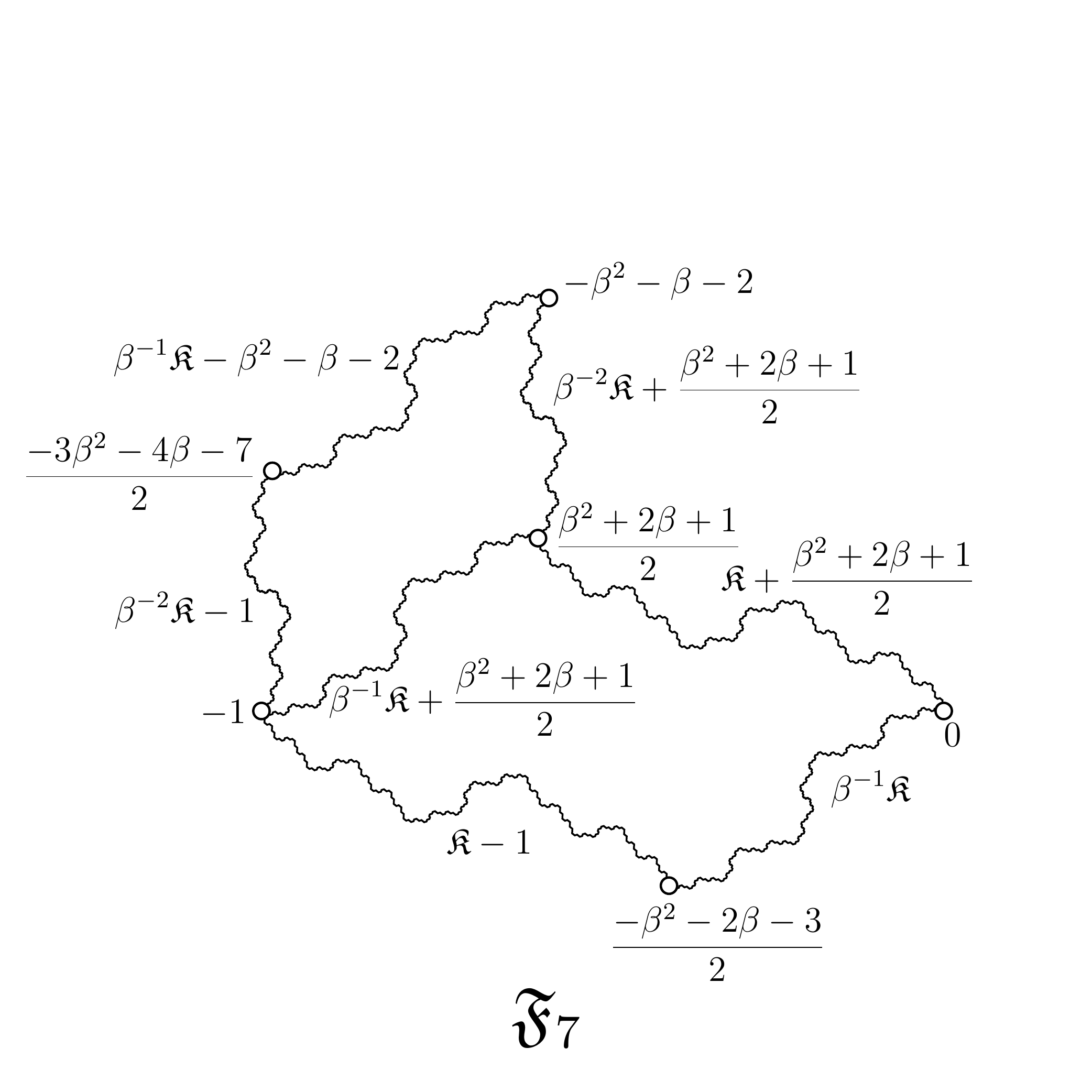} \includegraphics[scale=0.26]{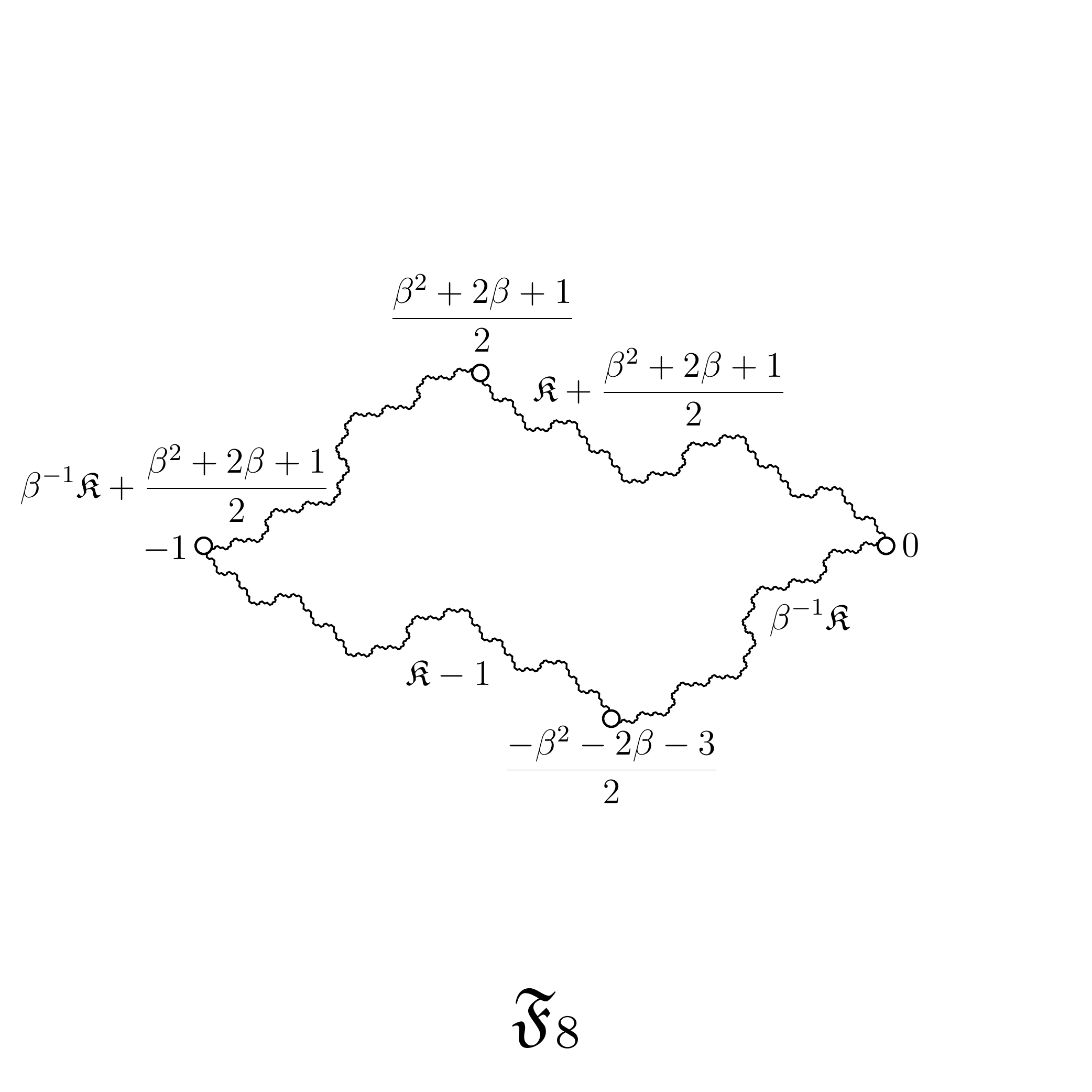}
\caption{Description of Arnoux-Yoccoz fractals and their features.}
\label{fig:AY}
\end{figure}


Now we are ready to prove that the $\CC_a$'s are parametrizations of the boundaries of the Arnoux-Yoccoz fractals. 

\begin{lem} \label{lem:disjoint-interiors}
For each $a \in \A$ we have that $\F_a$ is the closure of the interior (in the Jordan sense) of $\CC_a$. Furthermore, if $\sigma(a)$ can be written as $\sigma(a) = bc$ with $b, c \in \A$ then $\F_b \cap \mathop{\mathrm{int}} ( \gamma(b) + \F_c) = \varnothing$ and $\mathop{\mathrm{int}}  \F_b \cap ( \gamma(b) + \F_c) = \varnothing$.
\end{lem}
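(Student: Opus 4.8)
The plan is to identify the tuple $(\F_a)_{a\in\A}$ with the tuple of closed Jordan domains of the curves $\CC_a$ by a graph‑directed self‑affine fixed‑point argument. Recall from Definition~\ref{def:a,(pcs)} and the remark following it that, writing $\sigma(a)=bc$ with $b,c\in\A$, one has $\beta\F_a=\F_b\cup(\gamma(b)+\F_c)$, while $\sigma(a)=b$ gives $\beta\F_a=\F_b$. These are exactly the equations of a graph‑directed iterated function system with maps $z\mapsto\beta^{-1}z$ and $z\mapsto\beta^{-1}(z+\gamma(b))$, all contractions since $|\beta|>1$, whose solution by nonempty compact sets is unique. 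Let $D_a$ be the closure of the Jordan interior of $\CC_a$, which exists by the previous lemma. It is then enough to show that $(D_a)_{a\in\A}$ solves the same system, i.e.\ $\beta D_a=D_b\cup(\gamma(b)+D_c)$ when $\sigma(a)=bc$ and $\beta D_8=D_2$, $\beta D_9=D_3$; from this $\F_a=D_a$ for all $a$, which is the first assertion.

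The first step is purely algebraic bookkeeping. Multiplying the defining expression for $\CC_a$ by $\beta$ and repeatedly using $\beta^3=1-\beta-\beta^2$ (so $\beta^3+\beta^2+\beta=1$), each of the four or six pieces of $\beta\CC_a$ becomes a similarity copy of $\K$. Using the explicit eigenvector $\gamma$, so that $\gamma(b)\in\{-\beta,\ -\beta^2-\beta-1,\ \beta^2+\beta+1\}$ for the relevant first letters $b$, one checks case by case over $a\in\{1,\dots,7\}$ that the pieces of $\beta\CC_a$ are exactly the pieces of $\CC_b$ together with those of $\gamma(b)+\CC_c$, with exactly one piece $J_a$ (itself a similarity copy of $\K$) occurring in both $\CC_b$ and $\gamma(b)+\CC_c$. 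For instance, for $a=1$ (where $b=3$, $c=5$, $\CC_3=\CC_2$, $\gamma(3)=-\beta$) one gets that $\beta\CC_1$ has the six pieces $\K$, $\beta^{-1}\K+1$, $\beta\K+\tfrac{\beta^2+3}{2}$, $\beta^{-1}\K+\tfrac{\beta^2+3}{2}$, $\K+\tfrac{\beta^2+1}{2}$, $\beta\K+\tfrac{\beta^2+1}{2}$, while $\CC_2$ and $-\beta+\CC_5$ share exactly the piece $\beta\K-\beta$ and between them account for precisely those six. The single‑letter cases reduce likewise, giving $\beta\CC_8=\CC_2$ and $\beta\CC_9=\CC_3$.

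The second, and I expect main, step is topological: upgrading the piecewise identity ``$\beta\CC_a$ equals the union of $\CC_b$ and $\gamma(b)+\CC_c$ with the relative interior of $J_a$ deleted'' to the set identity $\beta D_a=D_b\cup(\gamma(b)+D_c)$ together with $\mathop{\mathrm{int}}(D_b)\cap\mathop{\mathrm{int}}(\gamma(b)+D_c)=\varnothing$. The tool is the elementary planar fact that if two Jordan curves meet in exactly one arc $J$, if deleting the relative interior of $J$ from their union yields a Jordan curve $\CC''$, and if the two Jordan domains lie on opposite sides of $J$, then $J$ is a crosscut of the domain of $\CC''$ which splits it into the two given domains, so these have disjoint interiors and union equal to the closed domain of $\CC''$. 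Two points need genuine work here. First, that $\CC_b\cap(\gamma(b)+\CC_c)$ really equals the single arc $J_a$ and nothing more: this is verified by the same method as in the previous lemma, via Lemma~\ref{lem:basicrauzy}, Lemma~\ref{lem:rauzy3}, Lemma~\ref{lem:rauzy2} and the structural description of $\RR$ and $\K'$ in \cite{rauzy}. Second, the ``opposite sides'' hypothesis: I would secure it by establishing $\F_a=D_a$ a priori — showing $\CC_a\subseteq\F_a$ (each $\K$‑piece lies inside the relevant fractal, using the relation of the A‑Y fractals to the tribonacci fractal from \cite{geometricalmodels}) so that $D_a\subseteq\F_a$, and that the overlaps $\F_b\cap(\gamma(b)+\F_c)$ are Lebesgue‑null (inherited from the tribonacci subdivision of \cite{rauzy}), whence the area vectors $(\mathrm{area}\,\F_a)_a$ and $(\mathrm{area}\,D_a)_a$ are both positive eigenvectors of $M$ for $|\beta|^2=\alpha^{-1}$, hence proportional; with $D_a\subseteq\F_a$ this forces equality of areas and therefore $\F_a=D_a$, which rules out the ``same side'' configuration.

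Finally, granting $\F_a=\overline{\mathop{\mathrm{int}}\F_a}$ (immediate from $\F_a=D_a$) and $\mathop{\mathrm{int}}(\F_b)\cap\mathop{\mathrm{int}}(\gamma(b)+\F_c)=\varnothing$, the two displayed statements follow formally: if some $x$ lay in $\F_b\cap\mathop{\mathrm{int}}(\gamma(b)+\F_c)$, a neighbourhood of $x$ would lie in $\mathop{\mathrm{int}}(\gamma(b)+\F_c)$, but $x\in\F_b=\overline{\mathop{\mathrm{int}}\F_b}$ forces that neighbourhood to meet $\mathop{\mathrm{int}}\F_b$, contradicting disjointness of the interiors; the assertion $\mathop{\mathrm{int}}\F_b\cap(\gamma(b)+\F_c)=\varnothing$ is symmetric. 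For $a\in\{8,9\}$ there is nothing to prove in the ``Furthermore'' part, and $\F_8=D_8$, $\F_9=D_9$ follow from the single‑letter relations $\beta\F_8=\F_2=D_2=\beta D_8$ and similarly for $9$. The crux of the whole argument, and the step I would budget the most care for, is the topological gluing together with the verification that the two subfractals meet in a single arc and on opposite sides of it.
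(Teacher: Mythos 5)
Your overall route---compute the pieces of $\beta\CC_a$ (equivalently $\beta^{-1}\CC_a$), match them against the pieces of $\CC_b$ and $\gamma(b)+\CC_c$ up to one shared arc, and then invoke uniqueness of the nonempty compact solution of the graph-directed system \eqref{eq:aygifs} to identify $\F_a$ with the closed Jordan domain $D_a$ of $\CC_a$---is exactly the paper's route (this is the content of \eqref{eq:boundaries} and the discussion around \eqref{eq:aygifs}), and your formal derivation of the two displayed statements from $\F_a=\overline{\mathop{\mathrm{int}}\F_a}$ together with disjointness of the interiors is fine. You are also right, and more explicit than the paper, that the delicate point is the topological gluing: one must know that the two subdomains lie on opposite sides of the shared arc, so that they tile the big Jordan domain rather than being nested or overlapping.

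However, the device you propose to secure that point---establishing $\F_a=D_a$ ``a priori'' via an area/Perron--Frobenius argument---does not work as stated. First, it is circular: $(\mathrm{area}\,D_a)_{a\in\A}$ is an eigenvector of $M$ for $|\beta|^2=\alpha^{-1}$ only if the identities $\beta D_a=D_b\cup(\gamma(b)+D_c)$ hold with Lebesgue-null overlaps, which is precisely what the crosscut argument was introduced to prove; likewise the null-overlap claim for $\F_b\cap(\gamma(b)+\F_c)$ cannot simply be ``inherited'' from \cite{rauzy}---it is essentially the structure being established in this lemma. Second, $\CC_a\subseteq\F_a$ does not imply $D_a\subseteq\F_a$: a compact set can contain a Jordan curve without containing any of its Jordan interior (the curve itself is an example), and nothing proved up to this point excludes such behaviour for $\F_a$. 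Third, even granting both eigenvector claims, Perron--Frobenius proportionality only yields $\mathrm{area}(D_a)=c\,\mathrm{area}(\F_a)$ for a single constant $c\le 1$; nothing forces $c=1$, and even $c=1$ together with $D_a\subseteq\F_a$ would not give $\F_a=D_a$ unless one already knew that $\F_a$ is the closure of its interior, which is again part of the statement being proved. The paper avoids all of this by reading the relative position directly off the explicit parametrization: the shared arc (e.g.\ $\K-1$ in the case $a=1$) lies inside the Jordan domain of $\CC_a$ and the two subdomains leave it on opposite sides---see the containments of the extra arcs displayed right after \eqref{eq:aygifs} and Figure \ref{fig:AY}. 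Replacing your area argument by this direct verification, which is of the same computational nature as your step 1 and as the single-arc intersection checks via Lemmas \ref{lem:basicrauzy}, \ref{lem:rauzy3} and \ref{lem:rauzy2}, yields a complete proof.
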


\begin{proof}
We will make the identifications $3 \sim 2$, $6 \sim 5$, $9 \sim 8$. By definition of $\CC_a$,
{\small\begin{align*}
		\beta^{-1} \CC_1 &= (\beta^{-2} \K) \cup (\beta^{-3}\K + \beta^2+2\beta+2) \cup \left(\beta^{-1} \K + \frac{3\beta^2 + 6\beta + 7}{2} \right)\\
		&\quad\cup \left(\beta^{-3}\K + \frac{3\beta^2 + 6\beta + 7}{2}\right) \cup \left( \beta^{-2}\K + \frac{\beta^2 + 2\beta + 3}{2} \right) \\
		&\quad\cup \left(\beta^{-1} \K + \frac{\beta^2 + 2\beta + 3}{2} \right), \\
		\beta^{-1} \CC_2 &= \beta^{-1} \K \cup (\K - 1) \cup \left(\beta^{-1} \K + \frac{\beta^2 + 2\beta + 1}{2} \right) \cup \left( \K + \frac{\beta^2 + 2\beta + 1}{2} \right), \\
		\beta^{-1} \CC_4 &= \K \cup ( \beta^{-2}\K + \beta^2 + \beta + 2 ) \cup ( \beta^{-4} \K + \beta^2 + \beta + 2 ) \\
		&\quad\cup (\beta^{-3} \K -\beta^2 - 2\beta - 2) \cup \left(\beta^{-3}\K + \frac{3\beta^2 + 4\beta + 5}{2} \right) \\
		&\quad\cup \left(\beta^{-2}\K + \frac{3\beta + 4\beta + 5}{2} \right), \\
		\beta^{-1} \CC_5 &= \K \cup ( \beta^{-2}\K + \beta^2 + \beta + 2) \cup \left( \K + \frac{3\beta^2 + 4\beta + 5}{2} \right) \\
		&\quad\cup \left(\beta^{-2}\K + \frac{3\beta^2 + 4\beta + 5}{2} \right), \\
		\beta^{-1} \CC_7 &= \beta^{-2}\K \cup (\beta^{-1} \K - \beta^2 - \beta - 1) \cup (\beta^{-3}\K - \beta^2 - \beta - 1) \\
		&\quad\cup (\beta^{-2}\K - 2\beta^2 - 3\beta - 3) \cup \left( \beta^{-3}\K + \frac{\beta^2 + 2\beta + 3}{2} \right) \\
		&\quad\cup \left(\beta^{-1} \K + \frac{\beta^2 + 2\beta + 3}{2} \right), \\
		\beta^{-1} \CC_8 &= \beta^{-2}\K \cup (\beta^{-1} \K - \beta^2 - \beta - 1) \cup \left( \beta^{-2} + \frac{\beta^2 + 2\beta + 3}{2} \right)  \\
		&\quad\cup \left(\beta^{-1} \K + \frac{\beta^2 + 2\beta + 3}{2} \right).
	\end{align*}
}

	By using Lemma \ref{lem:propertiesX} when necessary, we find that
{\small
	\begin{equation}\label{eq:boundaries}
	\begin{split}
		\beta^{-1} \CC_2 \cup \beta^{-1}(-\beta + \CC_5 ) &= \CC_1 \cup (\K - 1), \\
		\beta^{-1} \CC_4 \cup \beta^{-1} (-\beta^{-1} + \CC_5) &= \CC_2 \cup ( \beta^{-4} \K + \beta^2 + \beta + 2 ) \cup (\beta^{-3} \K -\beta^2 - 2\beta - 2), \\
		\beta^{-1} \CC_1 \cup \beta^{-1} (\beta^{-1} + \CC_7) &= \CC_4 \cup \left(\beta^{-1} \K + \frac{3\beta^2 + 6\beta + 7}{2} \right) \cup \left(\beta^{-3}\K + \frac{3\beta^2 + 6\beta + 7}{2}\right),  \\
		\beta^{-1} \CC_1 \cup \beta^{-1} (\beta^{-1} + \CC_8) &= \CC_5 \cup \left(\beta^{-1} \K + \frac{3\beta^2 + 6\beta + 7}{2} \right), \\
		\beta^{-1} \CC_2 \cup \beta^{-1} (-\beta + \CC_8) &= \CC_7 \cup \left(\beta^{-1} \K + \frac{\beta^2 + 2\beta + 1}{2} \right), \\
		\beta^{-1} \CC_2 &= \CC_8. 
	\end{split}
	\end{equation}
}

We have that the following equations stated in (6-1) of \cite{geometricalmodels} produce a unique solution for the given $\gamma$:
\begin{equation}\label{eq:aygifs}
\arraycolsep=1.5pt
	\begin{array}{llllll}
		\F_1 & = & \beta^{-1} \F_2 \cup \beta^{-1}( -\beta +  \F_5), &\qquad \F_2 & = & \beta^{-1} \F_4 \cup \beta^{-1}( -\beta^{-1} +  \F_5), \\
		\F_4 & = & \beta^{-1} \F_1 \cup \beta^{-1}( \beta^{-1} +  \F_7), &\qquad \F_5 & = & \beta^{-1} \F_1 \cup \beta^{-1}( \beta^{-1} +  \F_8), \\
		\F_7 & = & \beta^{-1} \F_2 \cup \beta^{-1}( -\beta +  \F_8), &\qquad \F_8 & = & \beta^{-1} \F_2 .
	\end{array}
\end{equation}
Finally, let $\F_a'$ be the closure of the Jordan interior of $\CC_a$. We have to show that $(\F_a')_{a \in \A}$ satisfies \eqref{eq:aygifs}. We have that (see Figure \ref{fig:AY})
\begin{align*}
&\K - 1 \subseteq \F_1',\qquad ( \beta^{-4} \K + \beta^2 + \beta + 2 ) \cup (\beta^{-3} \K -\beta^2 - 2\beta - 2) \subseteq \F_2' \\
&\left(\beta^{-1} \K + \frac{3\beta^2 + 6\beta + 7}{2} \right) \cup \left(\beta^{-3}\K + \frac{3\beta^2 + 6\beta + 7}{2}\right) \subseteq \F_4',\\
&\left(\beta^{-1} \K + \frac{3\beta^2 + 6\beta + 7}{2} \right) \subseteq \F_5', \qquad \left(\beta^{-1} \K + \frac{\beta^2 + 2\beta + 1}{2} \right) \subseteq \F_7'.
\end{align*}
So, by \eqref{eq:boundaries}, we conclude that $(\F_a')_{a \in \A}$ satisfies \eqref{eq:aygifs}, which proves that $\F_a = \F_a'$ for each $a \in \A$.

The second part of the follows directly from the parametrization (see Figure~\ref{fig:AY}).
\end{proof}
It is straightforward from the previous lemma that: 

\begin{cor}
$\partial \F_a = \CC_a$ for each $a \in \A$.
\end{cor}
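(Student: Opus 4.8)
The plan is to derive this corollary immediately from Lemma~\ref{lem:disjoint-interiors} together with the fact, established just above it, that each $\CC_a$ is a Jordan curve. First I would fix $a \in \A$ and invoke the Jordan curve theorem: since $\CC_a$ is homeomorphic to $\SS^1$, the complement $\C \setminus \CC_a$ splits as a disjoint union of two nonempty connected open sets, a bounded one $U_a$ (this is the \emph{Jordan interior} of $\CC_a$, i.e.\ $\Int \CC_a$ in the sense used in the statement of Lemma~\ref{lem:disjoint-interiors}) and an unbounded one $V_a$, and both satisfy $\partial U_a = \partial V_a = \CC_a$.

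Next I would record the elementary topological fact that a Jordan curve is nowhere dense in the plane: being the common boundary of the two domains $U_a$ and $V_a$, the compact set $\CC_a$ contains no open subset of $\C$, so $\Int(\overline{U_a}) = U_a$. Consequently $\overline{U_a} = U_a \cup \CC_a$ is a closed topological disk and
\[
\partial\,\overline{U_a} \;=\; \overline{U_a} \setminus \Int(\overline{U_a}) \;=\; (U_a \cup \CC_a) \setminus U_a \;=\; \CC_a.
\]

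Finally, Lemma~\ref{lem:disjoint-interiors} asserts precisely that $\F_a$ equals the closure of the Jordan interior of $\CC_a$, that is $\F_a = \overline{U_a}$. Combining this with the previous display yields $\partial \F_a = \partial\,\overline{U_a} = \CC_a$, which is the claim. I do not expect any real obstacle here: the only point requiring a moment's care is checking that the notion of ``interior in the Jordan sense'' appearing in Lemma~\ref{lem:disjoint-interiors} is indeed the bounded complementary component $U_a$ — the standard convention — which is exactly what makes the two equalities above match up.
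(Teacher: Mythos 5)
Your argument is correct and is exactly the route the paper intends: it deduces the corollary directly from Lemma~\ref{lem:disjoint-interiors} (that $\F_a$ is the closure of the Jordan interior of the Jordan curve $\CC_a$), with the paper simply calling this ``straightforward'' while you supply the routine Jordan-curve-theorem details ($\partial U_a=\partial V_a=\CC_a$, hence $\Int(\overline{U_a})=U_a$ and $\partial\overline{U_a}=\CC_a$). No gaps.
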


\subsubsection{Unique representation property for the A-Y i.e.m.}

We finally prove that every extreme point of each fractal $\F_a$ has a unique representation.

\begin{lem}
For any $a \in \A$, extreme points in the boundary of $\F_a$ have a unique representation. That is, $T$ has the unique representation property for $\beta$.
\end{lem}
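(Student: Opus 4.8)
The plan is to reduce the statement, via the self‑affine structure of the fractals $\F_a$ and the Continuation Property, to checking a finite explicit list of points.

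\emph{Reduction to the seam.} Suppose $z=\f_a(x)=\f_a(x')$ is an extreme point of $\F_a$ for some direction $\tau$, with $x\neq x'$ in $\S_a$, and let $n\geq 1$ be the first index at which the triples of $x$ and $x'$ differ. Since the first $n-1$ triples coincide, applying $\f$ and using $\f_a(x)=\sum_{m}\beta^{-m}\gamma(p_m^x)$ gives $w:=\f_{a'}(S^{n-1}(x))=\f_{a'}(S^{n-1}(x'))$, where $a'$ is determined by $S^{n-1}(x)\in\S_{a'}$ (so $a'=a$ when $n=1$). Iterating Lemma~\ref{lem:continuation} shows $w\in E_{a'}(\beta_0^{-(n-1)}\tau)$, so $w$ is an extreme point of $\F_{a'}$. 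Moreover $S^{n-1}(x)$ and $S^{n-1}(x')$ start with two distinct decompositions of $\sigma(a')$; since $|\sigma(b)|\leq 2$ for every $b\in\A$ in the A‑Y substitution, this forces $|\sigma(a')|=2$, say $\sigma(a')=bc$ with $b\neq c$, and the two decompositions are exactly $(\varepsilon,b,c)$ and $(b,c,\varepsilon)$. Hence $w\in\F_{a',(\varepsilon,b,c)}\cap\F_{a',(b,c,\varepsilon)}=\beta^{-1}\bigl(\F_b\cap(\gamma(b)+\F_c)\bigr)$. It therefore suffices to show that for every $a'$ with $\sigma(a')=bc$, no extreme point $w$ of $\F_{a'}$ has $\beta w\in\F_b\cap(\gamma(b)+\F_c)$.

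\emph{Reduction to the seam endpoints.} By Lemma~\ref{lem:disjoint-interiors}, $\F_b\cap(\gamma(b)+\F_c)\subseteq\partial\F_b\cap\partial(\gamma(b)+\F_c)$, and $\F_b,\gamma(b)+\F_c$ are topological disks with disjoint interiors whose union $\beta\F_{a'}$ is again a topological disk; hence the overlap is a Jordan arc — the \emph{seam} — whose relative interior lies in $\mathop{\mathrm{int}}(\beta\F_{a'})$ and whose two endpoints lie on $\beta\,\partial\F_{a'}=\beta\CC_{a'}$. Since every extreme point of $\F_{a'}$ lies on $\CC_{a'}$, the point $\beta w$ above must be one of the (at most two) endpoints of the seam. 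These endpoints are read off directly from the boundary identities \eqref{eq:boundaries}: for instance, for $a'=1$ the seam is $\K-1$, with endpoints $-1$ and $z_0$, and the other six cases are analogous, producing finitely many explicit points.

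\emph{The finite check.} Finally one checks, for each seam endpoint $u$, that $\beta^{-1}u$ is not an extreme point of $\F_{a'}$; by the computation underlying Lemma~\ref{lem:continuation} this is equivalent to the non‑existence of a single direction $\xi\in\SS^1$ with $u\in E_b(\xi)$ and $u-\gamma(b)\in E_c(\xi)$ simultaneously, i.e.\ to the tiles $\F_b$ and $\gamma(b)+\F_c$ admitting no common supporting line at the seam endpoint where they meet. I would carry this out using the explicit $\K$‑parametrizations of the $\partial\F_a$'s together with the intersection lemmas for the tribonacci fractal (Lemmas~\ref{lem:basicrauzy}, \ref{lem:rauzy3}, \ref{lem:rauzy2}), which already pin down exactly how consecutive boundary arcs meet. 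The main obstacle is precisely this last step: since fractal boundaries carry no classical tangent lines, ruling out a common supporting line at each seam endpoint has to be argued through the self‑similar structure of $\K$ — showing that each such point sits in a ``dent'' of $\F_{a'}$, i.e.\ in $\mathop{\mathrm{int}}(\mathrm{conv}\,\F_{a'})$ rather than on $\partial(\mathrm{conv}\,\F_{a'})$ — rather than by an elementary convexity estimate.
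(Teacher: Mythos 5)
Your two reduction steps are essentially the paper's: you shift with Lemma \ref{lem:continuation} until the two representations disagree in the first coordinate, note that for the A-Y substitution this forces $\sigma(a')=bc$ and places the (still extreme) point in $\beta^{-1}\F_b\cap\beta^{-1}(\gamma(b)+\F_c)$, and then use Lemma \ref{lem:disjoint-interiors} together with the boundary parametrization and the identities \eqref{eq:boundaries} to confine it to the finitely many explicit ``seam endpoint'' candidates (two per letter). Up to a minor bookkeeping slip about whether the seam is taken at the scale of $\F_{a'}$ or of $\beta\F_{a'}=\F_b\cup(\gamma(b)+\F_c)$, this is the same localization the paper performs. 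The genuine gap is your last step. You propose to exclude each candidate by showing the two tiles admit no common supporting line there, i.e.\ that the point sits in the interior of $\mathrm{conv}(\F_{a'})$, and you concede that you do not know how to carry this out; so the proof is incomplete exactly where the real content lies, and the supporting-line route is indeed delicate, since nothing in the lemmas on $\K$ controls the convex hull of a fractal boundary near a prescribed point.

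The idea you are missing is that no convexity or tangency argument is needed: the exclusion is purely symbolic. For each of the finitely many candidate points one writes down an explicit \emph{eventually periodic} element of $\S_{a'}$ representing it -- these can be read off directly from the substitution; for instance, for $a'=1$ the candidate $-1$ is represented by the sequence with $(p_1,c_1,s_1)=(3,5,\varepsilon)$ and all later prefixes empty, and the other endpoint $\tfrac{-\beta^2-2\beta-3}{2}$ by a pre-periodic sequence of period three. Lemma \ref{lem:minimal-not-periodic}, already proved via the density of $(\beta_0^{-kq})_{k\geq 0}$ in $\SS^1$, states that an eventually periodic element of $\S_{a'}$ never produces an extreme point; hence none of the candidate points can be the extreme point you started from, and the contradiction closes the proof. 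This is how the paper argues, and it replaces your problematic ``dent'' analysis by a finite list of routine symbolic verifications, one per seam endpoint.
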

\begin{proof} 
The proof is by contradiction. We prove that extreme points in $\F_a$ with more than one representation have an eventually periodic representation. This together with Lemma \ref{lem:minimal-not-periodic} gives a contradiction. 

Let $z \in \partial\F_a$ be an extreme point with two representations. By shifting and using Lemma \ref{lem:continuation} we can assume that the first letters in these representations are different. This implies that 
$\sigma(a)$ cannot be a single letter, so we get that $\sigma(a)= bc$ for some $b, c \in \A$. Then $z \in \beta^{-1}\F_b \cap \beta^{-1}( \gamma(b) + \F_c)$ and by Lemma \ref{lem:disjoint-interiors} 
$$z \in \partial \F_a \cap (\beta^{-1}\F_b \cap \beta^{-1}( \gamma(b) + \F_c))= \partial \F_a \cap \partial\beta^{-1}\F_b \cap \partial \beta^{-1}( \gamma(b) + \F_c).$$

Now, to get the desired contradiction, we prove that these points have an eventually periodic representation. There are seven cases (see Figure \ref{fig:AY} to understand the cases): 

(i) if $a = 1$, then $z = -1$ or $z = \frac{-\beta^2 - 2\beta - 3}{2}$. The point $x_1 \in \S_a$ defined by $(p_1^{x_1}, c_1^{x_1}, s_1^{x_1}) = (3, 5, \varepsilon)$ and $p_m^{x_1} = \varepsilon$ for every $m \geq 2$ is an eventually periodic representation of $z=-1$, and the point $x_2 \in \S_a$ defined by 
$$
x_2 =(\varepsilon, 3, 5)(\varepsilon, 4, 6)(1, 7, \varepsilon)(\varepsilon, 2, 9)(\varepsilon, 4, 5)(1, 7, \varepsilon)(\varepsilon, 2, 9)(\varepsilon, 4, 5)\ldots
$$
is an eventually periodic representation of $z = \frac{-\beta^2 - 2\beta - 3}{2}$;
\medbreak
			
 (ii) if $a = 2$, then $z = -\beta^2 - 2\beta - 2$ or $z = \beta^2 + \beta + 2$. The point $x_1 \in \S_a$ defined by $(p_1^{x_1}, c_1^{x_1}, s_1^{x_1}) = (4, 5, \varepsilon)$ and $p_m^{x_1} = \varepsilon$ for every $m \geq 2$
is an eventually periodic representation of $z = -\beta^2 - 2\beta - 2$, and the point $x_2 \in \S_a$ defined by $(p_1^{x_2}, c_1^{x_2}, s_1^{x_2}) = (4, 5, \varepsilon)$, $(p_2^{x_2}, c_2^{x_2}, s_2^{x_2}) = (1, 8, \varepsilon)$ and $p_m^{x_2} = \varepsilon$ for every $m \geq 3$ is an eventually periodic representation of
$z = \beta^2 + \beta + 2$;
\medbreak

(iii) if $a = 3$, then $z = -\beta^2 - 2\beta - 2$ or $z = \beta^2 + \beta + 2$. The point $x_1 \in \S_a$ defined by $(p_1^{x_1}, c_1^{x_1}, s_1^{x_1}) = (4, 6, \varepsilon)$ and $p_m^{x_1} = \varepsilon$ for every $m \geq 2$ is an eventually periodic representation of $z = -\beta^2 - 2\beta - 2$, and the point $x_2 \in \S_a$ defined by $(p_1^{x_2}, c_1^{x_2}, s_1^{x_2}) = (4, 6, \varepsilon)$, $(p_2^{x_2}, c_2^{x_2}, s_2^{x_2}) = (1, 9, \varepsilon)$ and $p_m^{x_2} = \varepsilon$ for every $m \geq 3$ is an eventually periodic representation of $z = \beta^2 + \beta + 2$;
\medbreak

 (iv) if $a = 4$, then $z = \beta^2 + 2\beta + 2$ or $z = -\beta^2 - \beta - 1$. The point
$x_1 \in \S_a$ defined by $(p_1^{x_1}, c_1^{x_1}, s_1^{x_1}) = (1, 7, \varepsilon)$ and $p_m^{x_1} = \varepsilon$ for every $m \geq 2$ is an eventually periodic representation of $z = \beta^2 + 2\beta + 2$, and the point
$x_2 \in \S_a$ defined by $(p_1^{x_2}, c_1^{x_2}, s_1^{x_2}) = (\varepsilon, 1, 7)$, $(p_2^{x_2}, c_2^{x_2}, s_2^{x_2}) = (3, 5, \varepsilon)$ and $p_m^{x_2} = \varepsilon$ for every $m \geq 3$ is an eventually periodic representation of $z = -\beta^2 - \beta - 1$;
\medbreak

(v) if $a = 5$, then $z =\beta^2 + 2\beta + 2$ or $z = \frac{3 \beta^2 + 6\beta + 7}{2}$. The point 
$x_1 \in \S_a$ defined by $(p_1^{x_1}, c_1^{x_1}, s_1^{x_1}) = (1, 8, \varepsilon)$ and $p_m^{x_1} = \varepsilon$ for every $m \geq 2$ is an eventually periodic representation of $z = \beta^2 + 2\beta + 2$, and the point 
$x_2 \in \S_a$ defined by 
$$
x_2 =(\varepsilon, 1, 8) (3, 5, \varepsilon) (\varepsilon, 1, 8) (\varepsilon, 3, 5)(4, 6, \varepsilon)(\varepsilon, 1, 9)(\varepsilon, 3, 5)(4, 6, \varepsilon)(\varepsilon, 1, 9)\ldots
$$
is an eventually periodic representation of $z = \frac{3 \beta^2 + 6\beta + 7}{2}$;
\medbreak

 (vi) if $a = 6$, then $z =\beta^2 + 2\beta + 2$ or $z = \frac{3 \beta^2 + 6\beta + 7}{2}$. 
The point $x_1 \in \S_a$ defined by $(p_1^{x_1}, c_1^{x_1}, s_1^{x_1}) = (1, 9, \varepsilon)$ and $p_m^{x_1} = \varepsilon$ for every $m \geq 2$ is an eventually periodic representation of $z = \beta^2 + 2\beta + 2$, and 
the point $x_2 \in \S_a$ defined by 
$$
x_2=(\varepsilon, 1, 9)(3, 5, \varepsilon)(\varepsilon, 1, 8)  (\varepsilon, 3, 5)(4, 6, \varepsilon)(\varepsilon, 1, 9)(\varepsilon, 3, 5)(4, 6, \varepsilon)(\varepsilon, 1, 9)\ldots
$$
is an eventually periodic representation of $z =\frac{3 \beta^2 + 6\beta + 7}{2}$;
\medbreak

 (vii) if $a = 7$, then $z = -1$ or $z = \frac{\beta^2 + 2\beta + 1}{2}$. The point $x_1 \in \S_a$  defined by $(p_1^{x_1}, c_1^{x_1}, s_1^{x_1}) = (2, 9, \varepsilon)$ and $p_m^{x_1} = \varepsilon$ for every $m \geq 2$ is an eventually periodic representation of $z = -1$, and the point $x_2 \in \S_a$ defined by 
			$$
				x_2 =(\varepsilon, 2, 9) (4, 5, \varepsilon) (\varepsilon, 1, 8) (\varepsilon, 3, 5)(4, 6, \varepsilon)(\varepsilon, 1, 9)(\varepsilon, 3, 5)(4, 6, \varepsilon)(\varepsilon, 1, 9)\ldots
			$$
is an eventually periodic representation of $z = \frac{\beta^2 + 2\beta + 1}{2}$.
\end{proof}
\textbf{Acknowledgement.} The first author is grateful to the MathAmsud grant BEX 10674/12-8. The second and third authors are grateful to the CMM-Basal grant PFB03. We thank the anonymous referee for his or her very interesting suggestions.  

\printbibliography

\end{document}